\newcommand{\lyxmathsym}[1]{\ifmmode\begingroup\def\b@ld{bold}
  \text{\ifx\math@version\b@ld\bfseries\fi#1}\endgroup\else#1\fi}
\def\RSthmtxt{theorem~}\newref{thm}{name = \RSthmtxt}}
\def\RSlemtxt{lemma~}\newref{lem}{name = \RSlemtxt}}
\numberwithin{equation}{section}
\numberwithin{figure}{section}
  \theoremstyle{plain}
  \newtheorem{lem}{\protect\lemmaname}
  \theoremstyle{plain}
  \newtheorem{prop}{\protect\propositionname}
  \theoremstyle{remark}
  \newtheorem{rem}{\protect\remarkname}
  \theoremstyle{plain}
  \newtheorem{assumption}{\protect\assumptionname}
\theoremstyle{plain}
\newtheorem{thm}{\protect\theoremname}
  \theoremstyle{remark}
  \newtheorem*{acknowledgement*}{\protect\acknowledgementname}
\def\RSthmtxt{Theorem~}
\def\RSlemtxt{Lemma~}
  \providecommand{\acknowledgementname}{Acknowledgement}
  \providecommand{\assumptionname}{Assumption}
  \providecommand{\lemmaname}{Lemma}
  \providecommand{\propositionname}{Proposition}
  \providecommand{\remarkname}{Remark}
\providecommand{\theoremname}{Theorem}
\begin{document}
\global\long\def\integral#1#2{{\displaystyle \intop_{#1}^{#2}}}

\title[CTRW as a RWRE]{continuous time random walk as a random walk in a random environment}

\author{ofer busani}
\address{bar ilan university\\ ramat gan\\ israel}
\begin{abstract}
We show that for a weakly dense subset of the domain of attraction
of a positive stable random variable of index $0<\alpha<1$($DOA\left(\alpha\right))$
the functional stable convergence is a time-changed renewal convergence
of distribution of finite mean. Applied to Continuous Time Random
Walk(CTRW) \'{a} la Montroll and Wiess we show that CTRW with renewal
times in a weakly dense set of $DOA\left(\alpha\right)$ can be realized
as random walk in a random environment. We find the quenched limit
and give a bound on the error of the approximation.
\end{abstract}

\maketitle

\section{Introduction}

Let $\left\{ W_{i}\right\} _{i=1}^{\infty}$ (abbrv. $\left\{ W_{i}\right\} $)
be a sequence of i.i.d positive r.vs s.t $\mathbb{P}\left(W_{1}>t\right)\sim t^{-\alpha}$
for $0<\alpha<1$. Then it is well known that the process $D_{t}^{n}=n^{-\frac{1}{\alpha}}\sum_{i=1}^{\left[nt\right]}W_{i},$
converges weakly in the $J_{1}$ topology to a stable subordinator,
that is
\begin{equation}
D^{n}\overset{J_{1}}{\Rightarrow}D,\label{eq:convergence to subordinator}
\end{equation}
where $\overset{J_{1}}{\Rightarrow}$ denotes weak convergence w.r.t
to $J_{1}$-Skorohod topology. The fact that $W_{1}$ typically has
big jumps carries over to the limit. This is in contrast to the SLLN
of the Renewal Theorem that says that if $\left\{ U_{i}\right\} $
is a sequence of i.i.d r.vs s.t $\mathbb{E}\left(U_{1}\right)=1$
then $T^{n}=n^{-1}\sum_{i=1}^{\left[nt\right]}U_{i}$ converges in
the Skorohod topology to the function $t\mapsto t$, i.e
\begin{equation}
T^{n}\overset{J_{1}}{\rightarrow}t,\label{eq:convergence to identity}
\end{equation}
where $\overset{J_{1}}{\rightarrow}$ denotes a.s convergence w.r.t
$J_{1}$ topology. We wish to show here that these two apparently
different convergences are closely related. That in fact, observing
the convergence in (\ref{eq:convergence to subordinator}) is essentially
observing the convergence in (\ref{eq:convergence to identity}) viewed
through a sequence of random embedding of the positive real line into
itself. One use of the convergence in (\ref{eq:convergence to subordinator})
is in the model of Continuous Time Random Walks (CTRW) introduced
in \cite{Montroll1965} by Montroll and Wiess. In the most simple
setup $\left\{ J_{i}\right\} $ and $\left\{ W_{i}\right\} $ are
two independent sequences of i.i.d r.vs. Define $\left(S_{n},T_{n}\right)=\left(\sum_{i=1}^{n}J_{i},\sum_{i=1}^{n}W_{i}\right)$,
the (uncoupled) CTRW associated with space-time jumps $\left\{ \left(J_{i},W_{i}\right)\right\} _{i=1}^{\infty}$
(abbrv. $\left(J_{i},W_{i}\right)$) is 
\[
X_{t}=\sum_{i=1}^{N_{t}}J_{i},
\]
where $N_{t}=\sup\left\{ n:T_{n}\leq t\right\} $. In order to model
the microscopic behavior of a particle with long binding times to
a substrate, one assumes that $W_{1}$ is heavy tailed, that is 
\[
\mathbb{P}\left(W_{1}>t\right)\sim t^{-\alpha},
\]
for some $0<\alpha<1$. The functional limit of $X_{t}$ for large
$t$ was first considered in  \cite{Meerschaert2004} in the mathematics
literature although earlier in the physics literature (\cite{Barkai2000}).
Limits for coupled CTRW were considered in \cite{becker2004limit},
and in \cite{Meerschaert2008} that of CTRW with space-time jumps
that are infinitely divisible. It was shown that 
\begin{equation}
n^{-1}X_{tn^{\frac{2}{\alpha}}}\overset{J_{1}}{\Rightarrow}B_{E_{t}},\label{eq:convergence of CTRW}
\end{equation}
where $B_{t}$ is a Brownian motion and $E_{t}$ is the inverse stable
subordinator independent of $B_{t}$ defined by 
\[
E_{t}=\inf\left\{ s:D_{s}>t\right\} .
\]
The process $B_{E_{t}}$, sometimes called the Fractional Kinetics
process, is a sub-diffusion in the since that it is self-similar with
exponent $\frac{\alpha}{2}$, i.e
\[
B_{E_{tc}}\sim c^{\frac{\alpha}{2}}B_{E_{t}}.
\]
Our results show that the invariance principle in (\ref{eq:convergence of CTRW})
where the limit is a Bm subordinated to an independent inverse subordinator,
is not merely a property of the limit but is the case for the CTRW
itself, even when the CTRW is coupled, i.e. when the r.v $W_{i}$
and $J_{i}$ are dependent. In fact, we show this for a larger set
of CTRWs, namely CTRW with waiting times with infinite mean with some
restriction on their Laplace Transform. A simple case is when $X_{t}$
is an uncoupled CTRW associated with the i.i.d space-time jumps $\left(J_{i},W_{i}\right)$,
where $W_{i}\in DOA\left(\alpha\right)$. Then we show that for every
$\epsilon>0$ one can construct a probability space where one can
find a sequence of i.i.d r.vs $\left(J_{i},U_{i}\right)$ where $\mathbb{E}\left(U_{1}\right)<\infty$
and an inverse subordinator (not necessarily stable) $E_{t}$, independent
of $\left\{ U_{i}\right\} $, s.t if $Y_{t}$ is the CTRW associated
with $\left(J_{i},U_{i}\right)$ then
\begin{equation}
\rho_{d_{J_{1}}}\left(Y_{E_{t}},X_{t}\right)<\epsilon,\label{eq:distance of laws}
\end{equation}
where $\rho_{d_{J_{1}}}$ is the Prohorov metric on probability distributions
metrizing the weak topology of distributions on the Skorohod space
$\mathbb{D}([0,\infty))$. This enables us to show that by enriching
the filtration of a CTRW one may realize CTRW as an annealed process
of a random walk in a random environment (RWRE). One of are two main
results (\thmref{CTRW_as_RWRE}) shows that there exists a set of
distributions $\mathcal{A}$ which is weakly dense in $DOA\left(\alpha\right)$
for which CTRW is an annealed process of RWRE. The random environment
is a random time change while the quenched process is a CTRW with
finite mean waiting times (independent of the enviornment) time-changed
by the random enviornment. The results also show that there exists
a set of distributions $\mathcal{B}\subset\mathcal{A}$ which can
be realized as another RWRE. This time the random environment is traps
in time, that is, for each time $n\in\mathbb{Z}_{+}$ one randomizes
i.i.d trappings $\tau_{n}$ from a heavy tailed distribution, the
quenched process will then be a CTRW with waiting times $\left\{ \tau_{i}U_{i}\right\} $,
where $\mathbb{E}\left(U_{1}\right)<\infty$. We also show that under
proper scaling of CTRW, the quenched process converges to an interesting
diffusion time changed by the inverse of a stable subordinator. It
shows that in the quenched limit the dynamics of the space-time jumps
$\left(J_{i},U_{i}\right)$ are translated to that of the regenerative
points of the enviornment. Our second main result (\thmref{quantitive_result})
deals with trying to bound the distance in (\ref{eq:distance of laws})
when we scale the process' $Y_{E_{t}}$ and $X_{t}$ . We give a polynomial
bound $Cn^{-c}$, however, the proof gives way to finding a better
$c$ if one only finds a good way of matching the tail of a subordinator
with that of $W_{1}$. Note that CTRW were considered in \cite{Arous2015}
as one instance of a RWRE on $\mathbb{Z}$ called \emph{a Randomly
Trapped Random Walk} (RTRW).\emph{ }However, there, the random environment
is probability measures $\left\{ \pi_{z}\left(dt\right)\right\} _{z\in\mathbb{Z}}$
on the the positive real line. Given such a random environment, one
preforms a simple random walk on $\mathbb{Z}$ with waiting times
$\left\{ W_{i}^{z}\right\} _{i\in\mathbb{Z}_{+},z\in\mathbb{Z}}$
s.t the sequence $\left\{ W_{i}^{z}\right\} _{i\in\mathbb{Z}_{+}}$
of waiting times at site $z$ is drawn independently from the the
distribution $\pi_{z}$. Reaching the site $x\in\mathbb{Z}$ for the
$i$'th time, the random walk waits $W_{i}^{x}$ before moving on
to the next site, i.e. traps are in space. In contrast, we show that
CTRWs can, at some instances (e.g. stable distribution, Mittag-Leffler
distribution), be realized as trap models where the traps are in time
rather than space. Moreover, presented as a RTRW, CTRWs are essentially
degenerate in the sense that the environment is deterministic, and
therefore the limit is completely annealed. Here we show that by considering
a larger filtration, the quenched limit retains its environment. 

\section{\label{sec:Prelimineries}Preliminaries}

Recall that a Bernstein function is a function $f:\left(0,\infty\right)\rightarrow\mathbb{R}$
that is infinitely differentiable, $f\left(s\right)\geq0$ and $\left(-1\right)^{n-1}f^{\left(n\right)}\left(s\right)\geq0$
for $n\geq1$, where $f^{\left(n\right)}$ is the n'th derivative
of $f$. A function $f$ is a Bernstein function iff $f$ is of the
form 
\begin{equation}
f\left(s\right)=a+bs+\integral 0{\infty}\left(1-e^{-sy}\right)\mu\left(dy\right),\label{eq:Psi}
\end{equation}
where $a,b\geq0$ and $\mu$ is a measure on $\left(0,\infty\right)$
s.t $\integral 0{\infty}\left(1\wedge y\right)\mu\left(dy\right)<\infty$,
one can then identify a Bernstein function with the characteristics
$\left(a,b,\mu\right)$. We shall be interested in the set
\[
\mathfrak{B}:=\left\{ f:f\text{ is an unbounded Bernstein function of characteristics }\left(0,b,\mu\right)\right\} .
\]
We denote the Laplace Transform(LT) of a positive measure $\mu$ on
$\left(0,\infty\right)$ by $\mathcal{L}\mu\left(s\right)=\integral 0{\infty}e^{-st}\mu\left(dt\right)$.
Let $\mathcal{CM}$ denote the space of completely monotone functions,
i.e., $f\in\mathcal{CM}$ iff $f:\left(0,\infty\right)\rightarrow\mathbb{R}$
and $\left(-1\right)^{n}f^{\left(n\right)}\left(s\right)\geq0$ for
$n\geq0$. Define
\[
\mathfrak{\hat{L}}:=\left\{ f\in\mathcal{CM}:f\left(0^{+}\right)=1\right\} .
\]
 Recall that $\mathfrak{\hat{L}}$ is just the set of Laplace Transforms
of probability measures on the positive real line. For $\psi\in\mathfrak{B}$
we define the mapping $\hat{\Phi}_{\psi}:\mathfrak{\mathfrak{\hat{L}}}\rightarrow\mathfrak{\mathfrak{\hat{L}}}$
by 
\[
\hat{\Phi}_{\psi}\left(f\right)\left(s\right)=f\left(\psi\left(s\right)\right).
\]
Note that the mapping is indeed into $\mathfrak{\mathfrak{\hat{L}}}$;
if $f\in\mathcal{CM}$ and $\psi$ is a Bernstein function then $f\left(\psi\left(s\right)\right)\in\mathcal{CM}$(\cite[Theorem 3.6]{schilling2012bernstein}).
We say the function $L:\mathbb{R}^{+}\rightarrow\mathbb{R}$ is slowly
varying at $\infty$ if 
\begin{equation}
\lim_{x\rightarrow\infty}\frac{L\left(\lambda x\right)}{L\left(x\right)}=1,\label{eq:SV function}
\end{equation}
for every $\lambda\in\mathbb{R}^{+}$. In fact it is enough to show
that (\ref{eq:SV function}) holds for every $\lambda\in\Lambda$,
where $\Lambda\subset\mathbb{R}$ is of positive Lebesgue measure.
Next, define 
\[
\mathcal{\mathfrak{\hat{L}}_{\psi}}:=\left\{ f\in\mathfrak{\mathfrak{\hat{L}}}:f\sim1-\psi\left(s\right)L\left(s^{-1}\right)\right\} ,
\]
where $L$ is a slowly varying function, and where $f\sim1-\psi\left(s\right)L\left(s^{-1}\right)$
means that 
\[
\lim_{s\rightarrow0}\frac{\left|f\left(s\right)-1\right|}{\psi\left(s\right)L\left(s^{-1}\right)}=1.
\]
 We also use $X\sim f$, where $X$ is a r.v and $f$ is a distribution
or a r.v, to say that $X$ is distributed according to $f$, there
should not be a confusion there. We denote by $\mathfrak{L}$ and
$\mathfrak{L}_{\psi}$ the space of measures whose Laplace transform(LT)
is in $\mathfrak{\mathfrak{\hat{L}}}$ and $\mathcal{\mathfrak{\hat{L}}_{\psi}}$
respectively, that is, the LT $\mathcal{L}$ is a bijection between
$\mathfrak{L}$ and $\mathfrak{\mathfrak{\hat{L}}}$ and between $\mathfrak{L}_{\psi}$
and $\mathcal{\mathfrak{\hat{L}}_{\psi}}$. We also define $\Phi_{\psi}:\mathfrak{L}\rightarrow\mathfrak{L}$
as $\Phi_{\psi}:=\mathcal{L}^{-1}\hat{\Phi}_{\psi}\mathcal{L}$. If
$X$ is a r.v with distribution $f$, we sometimes write $\Phi_{\psi}\left(X\right)$
instead of $\Phi_{\psi}\left(f\right)$. Finally, let $\psi_{1}$
and $\psi_{2}$ be two Bernstein functions in $\mathfrak{B}$, we
define $\hat{\mathfrak{L}}_{\psi_{1}}^{\psi_{2}}=\hat{\Phi}_{\psi_{2}}\left(\hat{\mathfrak{L}}_{\psi_{1}}\right)$
and $\mathfrak{L}_{\psi_{1}}^{\psi_{2}}=\Phi_{\psi_{2}}\left(\mathfrak{L}_{\psi_{1}}\right)$.

Recall that a positive r.v $X$ is said to be in the domain of attraction
of a stable (totally asymmetric) r.v $Y$ of index $0<\alpha<1$,
i.e. $\mathbb{E}\left(e^{-sY}\right)=e^{-s^{\alpha}}$ (abbr. $X\in DOA\left(\alpha\right)$),
if there exists a sequence of normalizing constants $a_{n}\rightarrow0$
s.t 
\[
a_{n}\sum_{i=1}^{n}X_{i}\Rightarrow Y,
\]
where $\left\{ X_{i}\right\} $ are i.i.d copies of $X$ and $\Rightarrow$
denotes weak convergence of measures. It is well known that $X\in DOA\left(\alpha\right)$
iff $\mathbb{P}\left(X>t\right)\sim L\left(t\right)t^{-\alpha}$,
where $L\left(t\right)$ is a slowly varying function. It is also
known that the sequence $a_{n}$ is regularly varying, i.e, 
\[
\lim_{n\rightarrow\infty}\frac{a_{\left[\lambda n\right]}}{a_{n}}=\lambda^{-\alpha}\qquad\lambda>0,
\]
and that 
\begin{equation}
nL\left(a_{n}^{-1}t\right)\left(a_{n}^{-1}t\right)^{-\alpha}\rightarrow\frac{t^{-\alpha}}{\Gamma\left(1-\alpha\right)}.\label{eq:the sequence a_n}
\end{equation}
For convenience we let $a_{1}=1$. Our interest in Bernstein functions
and the mappings $\Phi_{\psi}$ is in part due to the following fact:
for $0<\alpha<1$ $\mathfrak{L}_{s^{\alpha}}=DOA\left(\alpha\right)$.
Moreover, 
\begin{equation}
\mathfrak{L}_{s}=\left\{ \mu:\mu\in\mathfrak{L},\integral 0ty\mu\left(dy\right)\quad\text{is slowly varying}\right\} .\label{eq:characterization of the first layer}
\end{equation}
These are consequences of \cite[Corollary 8.1.7 and Theorem 8.3.1]{bingham1989regular}.\\
Let $f:\mathbb{R}\rightarrow\mathbb{R}$ be a right continuous function
with left limits. We denote 
\[
f_{t-}:=\lim_{\epsilon\rightarrow0^{+}}f\left(t-\epsilon\right),
\]
the left limit of $f_{t}$. If $f$ is a left continuous function
with right limits then 
\[
\left(f_{t}\right)^{+}:=\lim_{\epsilon\rightarrow0^{+}}f\left(t+\epsilon\right),
\]
the right limit of $f_{t}$. Note that whenever $g\left(t\right)$
is a continuous strictly increasing function and $f$ is right continuous
with left limits $\left(f_{g\left(t\right)-}\right)^{+}=f_{g\left(t\right)}$
(note that we first compute $f_{t-}$ and then evaluate at $g\left(t\right)$).
This may not be the case when there exists $\epsilon>0$ s.t $g\left(t-\epsilon\right)=g\left(t+\epsilon\right)$
and $f$ is not continuous at $g\left(t\right)$ . We say that $X_{t}$
is a CTRW with space-time jumps $\left\{ J_{i},W_{i}\right\} $ or
that $X_{t}$ is a CTRW associated with the space-time jumps $\left\{ J_{i},W_{i}\right\} $
, if 
\[
X_{t}=\sum_{n=1}^{\infty}J_{n}1\left(t\right)_{\left\{ y:T_{n}\leq y\right\} },
\]
where $T_{n}=\sum_{i=1}^{n}W_{i}$. We use $\mathbb{D}[0,T]$ ($\mathbb{D}[0,\infty)$)
to denote the subspace of $\mathbb{R}^{[0,T]}$ ($\mathbb{R}^{\mathbb{R}_{+}})$for
$T>0$ of c�dl�g functions, and $\overset{J_{1}[0,T]}{\sim}$ to denote
the equivalence of law of processes in the Skorohod $J_{1}$ topology
on $\mathbb{D}[0,T]$ . We shall use $\mathbb{D}$ and $\overset{J_{1}}{\sim}$
when we refer to $\mathbb{D}[0,\infty)$ and $\overset{J_{1}}{\sim}$.
We use $X_{t}^{n}\overset{J_{1}}{\Rightarrow}X_{t}$ ($X_{t}^{n}\overset{J_{1}[0,T]}{\Rightarrow}X_{t}$)
to say that the law of the process $X_{t}^{n}$ converges weakly to
that of $X_{t}$ w.r.t the $J_{1}$ topology on $\mathbb{D}$ ($\mathbb{D}[0,T]$).
Let $d$ be a metric on the set $V$ and let $\mathcal{P}\left(V\right)$
be the set of all probability measures on the Borel sets (with respect
to $d$) of $V$. Recall that a sequence of probability measures $p_{n}\in\mathcal{P}\left(V\right)$
converges weakly to $p\in\mathcal{P}\left(V\right)$ if for every
bounded continuous (with respect to $d$) function $h:V\rightarrow\mathbb{R}$
we have
\[
\int h\left(x\right)p_{n}\left(dx\right)\rightarrow\int h\left(x\right)p\left(dx\right).
\]
Recall further that the weak topology of $\mathcal{P}\left(V\right)$
is metrizable by the following metric
\[
\rho_{d}\left(p_{1},p_{2}\right)=\inf_{p_{1,2}}\inf\left\{ \epsilon:p_{1,2}\left(\left|X-Y\right|>\epsilon\right)<\epsilon\right\} ,
\]
where the infimum runs over all couplings of the r.vs $X$ and $Y$
whose distribution is given by $p_{1}$ and $p_{2}$ respectively.
For two r.vs $X$ and $Y$ we sometimes write $\rho_{d}\left(X,Y\right)$,
which should be understood as $\rho\left(p_{X},p_{Y}\right)$, where
$p_{X}$ and $p_{Y}$ are the distributions of $X$ and $Y$ respectively.
Recall that the Skorohord $J_{1}$ topology on $\mathbb{D}[0,T]$
is metrizable in the following way; a sequence $f_{t}^{n}\in\mathbb{R}^{[0,T]}$
converges in the $J_{1}$ topology to $f_{t}\in\mathbb{R}^{[0,T]}$
if there exists a sequence of homeomorphisms $\lambda_{t}^{n}:[0,T]\rightarrow[0,T]$
s.t
\[
\left\Vert f_{\lambda^{n}}^{n}-f\right\Vert \rightarrow0\quad\textrm{and}\quad\left\Vert \lambda_{t}^{n}-t\right\Vert \rightarrow0,
\]
as $n\rightarrow\infty$, where $\left\Vert \cdot\right\Vert $ is
the sup norm, that is, for $f_{t},g_{t}\in\mathbb{R}^{[0,T]}$ 
\[
\left\Vert g-f\right\Vert =\sup_{t\in[0,T]}\left|g_{t}-f_{t}\right|.
\]
Denote by $\Lambda$ the set of all homeomorphismes form $[0,T]$
to itself. One way to metrize the $J_{1}$ topology is to use the
following metric
\[
d_{J_{1}}\left(f,g\right)=\inf_{\lambda\in\Lambda}\left\{ \left\Vert g_{\lambda}-f\right\Vert \lor\left\Vert \lambda_{t}-t\right\Vert \right\} .
\]
Let $\mathbb{D}_{\uparrow\uparrow}$ be the subset in $\mathbb{D}$
whose elements are strictly increasing. If $d_{t}\in\mathbb{D}$ and
increasing, we define the generalized inverse of $d_{t}$ to be 
\[
d_{t}^{-1}=\inf\left\{ s:d_{s}>t\right\} .
\]
 Note that $d_{t}^{-1}$ is continuous iff $d_{t}$ is strictly increasing.
Define the mapping $\mathcal{H}:\mathbb{D}\times\mathbb{D}_{\uparrow\uparrow}\rightarrow\mathbb{D}$
by 
\[
\mathcal{H}\left(f_{t},d_{t}\right)=\left(f_{d_{t}^{-1}-}\right)^{+}.
\]
The results in \cite{straka2011} show that $\mathcal{H}$ is continuous
w.r.t the $J_{1}$ topology. In fact, we shall often make use of the
following result by Straka and Henry (\cite[Theorem 3.6]{straka2011}).
\\

\begin{lem}
\label{lem:Straka_and_Henry}(Straka and Henry, 2011) Suppose we have
a sequence of random space-time jumps $\left\{ J_{i}^{n},W_{i}^{n}\right\} $
and a sequence of random increasing step process $N_{t}^{n}$ s.t
\[
\left(J_{N_{t}^{n}}^{n},W_{N_{t}^{n}}^{n}\right)\overset{J_{1}}{\Rightarrow}\left(A_{t},D_{t}\right),
\]
where $D_{t}\in\mathbb{D}_{\uparrow\uparrow}$. If $X_{t}^{n}$ is
the CTRW associated with $\left\{ J_{i}^{n},W_{i}^{n}\right\} $.
Then 
\begin{equation}
X_{t}^{n}\overset{J_{1}}{\Rightarrow}\mathcal{H}\left(A_{t},D_{t}\right).\label{eq:continuity of H}
\end{equation}
\end{lem}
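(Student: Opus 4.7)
The plan is to realize the CTRW as the image of the space--time path under the operator $\mathcal{H}$ and then conclude by continuous mapping. Write $S^n_k = \sum_{i=1}^{k} J^n_i$ and $T^n_k = \sum_{i=1}^{k} W^n_i$, and interpret the subscript $N^n_t$ in the hypothesis as a reindexing of these partial sums by a (random, strictly increasing) step clock. Because time--reparameterization by a common monotone clock is absorbed into the homeomorphism defining $J_1$--convergence, I may replace $(J^n_{N^n_\cdot}, W^n_{N^n_\cdot})$ by $(S^n_\cdot, T^n_\cdot)$ without loss for our purposes.

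First, I would rewrite the CTRW in functional form. The CTRW counting process $M^n_t = \sup\{k : T^n_k \le t\}$ is, up to the convention on inversion of step functions, the generalized inverse $(T^n)^{-1}_t$ defined in the preliminaries. Using the identity $(f_{g(t)-})^+ = f_{g(t)}$ that was recorded in the preliminaries whenever the inner function is recovered at a point of increase, one checks that
\[
X^n_t \;=\; S^n_{M^n_t} \;=\; \bigl(S^n_{(T^n)^{-1}_t -}\bigr)^+ \;=\; \mathcal{H}\!\left(S^n_\cdot,\, T^n_\cdot\right).
\]
Thus the CTRW is literally $\mathcal{H}$ applied to the joint space--time path.

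Second, I would invoke the continuity of $\mathcal{H}:\mathbb{D}\times \mathbb{D}_{\uparrow\uparrow}\to\mathbb{D}$ with respect to the $J_1$ topology. This is the real content of Straka--Henry. The idea is: if $(f^n, d^n)\to (f,d)$ in $J_1\times J_1$ and $d\in \mathbb{D}_{\uparrow\uparrow}$, then $d^{-1}$ is continuous; hence the $J_1$--approximation of $d$ by $d^n$ transfers to a uniform approximation of $(d^n)^{-1}$ by $d^{-1}$ on compact intervals, and composing with the $J_1$--approximation of $f$ by $f^n$ (after suitably refining the homeomorphisms $\lambda^n$ of $[0,T]$ so that they align the jumps of $f^n$ with those of $f$ after inversion) yields $J_1$--convergence of $(f^n_{(d^n)^{-1}-})^+$ to $(f_{d^{-1}-})^+$. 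Granting this continuity, the continuous mapping theorem applied to the assumed weak convergence $(J^n_{N^n_\cdot}, W^n_{N^n_\cdot})\overset{J_1}{\Rightarrow}(A,D)$ (whose limit lies almost surely in $\mathbb{D}\times \mathbb{D}_{\uparrow\uparrow}$) yields
\[
X^n \;=\; \mathcal{H}(S^n, T^n) \;\overset{J_1}{\Rightarrow}\; \mathcal{H}(A,D),
\]
which is the claim.

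The hard part is unambiguously Step 2, the continuity of $\mathcal{H}$. On all of $\mathbb{D}\times \mathbb{D}$ this map fails to be continuous: if $d$ has a flat stretch, its inverse has a jump, and evaluating $f$ at a jump point of $d^{-1}$ is an unstable operation under $J_1$--perturbation of either coordinate. The assumption $D\in\mathbb{D}_{\uparrow\uparrow}$ is exactly the hypothesis that rules out flat pieces in the limit, and hence jumps in $D^{-1}$; this is what restores continuity. Producing the explicit homeomorphisms witnessing the $J_1$--convergence of $\mathcal{H}(f^n,d^n)$ to $\mathcal{H}(f,d)$ -- especially synchronizing the jumps of $f^n$, which land at the (converging) jump times of $(d^n)^{-1}$ applied to the jump times of $f^n$ -- is where the technical effort of Straka--Henry is concentrated, and everything else in the lemma follows mechanically from the continuous mapping theorem.
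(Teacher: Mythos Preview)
The paper does not actually prove this lemma; it is quoted as \cite[Theorem~3.6]{straka2011} and used as a black box. So there is no ``paper's own proof'' to compare against. That said, your outline is exactly the right way to recover the result from the ingredients the paper does record: you correctly express $X^n_t$ as $\mathcal{H}(S^n_\cdot,T^n_\cdot)$, invoke the $J_1$--continuity of $\mathcal{H}$ on $\mathbb{D}\times\mathbb{D}_{\uparrow\uparrow}$ (which the paper attributes to Straka--Henry just before the lemma), and conclude via the continuous mapping theorem. You are also right that the only substantive step is the continuity of $\mathcal{H}$, and that the hypothesis $D\in\mathbb{D}_{\uparrow\uparrow}$ is precisely what makes $D^{-1}$ continuous and hence the composition stable.

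One small point: your handling of the clock $N^n_t$ is a bit casual. You say time--reparameterization by a common monotone step clock is ``absorbed'' into the $J_1$ homeomorphism, but $N^n_t$ is not a homeomorphism of $[0,T]$ (it is a step function), so strictly speaking this is not the $J_1$ change of variables. The cleaner statement is that $\mathcal{H}$ is invariant under simultaneous reparameterization of both coordinates by any increasing c\`adl\`ag function, because the generalized inverse undoes it; this is how Straka--Henry handle the random time change. Your conclusion is unaffected, but the justification you gave for dropping $N^n_t$ is not quite the right one.
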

As in this paper we are interested mostly in the temporal jumps of
our CTRWs one may assume throughout that the spatial jumps $\left\{ J_{i}^{n}\right\} \in\mathbb{R}^{d}$
for $d\in\mathbb{N}$ are i.i.d such that 
\[
\lim_{n\rightarrow\infty}\sum_{i=1}^{\left[nt\right]}J_{i}^{n}\overset{J_{1}}{\Rightarrow}B_{t},
\]
where $B_{t}$ is a standard Bm in $\mathbb{R}^{d}$. We use the term
\emph{time-change} for a function $f$ s.t $f\left(0\right)=0$, $f$
is increasing and continuous. 

\section{from relative stability to sub-diffusion}

We begin with some technical lemmas that will be useful in understanding
the mapping $\Phi_{\psi}$.
\begin{lem}
\label{lem:Slowly Varying result}Let $L$ be a slowly varying function
and $\phi\left(s\right)$ a positive function s.t for every $\lambda>0$
there exist positive constants $C_{1}\left(\lambda\right)$ and $C_{2}\left(\lambda\right)$
s.t
\begin{equation}
C_{1}\left(\lambda\right)\leq\frac{\phi\left(\lambda s\right)}{\phi\left(s\right)}\leq C_{2}\left(\lambda\right)\qquad\forall s>S\left(\lambda\right),\label{eq:Bounding phi}
\end{equation}
for some positive constant $S\left(\lambda\right)$ that may depend
on $\lambda$. Then $L\left(\phi\left(s\right)\right)$ is again slowly
varying.
\end{lem}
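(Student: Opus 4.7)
The plan is to reduce the claim to the uniform convergence theorem for slowly varying functions, which asserts that if $L$ is slowly varying at infinity then $L(\mu x)/L(x) \to 1$ as $x \to \infty$ uniformly in $\mu$ on any compact subset of $(0,\infty)$. What we need to show is that, for each $\lambda > 0$,
\[
\lim_{s \to \infty} \frac{L(\phi(\lambda s))}{L(\phi(s))} = 1.
\]

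Fix $\lambda > 0$. The key observation is that the hypothesis on $\phi$ lets us write $\phi(\lambda s) = r_s \phi(s)$ where $r_s := \phi(\lambda s)/\phi(s)$ lies in the compact set $K_\lambda := [C_1(\lambda), C_2(\lambda)] \subset (0,\infty)$ for every $s > S(\lambda)$. Thus
\[
\frac{L(\phi(\lambda s))}{L(\phi(s))} = \frac{L(r_s \phi(s))}{L(\phi(s))},
\]
with $r_s$ confined to $K_\lambda$. By uniform convergence, for any $\epsilon > 0$ one can find $M$ so that $\sup_{\mu \in K_\lambda}|L(\mu x)/L(x) - 1| < \epsilon$ whenever $x > M$. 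Choosing $s$ large enough that $\phi(s) > M$ then yields the conclusion by specializing to $\mu = r_s$.

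The one subtlety is ensuring $\phi(s) \to \infty$ so that $\phi(s) > M$ eventually; in the applications of this lemma in the paper, $\phi$ will be (comparable to) a Bernstein function evaluated near $0$ after an inversion, and unboundedness is automatic, but it is worth noting that the bounds $C_1(\lambda), C_2(\lambda)$ by themselves do not force $\phi(s) \to \infty$. If desired, one can bypass this altogether by invoking the Karamata representation $L(x) = c(x)\exp\bigl(\int_a^x \epsilon(u)/u\,du\bigr)$ with $c(x) \to c > 0$ and $\epsilon(u) \to 0$, and bounding
\[
\left|\log \frac{L(\phi(\lambda s))}{L(\phi(s))}\right| \le \left|\log \frac{c(\phi(\lambda s))}{c(\phi(s))}\right| + \sup_{u \ge \min(\phi(s), \phi(\lambda s))} |\epsilon(u)| \cdot \bigl|\log r_s\bigr|,
\]
where the $|\log r_s|$ factor is controlled by $\log(C_2(\lambda)/C_1(\lambda))$. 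The main obstacle, aside from this growth issue, is purely bookkeeping: verifying that $K_\lambda$ is genuinely a compact subset of the open half-line (i.e.\ $C_1(\lambda) > 0$), which the hypothesis explicitly supplies.
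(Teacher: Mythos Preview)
Your proof is correct and follows essentially the same route as the paper: write $\phi(\lambda s)=\lambda'\phi(s)$ with $\lambda'\in[C_1(\lambda),C_2(\lambda)]$ and apply the Uniform Convergence Theorem for slowly varying functions on that compact $\lambda'$-set. Your remark that the hypotheses do not by themselves force $\phi(s)\to\infty$ is a fair caveat; the paper's proof glosses over exactly the same point, implicitly relying on the fact that in the intended applications $\phi$ is unbounded.
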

\begin{proof}
Indeed, by the Uniform Convergence Theorem (UCT) (\cite[Theorem 1.2.1]{bingham1989regular})
for slowly varying functions we know that 
\[
\lim_{s\rightarrow\infty}\frac{L\left(\lambda s\right)}{L\left(s\right)}=1,
\]
uniformly on any compact $\lambda$-set in $\left(0,\infty\right)$.
Since by (\ref{eq:Bounding phi}) there exists $\lambda'\in[C_{1},C_{2}]$
s.t for every $s>S$ 
\[
\frac{L\left(\phi\left(\lambda s\right)\right)}{L\left(\phi\left(s\right)\right)}=\frac{L\left(\lambda'\phi\left(s\right)\right)}{L\left(\phi\left(s\right)\right)},
\]
taking the limit while using the uniform convergence we obtain the
result.
\end{proof}
\begin{lem}
\label{lem:Injection_result}Let $\psi_{1},\psi_{2}\in\mathfrak{B}$,
then 
\begin{equation}
\hat{\Phi}_{\psi_{2}}\left(\mathfrak{\mathfrak{\hat{L}}}_{\psi_{1}}\right)\subset\mathfrak{\mathfrak{\hat{L}}}_{\psi_{1}\left(\psi_{2}\right)}.\label{eq:Injection result : injective}
\end{equation}
\end{lem}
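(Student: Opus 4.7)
Fix $f \in \hat{\mathfrak{L}}_{\psi_1}$, so there is a slowly varying $L$ with
\[
\lim_{u \to 0^{+}} \frac{|f(u)-1|}{\psi_1(u)\, L(u^{-1})} = 1.
\]
Set $g(s) := \hat{\Phi}_{\psi_2}(f)(s) = f(\psi_2(s))$. Since $\psi_2 \in \mathfrak{B}$ has characteristics $(0,b,\mu)$, one has $\psi_2(0^{+}) = 0$, so $u = \psi_2(s) \to 0$ as $s \to 0^{+}$. Substituting, I get
\[
\lim_{s \to 0^{+}} \frac{|g(s)-1|}{\psi_1(\psi_2(s))\, L(\psi_2(s)^{-1})} = 1,
\]
and the goal reduces to showing that the function $\tilde L(x) := L\bigl(\psi_2(1/x)^{-1}\bigr)$ is slowly varying at infinity, and that $\psi_1 \circ \psi_2 \in \mathfrak{B}$.

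The main work is the slow variation of $\tilde L$, and this is where I plan to invoke \lemref{Slowly Varying result} with $\phi(x) := \psi_2(1/x)^{-1}$. The required bound $C_1(\lambda) \le \phi(\lambda x)/\phi(x) \le C_2(\lambda)$ translates, after the change of variable $s = 1/x$, to
\[
\frac{\phi(\lambda x)}{\phi(x)} = \frac{\psi_2(s)}{\psi_2(s/\lambda)}.
\]
To bound this ratio I would use the fact that every $\psi \in \mathfrak{B}$ is concave on $(0,\infty)$ with $\psi(0^{+}) = 0$, hence $\psi(s)/s$ is non-increasing. Combined with monotonicity of $\psi$, this yields the elementary two-sided bound
\[
\min(\lambda,1)\, \psi_2(s) \;\le\; \psi_2(\lambda s) \;\le\; \max(\lambda,1)\, \psi_2(s) \qquad \forall s > 0,\ \lambda > 0,
\]
valid uniformly in $s$. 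This immediately produces constants $C_1(\lambda), C_2(\lambda) > 0$ for the ratio above, so \lemref{Slowly Varying result} applies and $\tilde L$ is slowly varying.

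It remains to verify $\psi_1 \circ \psi_2 \in \mathfrak{B}$. By \cite[Cor.\ 3.7(ii)]{schilling2012bernstein} (or a direct check using the L\'evy--Khintchine representation), the composition of two Bernstein functions is Bernstein; its constant term equals $\psi_1(\psi_2(0^{+})) = \psi_1(0^{+}) = 0$ since both $\psi_1, \psi_2$ belong to $\mathfrak{B}$, and it is unbounded because $\psi_2(s) \to \infty$ as $s \to \infty$ and $\psi_1$ is unbounded and increasing. Hence the composed function lies in $\mathfrak{B}$, and putting everything together gives $g \in \hat{\mathfrak{L}}_{\psi_1(\psi_2)}$, which is the desired inclusion.

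The main obstacle is the slow variation step, and its heart is the scaling bound on $\psi_2$; once that is in hand the rest is formal. I expect the concavity argument to go through without surprises, although care is needed because $\tilde L$ involves $\psi_2$ evaluated at small arguments, so I must be sure the Bernstein bounds apply \emph{near zero} (which they do, since concavity with $\psi_2(0^{+})=0$ holds on all of $(0,\infty)$).
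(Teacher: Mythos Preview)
Your proof is correct and follows the same overall architecture as the paper: reduce to showing that $\tilde L(x)=L\bigl(\psi_2(1/x)^{-1}\bigr)$ is slowly varying, then invoke \lemref{Slowly Varying result} after bounding the ratio $\psi_2(s)/\psi_2(s/\lambda)$ by positive constants depending only on $\lambda$.

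The difference lies in how that ratio bound is obtained. The paper first treats the driftless case $(0,0,\mu)$ using the two-sided estimate from \cite[Lemma 3.4]{schilling2012bernstein}, which compares $\psi_2(s^{-1})$ to $s^{-1}I_\mu(s)$ with $I_\mu(s)=\int_0^s \mu(y,\infty)\,dy$, and then handles the case $b>0$ by a separate computation involving the limit of $s\psi'(s^{-1})$. Your argument replaces all of this with the single observation that a Bernstein function with $\psi_2(0^{+})=0$ is concave and increasing, which immediately gives
\[
\min(\lambda,1)\;\le\;\frac{\psi_2(\lambda s)}{\psi_2(s)}\;\le\;\max(\lambda,1)
\]
uniformly in $s>0$. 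This is more elementary and avoids the case split on the drift; it also yields explicit constants and holds for all $s$ rather than eventually. The paper's route, by contrast, exhibits the connection to the integrated tail $I_\mu$, which is not needed here but is thematically closer to the sub/super-homogeneity conditions used in the companion \lemref{uniqueness_result}. Your additional check that $\psi_1\circ\psi_2\in\mathfrak{B}$ is not carried out in the paper's proof (the target space $\hat{\mathfrak L}_{\psi_1(\psi_2)}$ is defined for any positive function vanishing at $0$), but it does no harm.
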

\begin{proof}
Suppose first that $f\in\mathfrak{\mathfrak{\hat{L}}}_{\psi_{1}}$.
By definition $f\left(s\right)\sim1-\psi_{1}\left(s\right)L\left(\frac{1}{s}\right)$
when $s\rightarrow0$ where $L$ is a slowly varying function. It
then follows that $\hat{\Phi}_{\psi_{2}}f\left(s\right)\sim1-\psi_{1}\left(\psi_{2}\left(s\right)\right)L\left(\frac{1}{\psi_{2}\left(s\right)}\right)$.
Denote $L'\left(\frac{1}{s}\right)=L\left(\frac{1}{\psi_{2}\left(s\right)}\right)$.
We must show that $L'\left(s\right)=L\left(\frac{1}{\psi_{2}\left(s^{-1}\right)}\right)$
is slowly varying. By Lemma \ref{lem:Slowly Varying result} it is
enough to show that 
\[
C_{1}\leq\frac{\psi_{2}\left(s^{-1}\right)}{\psi_{2}\left(\left(\lambda s\right)^{-1}\right)}\leq C_{2},
\]
for some positive constants $C_{1}$ and $C_{2}$ that may depend
on $\lambda$. First assume that $\psi_{2}$ has representation $\left(0,0,\mu\right)$.
From \cite[Lemma 3.4]{schilling2012bernstein} we see that
\begin{equation}
\frac{e-1}{e}\lambda\frac{I_{\mu}\left(s\right)}{I_{\mu}\left(\lambda s\right)}\leq\frac{\psi_{2}\left(s^{-1}\right)}{\psi_{2}\left(\left(\lambda s\right)^{-1}\right)}\leq\frac{e}{e-1}\lambda\frac{I_{\mu}\left(s\right)}{I_{\mu}\left(\lambda s\right)},\label{eq:Bounding psi}
\end{equation}
for every $s>0$ where $I_{\mu}\left(s\right)=\integral 0s\mu\left(y,\infty\right)dy$.
Suppose first that $\lambda\geq1$ then by the fact that $\psi_{2}$
is increasing $\frac{\psi_{2}\left(s^{-1}\right)}{\psi_{2}\left(\left(\lambda s\right)^{-1}\right)}\geq1$,
which shows that 
\[
1\leq\frac{\psi_{2}\left(s^{-1}\right)}{\psi_{2}\left(\left(\lambda s\right)^{-1}\right)}\leq\frac{e}{e-1}\lambda.
\]
Similarly, if $\lambda<1$ we have
\[
\frac{e-1}{e}\lambda\leq\frac{\psi_{2}\left(s^{-1}\right)}{\psi_{2}\left(\left(\lambda s\right)^{-1}\right)}\leq1.
\]
Now, if $\mbox{\ensuremath{\psi}}_{2}\left(s\right)=bs+\psi'\left(s\right)$,
where $\psi'\left(s\right)$ has representation $\left(0,0,\mu\right)$
and $b>0$, then
\begin{alignat*}{1}
\frac{\psi_{2}\left(s^{-1}\right)}{\psi_{2}\left(\left(\lambda s\right)^{-1}\right)} & =\frac{bs^{-1}+\psi'\left(s^{-1}\right)}{b\lambda^{-1}s^{-1}+\psi'\left(\lambda^{-1}s^{-1}\right)}\\
 & =\frac{b+\psi'\left(s^{-1}\right)s}{b\lambda^{-1}+\psi'\left(\lambda^{-1}s^{-1}\right)s}.
\end{alignat*}
 We see that for $\lambda\geq1$ , 
\[
\frac{b+\psi'\left(s^{-1}\right)s}{b\lambda^{-1}+\psi'\left(s^{-1}\right)s}\leq\frac{b+\psi'\left(s^{-1}\right)s}{b\lambda^{-1}+\psi'\left(\lambda^{-1}s^{-1}\right)s}\leq\frac{b+\psi'\left(s^{-1}\right)s}{b\lambda^{-1}+\frac{e-1}{e}\lambda\psi'\left(s^{-1}\right)s}.
\]
 Note that by integration by parts and monotone convergence we see
that the limit
\begin{align*}
M & =\lim_{s\rightarrow\infty}\psi'\left(s^{-1}\right)s\\
 & =\lim_{s\rightarrow\infty}s\integral 0{\infty}s^{-1}e^{-s^{-1}y}\mu\left(y,\infty\right)dy\\
 & =\integral 0{\infty}\mu\left(y,\infty\right)dy
\end{align*}
exists and $M\in[0,\infty]$. It follows that for some large enough
$S$, for every $s>S$ we have 
\[
C_{1}\left(\lambda\right)\leq\frac{\psi_{2}\left(s^{-1}\right)}{\psi_{2}\left(\left(\lambda s\right)^{-1}\right)}\leq C_{2}\left(\lambda\right).
\]
This shows that $\frac{1}{\psi_{2}\left(s^{-1}\right)}$ satisfies
(\ref{eq:Bounding phi}), $L\left(\frac{1}{\psi_{2}\left(s^{-1}\right)}\right)$
is slowly varying and that (\ref{eq:Injection result : injective})
holds. 
\end{proof}
We say that a measure $\mu$ is sub-homogeneous(super-homogeneous)
if for every $\lambda>0$ there exists a constant $C\left(\lambda\right)$
s.t $\mu\left(C\left(\lambda\right)x,\infty\right)\leq\lambda\mu\left(x,\infty\right)$($\lambda\mu\left(x,\infty\right)\leq\mu\left(C\left(\lambda\right)x,\infty\right)$)
for every $x>0$. For example, if $\mu\left(dx\right)$ is a finite
measure and $\mu\left(x,\infty\right)=x^{-\alpha}L\left(x\right)$
where $L\left(x\right)$ converges to a constant at infinity, then
$\mu$ is sub-homogeneous. The following is a partial uniqueness result.
\begin{lem}
\label{lem:uniqueness_result}Let $\psi_{1},\psi_{2}\in\mathfrak{B}$,
and assume that the measure $\mu_{2}$ of $\psi_{2}$ is sub-homogeneous
or super-homogeneous. Then $\Phi_{\psi_{2}}^{-1}\left(\mathcal{\mathfrak{L}}_{\psi_{1}\left(\psi_{2}\right)}\right)=\mathcal{\mathfrak{L}}_{\psi_{1}}$
. 
\end{lem}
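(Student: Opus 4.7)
The forward inclusion $\mathfrak{L}_{\psi_{1}}\subseteq\Phi_{\psi_{2}}^{-1}\bigl(\mathfrak{L}_{\psi_{1}(\psi_{2})}\bigr)$ is immediate from \lemref{Injection_result}, so only the reverse inclusion requires work. Take $\mu\in\Phi_{\psi_{2}}^{-1}(\mathfrak{L}_{\psi_{1}(\psi_{2})})$; by assumption $\mathcal{L}\mu(\psi_{2}(s))\sim 1-\psi_{1}(\psi_{2}(s))L(1/s)$ as $s\to 0$ for some slowly varying $L$. Since $\psi_{2}$ is an unbounded increasing Bernstein function with $\psi_{2}(0^{+})=0$, it is a continuous, strictly increasing bijection on a neighbourhood of $0$, and the substitution $u=\psi_{2}(s)$ yields
\[
\mathcal{L}\mu(u)\sim 1-\psi_{1}(u)\,\tilde L(1/u),\qquad \tilde L(1/u):=L\bigl(1/\psi_{2}^{-1}(u)\bigr).
\]
The task is therefore to show that $\tilde L$ is slowly varying at $\infty$, for then $\mu\in\mathfrak{L}_{\psi_{1}}$, as desired.

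By \lemref{Slowly Varying result}, it suffices to check that $\phi(v):=1/\psi_{2}^{-1}(1/v)$ satisfies (\ref{eq:Bounding phi}). Using the monotonicity of $\psi_{2}^{-1}$, this is equivalent to exhibiting, for each $\lambda>0$, positive constants $c_{1}(\lambda),c_{2}(\lambda)$ with
\[
c_{1}(\lambda)\leq\frac{\psi_{2}(\lambda u)}{\psi_{2}(u)}\leq c_{2}(\lambda)\qquad\text{for all sufficiently small }u.
\]
Inserting this into the two-sided estimate (\ref{eq:Bounding psi}), together with the drift-inclusive form used in the last paragraph of the proof of \lemref{Injection_result}, reduces the question to a purely tail-integral statement: it suffices to bound the ratio $I_{\mu_{2}}(\lambda s)/I_{\mu_{2}}(s)$ above and below by constants depending only on $\lambda$, for $s$ large.

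The final step, which I expect to be the main obstacle, is where the (sub- or super-)homogeneity of $\mu_{2}$ enters. In the sub-homogeneous case, iterating $\mu_{2}(C(\eta)x,\infty)\leq\eta\,\mu_{2}(x,\infty)$ for some fixed $\eta<1$ (for which necessarily $C(\eta)>1$) yields a polynomial-type tail estimate $\mu_{2}(y,\infty)\leq c\,(y/x)^{-\beta}\mu_{2}(x,\infty)$ for some $\beta>0$ and all $y\geq x$. Combined with the elementary monotone-tail comparisons
\[
I_{\mu_{2}}(s)\geq \tfrac{s}{2}\,\mu_{2}(s,\infty)\qquad\text{and}\qquad I_{\mu_{2}}(\lambda s)-I_{\mu_{2}}(s)\leq(\lambda-1)s\,\mu_{2}(s,\infty)\quad(\lambda\geq 1),
\]
this produces the desired upper bound on $I_{\mu_{2}}(\lambda s)/I_{\mu_{2}}(s)$. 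The lower bound for $\lambda\geq 1$ is trivial and the remaining inequalities for $\lambda<1$ follow by swapping the roles of $s$ and $\lambda s$. The super-homogeneous case is handled by the same scheme, with a polynomial \emph{lower} bound on $\mu_{2}(y,\infty)$ replacing the polynomial upper bound and controlling $I_{\mu_{2}}(\lambda s)/I_{\mu_{2}}(s)$ from the opposite side. The delicate point throughout is to verify that these tail comparisons transfer cleanly through the drift term $bs$ in $\psi_{2}=bs+\psi'$; this is accomplished by the same decomposition used in the second half of the proof of \lemref{Injection_result}. Once the bound on $I_{\mu_{2}}(\lambda s)/I_{\mu_{2}}(s)$ is in hand, the chain of reductions above produces exactly the hypothesis needed to apply \lemref{Slowly Varying result} and conclude.
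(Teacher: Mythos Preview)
Your reduction contains a genuine gap at the step where you claim that condition~(\ref{eq:Bounding phi}) for $\phi(v)=1/\psi_{2}^{-1}(1/v)$ is \emph{equivalent} to the two--sided ratio bound $c_{1}(\lambda)\le \psi_{2}(\lambda u)/\psi_{2}(u)\le c_{2}(\lambda)$ for small $u$. This equivalence is false: the ratio bound on $\psi_{2}$ is strictly weaker. Indeed, the proof of \lemref{Injection_result} already shows that \emph{every} $\psi_{2}\in\mathfrak{B}$ satisfies such a two--sided bound, and your target quantity $I_{\mu_{2}}(\lambda s)/I_{\mu_{2}}(s)$ lies in $[\min(1,\lambda),\max(1,\lambda)]$ for any L\'evy measure whatsoever, by monotonicity of the tail. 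So if your chain of reductions were correct, the lemma would hold with no sub- or super-homogeneity hypothesis at all. It does not: take $\mu_{2}$ with tail $\mu_{2}(y,\infty)\sim 1/\log y$ at infinity (and, say, $\mu_{2}(y,\infty)=y^{-1/2}$ on $(0,1]$, so that $\mu_{2}$ is infinite and $\psi_{2}$ is unbounded). Then $\psi_{2}(u)\sim 1/|\log u|$ as $u\to0$, so $\psi_{2}(\lambda u)/\psi_{2}(u)\to 1$ for every $\lambda$, yet $\psi_{2}^{-1}(t)\sim e^{-1/t}$ and $\psi_{2}^{-1}(t)/\psi_{2}^{-1}(kt)\to\infty$ for $k<1$. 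This $\mu_{2}$ is neither sub- nor super-homogeneous, which is exactly the point.

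What is actually needed in order to pass from $\psi_{2}$ to $\psi_{2}^{-1}$ is not a mere two--sided bound but the stronger statement that for each $k\in(0,1)$ there is some $C'(k)>0$ with $k\,\psi_{2}(s)\ge \psi_{2}(C'(k)s)$ (equivalently, that the lower bound $c_{1}(\lambda)$ can be pushed to $\infty$ as $\lambda\to\infty$). The paper obtains this directly: integrating the sub-homogeneity inequality $\mu_{2}(C(k)x,\infty)\le k\,\mu_{2}(x,\infty)$ against $se^{-sx}\,dx$ gives $k\int(1-e^{-sy})\mu_{2}(dy)\ge\int(1-e^{-sy/C(k)})\mu_{2}(dy)$, hence $k\,\psi_{2}(s)\ge\psi_{2}(s/C(k))$, and inverting yields $\psi_{2}^{-1}(t)\le C(k)\,\psi_{2}^{-1}(kt)$ immediately. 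Your route via $I_{\mu_{2}}$ can be repaired, but only by proving the sharper estimate that $I_{\mu_{2}}(\lambda s)/I_{\mu_{2}}(s)$ grows \emph{strictly sublinearly} in $\lambda$ (which the polynomial tail decay coming from sub-homogeneity does give); mere boundedness, which is all you assert, is insufficient.
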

\begin{proof}
We prove this for when $\mu_{2}$ is sub-homogeneous as the proof
for the super-homogeneous is similar. Let $f\in\hat{\mathfrak{L}}$
s.t $\hat{\Phi}_{\psi_{2}}f\in\mathcal{\mathfrak{\hat{L}}}_{\psi_{1}\left(\psi_{2}\right)}$,
or equivalently that $\hat{\Phi}_{\psi_{2}}f\sim1-\psi_{1}\left(\psi_{2}\left(s\right)\right)L\left(s^{-1}\right)$
as $s\rightarrow0$ where $L\left(s\right)$ is slowly varying. It
follows that $\hat{\Phi}_{\psi_{2}}^{-1}f\sim1-\psi_{1}\left(s\right)L\left(\frac{1}{\psi_{2}^{-1}\left(s\right)}\right)$,
and we must show that $L'\left(s\right)=L\left(\frac{1}{\psi_{2}^{-1}\left(\frac{1}{s}\right)}\right)$
is slowly varying. By the characterization of regularly varying function,
in order to show that $L'\left(s\right)$ is slowly varying it is
enough to show that 
\[
\frac{L'\left(\lambda s\right)}{L'\left(s\right)}\rightarrow1,
\]
for $\lambda\in\Lambda$ where $\Lambda$ is a set of positive measure.
Let $\lambda\in[1,\infty)$, it is then enough to show that 
\begin{equation}
C'_{1}\leq\frac{\frac{1}{\psi_{2}^{-1}\left(\frac{1}{\lambda s}\right)}}{\frac{1}{\psi_{2}^{-1}\left(\frac{1}{s}\right)}}\leq C'_{2},\label{eq:uniqueness result :  bounds}
\end{equation}
for some positive constants $C'_{1},C'_{2}$ that may depend on $\lambda$.
Since $\psi_{2}^{-1}\left(s\right)$ is increasing we see that 
\[
1\leq\frac{\psi_{2}^{-1}\left(\frac{1}{s}\right)}{\psi_{2}^{-1}\left(\lambda^{-1}\frac{1}{s}\right)}.
\]
It is now enough to show that $\psi_{2}^{-1}\left(t\right)\leq C'_{2}\left(k\right)\psi_{2}^{-1}\left(kt\right)$
for $0<k\leq1$ and positive $C'_{2}\left(k\right)$. Let $t=bs+\integral 0{\infty}\left(1-e^{-sy}\right)\mu\left(dy\right)$
so that $\psi_{2}^{-1}\left(t\right)=s$. By the fact that $\mu$
is sub-homogeneous we see that 
\begin{alignat*}{1}
kt & =kbs+k\integral 0{\infty}\left(1-e^{-sy}\right)\mu_{2}\left(dy\right)\\
 & \geq kbs+\integral 0{\infty}\left(1-e^{-sy}\right)\mu_{2}\left(C\left(k\right)dy\right)\\
 & =kbs+\integral 0{\infty}\left(1-e^{-sC\left(k\right)y}\right)\mu_{2}\left(dy\right)\\
 & \geq C'\left(k\right)bs+\integral 0{\infty}\left(1-e^{-sC'\left(k\right)y}\right)\mu_{2}\left(dy\right),
\end{alignat*}
where $C'\left(k\right)=\min\left\{ C\left(k\right),k\right\} $.
By the fact that $\psi_{2}^{-1}$ is increasing we have $\psi_{2}^{-1}\left(kt\right)\geq sC'\left(k\right)$,
or that $\psi_{2}^{-1}\left(t\right)\leq C'\left(k\right)^{-1}\psi_{2}^{-1}\left(kt\right)$.
It follows that (\ref{eq:uniqueness result :  bounds}) is satisfied
with $C'_{1}=1$ and $C'_{2}=C'\left(\lambda^{-1}\right)^{-1}$. Then
$L'\left(s\right)$ is slowly varying and the result follows. The
proof for the super-homogeneous case follows along similar lines while
taking $\lambda\in(0,1]$. 
\end{proof}
Combining Lemma \ref{lem:Injection_result} and Lemma \ref{lem:uniqueness_result}
we obtain the following.
\begin{prop}
\label{prop:Injection_and_uniqueness}Let $\psi\in\mathfrak{B}$,
then the set of distributions $\mathfrak{L}_{s}^{\psi}$ is contained
in $\mathfrak{L}_{\psi}$. Moreover, if the L�vy measure of $\psi$
is sub-homogeneous or super-homogeneous then $\Phi_{\psi}^{-1}\left(\mathfrak{L}_{\psi}\right)=\mathfrak{L}_{s}$.
\end{prop}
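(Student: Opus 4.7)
The plan is to recognize that the proposition is a direct specialization of \lemref{Injection_result} and \lemref{uniqueness_result} to the choice $\psi_1(s)=s$, so that the analytic work is already absorbed into those two lemmas; one need only check that the identity map belongs to $\mathfrak{B}$ and acts as a left-neutral element under composition.

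First I would observe that $\mathrm{id}(s)=s$ lies in $\mathfrak{B}$: it is an unbounded Bernstein function with characteristics $(0,1,0)$, which is admissible by the definition of $\mathfrak{B}$ (we only required $a=0$; $b\geq 0$ and a suitable L\'evy measure are allowed, and here the measure is zero). In particular the class $\mathfrak{L}_s$ is well defined, and by \eqref{eq:characterization of the first layer} it consists precisely of distributions whose truncated first moment is slowly varying. Moreover, for any $\psi\in\mathfrak{B}$ the composition $s\circ\psi$ equals $\psi$ itself.

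For the first assertion, apply \lemref{Injection_result} with $\psi_1=s$ and $\psi_2=\psi$ to obtain
\[
\hat{\Phi}_\psi\bigl(\hat{\mathfrak{L}}_s\bigr)\subset\hat{\mathfrak{L}}_{s\circ\psi}=\hat{\mathfrak{L}}_\psi.
\]
Translating through the Laplace-transform bijection $\mathcal{L}$ and using the definition $\mathfrak{L}_s^\psi=\Phi_\psi(\mathfrak{L}_s)$, this becomes $\mathfrak{L}_s^\psi\subset\mathfrak{L}_\psi$, which is the first claim. For the second assertion, assume the L\'evy measure of $\psi$ is sub-homogeneous or super-homogeneous; then \lemref{uniqueness_result}, again with $\psi_1=s$ and $\psi_2=\psi$, gives
\[
\Phi_\psi^{-1}\bigl(\mathfrak{L}_\psi\bigr)=\Phi_\psi^{-1}\bigl(\mathfrak{L}_{s\circ\psi}\bigr)=\mathfrak{L}_s,
\]
which is the desired equality.

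No essential obstacle remains at this stage. Every nontrivial analytic ingredient -- preservation of slow variation under composition with $\psi$ (through \lemref{Slowly Varying result}), the two-sided bounds on $\psi(s^{-1})/\psi((\lambda s)^{-1})$, the separate treatment of the drift term $b$, and the (super-/sub-)homogeneity manipulation used to control $\psi^{-1}$ near $0$ -- has already been done in the preceding lemmas. The only work specific to the proposition is checking that their hypotheses are met by $\psi_1=s$, which is immediate from the observations above.
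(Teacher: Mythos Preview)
Your proposal is correct and follows exactly the paper's own approach: the paper simply states that the proposition is obtained by combining \lemref{Injection_result} and \lemref{uniqueness_result}, which is precisely what you do by specializing both lemmas to $\psi_1(s)=s$, $\psi_2=\psi$. Your additional verification that $s\in\mathfrak{B}$ and that $s\circ\psi=\psi$ makes the deduction explicit, but the strategy is identical.
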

We now apply \propref{Injection_and_uniqueness} to CTRW. 
\begin{prop}
\label{prop:time_changed_CTRW}Let $Y_{t}$ be a CTRW with i.i.d space-time
jumps $\left(J_{k},W_{k}^{\psi}\right)$ where $\left\{ W_{k}^{\psi}\right\} \in\mathfrak{L}_{s}^{\psi}$
and $\psi\left(s\right)\in\mathfrak{B}$. Then there exists a CTRW
$X_{t}$ with i.i.d space-time jumps $\left(J'_{k},W{}_{k}^{s}\right)$
where $\left\{ W_{k}^{s}\right\} \in\mathfrak{L}_{s}$ and an inverse
of a subrodinator with symbol $\psi\left(s\right)$ $E_{t}$ that
is independent of $\left\{ W_{k}^{s}\right\} $ s.t 
\begin{equation}
Y_{t}\overset{J_{1}}{\sim}\left(X_{E_{t}-}\right)^{+}.\label{eq:time changed CTRW}
\end{equation}
Conversely, assume $Y_{t}$ is a CTRW with waiting times $\left\{ W_{k}^{\psi}\right\} \in\mathfrak{L}_{\psi}$
s.t $Y_{t}\overset{J_{1}}{\sim}\left(X_{E_{t}-}\right)^{+}$, where
$X_{t}$ is a CTRW with waiting times $\left\{ W_{k}\right\} $ and
$E_{t}$ is the inverse of a subordinator of symbol $\psi\left(s\right)$
that is independent of $\left\{ W_{k}\right\} $. Moreover, assume
that $\psi\left(s\right)\in\mathfrak{B}$ has representation $\left(0,b,\mu\right)$
where $\mu$ is super-homogeneous or sub-homogeneous, then $W_{1}^{\psi}\in\mathfrak{L}_{s}^{\psi}$
and $W_{1}\in\mathfrak{L}_{s}$. 
\end{prop}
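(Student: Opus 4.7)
The core tool I will use is the subordination identity: if $D_t$ is a subordinator with Laplace symbol $\psi\in\mathfrak{B}$ independent of a nonnegative r.v.\ $U$ with Laplace transform $f$, then
\[
\mathbb{E}[e^{-sD_U}]=\mathbb{E}[e^{-\psi(s)U}]=f(\psi(s))=\hat\Phi_\psi f(s),
\]
so $D_U\sim\Phi_\psi(U)$. Combined with the independent increments of $D$, this gives that if $T_n=U_1+\cdots+U_n$ with $U_k$ i.i.d.\ copies of $U$, then the increments $\{D_{T_n}-D_{T_{n-1}}\}$ are i.i.d.\ with law $\Phi_\psi(U_1)$. Thus, time-subordinating a renewal process by an independent subordinator $D$ with symbol $\psi$ implements the map $\Phi_\psi$ at the level of the increment law.

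For the forward direction, I first select $f\in\mathfrak{\hat L}_s$ with $\hat\Phi_\psi f=\mathcal{L}W_1^\psi$, which exists by the hypothesis $W_1^\psi\in\mathfrak{L}_s^\psi=\Phi_\psi(\mathfrak{L}_s)$. Next, on an enlarged probability space carrying i.i.d.\ $\{W_k^s\}$ with Laplace transform $f$ together with an independent subordinator $D_t$ of symbol $\psi$ and its inverse $E_t$, I use a regular conditional distribution of $J_k$ given $W_k^\psi$ to build $J'_k$ as a measurable function of $D_{W_k^s}$ and auxiliary i.i.d.\ uniforms, arranging that $(J'_k,D_{W_k^s})\overset{d}{=}(J_k,W_k^\psi)$ jointly in $k$. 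Letting $X_t$ be the CTRW associated with $(J'_k,W_k^s)$, the composed process $(X_{E_t-})^+=\mathcal{H}(X,D)$ is a pure-jump process whose $n$th jump has size $J'_n$ at time $D_{T_n^s}$, where $T_n^s=W_1^s+\cdots+W_n^s$; this is the structural formula for $\mathcal H$ applied to a CTRW and the inverse of a subordinator. Since $\{(J_n,T_n^\psi)\}\overset{d}{=}\{(J'_n,D_{T_n^s})\}$ as sequences, the identity $Y_t\overset{J_1}{\sim}(X_{E_t-})^+$ follows.

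For the converse I would run the same computation in reverse. The equality in law of $Y$ and $(X_{E_t-})^+$ forces, by comparing Laplace transforms of the first waiting time, $\mathcal{L}W_1^\psi(s)=\mathcal{L}W_1(\psi(s))$, i.e.\ $\mathcal{L}W_1^\psi=\hat\Phi_\psi(\mathcal{L}W_1)$. Thus $W_1^\psi\in\Phi_\psi(\mathfrak{L})$; combining this with the assumption $W_1^\psi\in\mathfrak{L}_\psi$ and the sub-/super-homogeneity of $\mu$, \propref{Injection_and_uniqueness} yields $W_1\in\mathfrak{L}_s$, and hence $W_1^\psi=\Phi_\psi(W_1)\in\mathfrak{L}_s^\psi$.

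The hardest step I expect is the coupling in the forward direction: the marginal match between $W_k^\psi$ and $D_{W_k^s}$ is given for free by the subordination identity, but $J_k$ may be arbitrarily dependent on $W_k^\psi$, so a careful measurable construction via regular conditional distributions is needed on the $X$-side to preserve this joint dependence. A smaller bookkeeping point is verifying that $(X_{E_t-})^+$ correctly reads off the jumps of $X$ at the times $D_{T_n^s}$ as right limits on the plateaus of $E$; this appeals to the convention for $(\cdot)^+$ recorded in Section~\ref{sec:Prelimineries}.
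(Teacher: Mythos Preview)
Your proof is correct and follows essentially the same route as the paper: realize the waiting times as increments $D_{T_k^s}-D_{T_{k-1}^s}$ of an independent subordinator, use the pathwise equivalence $\{D_{T_n^s}\le t\}=\{T_n^s\le E_t\}$ to identify $(X_{E_t-})^+$ with a CTRW having jump times $D_{T_n^s}$ and jump sizes $J_n'$, and for the converse read off $\mathcal{L}W_1^\psi=\hat\Phi_\psi(\mathcal{L}W_1)$ and invoke \propref{Injection_and_uniqueness}. One notational point to tighten: where you write $D_{W_k^s}$ you need the increment $D_{T_k^s}-D_{T_{k-1}^s}$ (same law by stationary independent increments, but a different random variable on your single-subordinator probability space), since it is this increment that your regular-conditional-distribution construction of $J_k'$ must be fed in order for the \emph{sequence} $\{(J_n',D_{T_n^s})\}$ to match $\{(J_n,T_n^\psi)\}$ in law; your subsequent sentence about the jump times being $D_{T_n^s}$ shows you have this straight.
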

\begin{proof}
We note that if $T$ is a positive r.v then $\Phi_{\psi}\left(T\right)\sim D_{T}$
where $D_{t}$ is a subordinator of symbol $\psi$ independent of
$T$. Indeed, by the independence of $D_{t}$ and $T$ we have $\mathbb{E}\left(e^{-sD_{T}}\right)=\mathbb{E}\left(e^{-\psi\left(s\right)T}\right)=\hat{\Phi}_{\psi}\left(\mathcal{L}\left(T\right)\right)$.
Let $T_{n}^{\psi}=\sum_{k=1}^{n}W_{k}^{\psi}$ be the time of the
n'th jump of $Y_{t}$. Since $W_{1}^{\psi}\in\mathfrak{L}_{s}^{\psi}$
there exists a distribution $f^{s}\in\mathfrak{L}_{s}$ s.t $W_{1}^{\psi}\sim\Phi_{\psi}\left(f^{s}\right)$.
We now generate a sequence of i.i.d r.v's $\left\{ W_{k}'^{\psi}\right\} $
on a common probability space s.t $W_{1}'^{\psi}\sim W_{1}^{\psi}$.
Let $\left(\Omega,\mathcal{F},\mathbb{P}\right)$ be a probability
space, and let $\left\{ W_{i}^{s}\right\} $ be a sequence of i.i.d
random variables in $\Omega$ s.t $\Phi_{\psi}\left(W_{1}^{s}\right)\sim W_{1}^{\psi}$,
and let $T_{n}^{s}=\sum_{k=0}^{n}W_{k}^{s}$ . Let $D_{t}$ be a subordinator
of symbol $\psi\left(s\right)$ in $\left(\Omega,\mathcal{F},\mathbb{P}\right)$
independent of $\left\{ W_{i}^{s}\right\} $. Define $W_{k}'^{\psi}=D_{T_{k}^{s}}-D_{T_{k-1}^{s}}$,
and note that $\left\{ W_{k}'^{\psi}\right\} $ are i.i.d and $W_{1}'^{\psi}\sim W_{1}^{\psi}$.
Indeed, by the fact that $D_{t}$ is a strong Markov process, independent
of $T_{n}^{s}$ , with stationary increments we have
\begin{alignat*}{1}
W_{k}'^{\psi} & =D_{T_{k}^{s}}-D_{T_{k-1}^{s}}\\
 & \sim D_{T_{k}^{s}-T{}_{k-1}^{s}}\\
 & \sim D_{W_{k}^{s}}\sim\Phi_{\psi}\left(W_{k}^{s}\right)\\
 & \sim W_{k}^{\psi}.
\end{alignat*}
 By the independence of increments of $D_{t}$, we see that $W_{k}'^{\psi}$
are also independent. Assume now that $\left\{ J'_{k}\right\} $ are
i.i.d r.v's in $\Omega$ s.t $\left(J'_{i},W_{k}'^{\psi}\right)\sim\left(J_{i},W_{k}^{\psi}\right)$
and that $X_{t}$ is the CTRW associated with the space-time jumps
$\left(J_{i}',W_{i}^{s}\right)$. Let $T_{n}'^{\psi}=\sum_{k=1}^{n}W_{k}'^{\psi}$,
and define the process 
\[
Y_{t}'=\sum_{n=1}^{\infty}J_{n}'1\left(t\right)_{\left\{ y:T_{n}'^{\psi}\leq y\right\} }.
\]
Note that $Y'_{t}$ is a CTRW with space-time jumps $\left(J'_{i},W_{k}'^{\psi}\right)$
and therefore $Y'_{t}\overset{J_{1}}{\sim}Y_{t}$. Next we show (\ref{eq:time changed CTRW}).
Since $\psi$ is unbounded we see that $D_{t}$ is strictly increasing
and therefore $E_{t}$ is continuous, and it follows that a.s for
every $\omega\in\Omega$ we have 
\begin{alignat*}{1}
t\in\left\{ y:D_{T_{n}^{s}}\left(\omega\right)<y\right\}  & \iff t\in\left\{ y:T_{n}^{s}\left(\omega\right)<E_{y}\left(\omega\right)\right\} \\
 & \iff E_{t}\left(\omega\right)\in\left\{ y:T_{n}^{s}\left(\omega\right)<y\right\} ,
\end{alignat*}
and therefore
\begin{alignat}{1}
Y'_{t} & =\left(Y'_{t-}\right)^{+}\nonumber \\
 & =\left(\sum_{n=1}^{\infty}J_{i}'1\left(t\right)_{\left\{ y:D_{T_{n}^{s}}<y\right\} }\right)^{+}\nonumber \\
 & =\left(\sum_{n=1}^{\infty}J_{i}'1\left(E_{t}\right)_{\left\{ y:T_{n}^{s}<y\right\} }\right)^{+}\nonumber \\
 & =\left(X_{E_{t}-}\right)^{+}.\label{eq:time changed CTRW-1}
\end{alignat}
Now, suppose that $Y_{t}$ is a CTRW associated with the waiting times
$\left\{ W_{k}^{\psi}\right\} $, where $W_{1}^{\psi}\in\mathfrak{L}_{\psi}$
and $\psi$ has representation $\left(0,b,\mu\right)$ with $\mu$
being super-homogeneous or sub-homogeneous and that $Y_{t}\sim\left(X_{E_{t}-}\right)^{+}$,
where $X_{t}$ is a CTRW with space-time jumps $\left(J_{i},W_{i}\right)$
and $E_{t}$ is the inverse-subordinator of symbol $\psi$ independent
of $\left\{ W_{i}\right\} $. Let $T_{n}=\sum_{i=1}^{n}W{}_{i}$,
going backwards in equation (\ref{eq:time changed CTRW-1}), we see
that $W_{1}^{\psi}\sim\Phi_{\psi}W{}_{1}$. It is implied by \propref{Injection_and_uniqueness}
that $W_{1}\in\mathfrak{L}_{s}$ and the result follows. 
\end{proof}
\begin{rem}
In \cite{Meerschaert2011}, the mapping $\Phi_{\psi}$ was used implicitly
to obtain \emph{fractional Poisson processes}. Let $D_{t}$ be a subordinator
of symbol $\psi$, $E_{t}$ its inverse and let $N_{t}$ be a Poisson
process of intensity $1$. Then it was shown in \cite[Theorem 4.1]{Meerschaert2011}
that $N_{E_{t}}$ is a renewal process with waiting times $\left\{ W_{i}\right\} $
s.t 
\[
\mathbb{P}\left(W_{1}>t\right)=\mathbb{E}\left(e^{-\lambda E_{t}}\right).
\]
 
\end{rem}

\begin{rem}
\label{rem:strict inclusion}Let us say a distribution $f\left(dx\right)$
is a \emph{stable-mixture }if it is of the form 
\[
f\left(dx\right)=\integral 0{\infty}t^{-\nicefrac{1}{\alpha}}g\left(t^{-\frac{1}{\alpha}}x\right)p\left(dt\right)dx,
\]
 where $g\left(x\right)$ is the density of a standard stable r.v
of index $0<\alpha<1$ and $p$$\left(dt\right)$ is a measure whose
first moment (maybe infinite) is slowly varying . In other words,
$f\left(dx\right)$ is a stable-mixture if and only if $f\in\mathfrak{L}_{s}^{s^{\alpha}}$.
It is obvious that $\mathfrak{L}_{s}^{s^{\alpha}}\subsetneq\mathfrak{L}_{s^{\alpha}}$.
Firstly, distributions in $\mathfrak{L}_{s}^{s^{\alpha}}$ have densities
which may not be the case for distributions in $\mathfrak{L}_{s^{\alpha}}$.
Moreover, by (\ref{eq:characterization of the first layer}) we see
that whenever $\hat{f}\in\hat{\mathfrak{L}}_{\psi}$ s.t $\hat{f}\sim1-\psi\left(s\right)L\left(\frac{1}{s}\right)$
with $L$ a slowly varying function s.t $\lim_{s\rightarrow\infty}L\left(s\right)$
is zero or does not exist, $f\notin\mathfrak{L}_{s}^{\psi}$. Indeed,
\cite[Corollary 8.1.7 ]{bingham1989regular} states that if $L\left(t\right)$
is slowly varying then $\hat{f}\left(s\right)\sim1-sL\left(s^{-1}\right)$
is equivalent to $\int_{0}^{t}ydf\left(y\right)\sim L\left(t\right)$
hence $L$ must be increasing. A natural question is whether $\mathfrak{L}_{s}^{s^{\alpha}}$
is weakly dense in $\mathfrak{L}_{s^{\alpha}}$? Unfortunately we
could not answer that. We could not even answer what appears to be
a simpler version of that question, namely, if $0<a<b$ and $A_{a}^{b}=\left\{ \hat{f}\in\hat{\mathfrak{L}}:\hat{f}\left(s\right)\sim1-cs,a\leq c\leq b\right\} $
, $B_{a}^{b}=\left\{ \hat{f}\in\hat{\mathfrak{L}}:\hat{f}\left(s\right)\sim1-c\psi\left(s\right),a\leq c\leq b\right\} $
is $\Phi_{\psi\left(s\right)}\left(A_{a}^{b}\right)$ weakly dense
in $B_{a}^{b}$? 

\begin{rem}
\label{rem:independent outer and inner processes}In the case where
$A_{t}$ and $E_{t}$ are independent, by the fact that L�vy process
are stochastically continuous we see that the $A_{E_{t}}$ and $\left(A_{E_{t}-}\right)^{+}$
have the same law. 
\end{rem}
\end{rem}
Remark \ref{rem:strict inclusion} underlines the possibly limited
range of measures in $\mathfrak{L}_{s}^{\psi}$ compared to $\mathfrak{L}_{\psi}$.
In order to extend the set $\mathfrak{L}_{s}^{\psi}$ we may use $\Phi_{\psi'}$
where $\psi'\left(s\right)\in\mathfrak{B}$ s.t $\psi'\left(s\right)\sim\psi L'\left(s^{-1}\right)$
where $L'\left(t\right)$ is slowly varying. As the product of two
slowly varying functions is a slowly varying function we must have
$\Phi_{\psi'}\left(\mathfrak{L}_{s}\right)\subset\mathfrak{L}_{\psi}$.
Indeed, if $\hat{f}\left(s\right)\sim1-sL\left(s^{-1}\right)$ where
$L\left(s^{-1}\right)$ is slowly varying then $\hat{f}\left(\psi'\left(s\right)\right)\sim1-\psi\left(s\right)L'\left(s^{-1}\right)L\left(\psi'\left(s\right)^{-1}\right)$
and $\hat{f}\left(\psi\left(s\right)\right)\in\mathfrak{L}_{\psi}$.
Define the set 
\[
\mathfrak{B}_{\psi}:=\left\{ \psi'\left(s\right)\in\mathfrak{B}:\psi'\left(s\right)\sim\psi\left(s\right)L\left(s^{-1}\right),L\text{ is slowly varying}\right\} ,
\]
and then define
\[
\mathfrak{L}_{s}^{\bar{\psi}}:=\cup_{\psi'\in\mathfrak{B}_{\psi}}\Phi_{\psi'}\left(\mathfrak{L}_{s}\right).
\]
\\
Note that the mapping $\Phi_{s^{\alpha}}$ reduces the ``regularity''
$s$ around $s=0$ for $\hat{f}\left(s\right)\in\mathfrak{\hat{L}}_{s}$
with the more coarse ``regularity'' $s^{\alpha}$. In order to maintain
general results we make the following assumption on $\psi\left(s\right)$. 
\begin{assumption}
\label{assu:regularity_0}We assume $\psi\left(s\right)$ satisfies
\begin{equation}
\lim_{s\rightarrow0^{+}}\frac{s}{\psi\left(s\right)}=0.\label{eq:less regularity}
\end{equation}
\end{assumption}
Note that due to the relation between the regularity of the LT $\hat{f}$
around zero and the moments of the distribution $f$ we see that if
$f\in\mathfrak{L}_{\psi}$ where $\psi$ satisfies (\ref{eq:less regularity})
then the first moment of $f$ is infinite. It turns out that the set
of distribution $\mathfrak{L}_{s}^{\bar{\psi}}$ is indeed rich in
$\mathfrak{L}_{\psi}$.
\begin{lem}
\label{lem:Pseudo_smooth_dense}Let $\psi\in\mathfrak{B}$ that satisfies
(\ref{eq:less regularity}).Then the set of distributions $\mathfrak{L}_{s}^{\bar{\psi}}$
is weakly dense in $\mathfrak{L}_{\psi}$.
\end{lem}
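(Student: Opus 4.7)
My approach is to construct, for every $f\in\mathfrak{L}_\psi$, a weakly convergent sequence $f_n\in\mathfrak{L}_s^{\bar\psi}$ with $f_n\Rightarrow f$, essentially by exploiting the freedom of varying $\psi'$ in the definition $\mathfrak{L}_s^{\bar\psi}=\bigcup_{\psi'\in\mathfrak{B}_\psi}\Phi_{\psi'}(\mathfrak{L}_s)$; the $n$-dependence of $\psi'$ will be essential, since Remark~\ref{rem:strict inclusion} observes that the analogous weak density statement for the fixed-symbol subclass $\Phi_\psi(\mathfrak{L}_s)$ is open. By the continuity theorem for Laplace transforms, this reduces to producing $\psi_n\in\mathfrak{B}_\psi$ and $g_n\in\mathfrak{L}_s$ with $\hat g_n(\psi_n(s))\to\hat f(s)$ pointwise on $(0,\infty)$; by \propref{time_changed_CTRW} each $f_n$ is then the law of $D^{\psi_n}_{T_n}$, where $T_n\sim g_n$ is independent of the subordinator $D^{\psi_n}$ of symbol $\psi_n$.

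Given $\hat f\in\hat{\mathfrak{L}}_\psi$ with $\hat f(s)\sim 1-\psi(s)L(s^{-1})$, my candidate for $\psi_n$ is a Bernstein function whose L\'evy measure is built via Karamata's Tauberian theorem so that $\psi_n(s)\sim\psi(s)L(s^{-1})$ up to a perturbation vanishing with $n$; \lemref{Slowly Varying result} then certifies $\psi_n\in\mathfrak{B}_\psi$. The formal choice of inner Laplace transform is $h_n(y):=\hat f(\psi_n^{-1}(y))$, and near $y=0$ one checks that $h_n(y)\sim 1-yL_n(y^{-1})$ with $L_n$ slowly varying, which places $h_n$ asymptotically in $\hat{\mathfrak{L}}_s$.

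The main obstacle is that $h_n$ is typically not globally completely monotone: $\psi_n$ and $\psi_n^{-1}$ both being Bernstein would force $\psi_n$ to be linear, which is forbidden by \aussuref{regularity_0}, so the composition $\hat f\circ\psi_n^{-1}$ is not automatically CM. To circumvent this I would approximate $h_n$ by genuine CM functions $\tilde h_n\in\hat{\mathfrak{L}}_s$ with $\tilde h_n(0^+)=1$ and $h_n-\tilde h_n\to 0$ uniformly on compacta of $(0,\infty)$; this can be done using the L\'evy--Khintchine representation of the Bernstein function $1-\hat f$ together with a mollification of its L\'evy measure and a rescaling that preserves the normalisation at the origin. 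Each $\tilde h_n$ is then the Laplace transform of some $T_n\in\mathfrak{L}_s$, so $\Phi_{\psi_n}(T_n)\in\mathfrak{L}_s^{\bar\psi}$ has Laplace transform $\tilde h_n(\psi_n(s))$, which converges pointwise to $\hat f(s)$ because $h_n\circ\psi_n=\hat f$ exactly. A diagonal argument on $n$, combined with \lemref{Injection_result} to control the slowly varying factors arising from composition with $\psi_n^{-1}$, then produces the required sequence and completes the proof.
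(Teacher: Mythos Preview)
Your approach contains a genuine gap, and it is also substantially more complicated than the paper's argument.

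The gap is in the step where you replace $h_n=\hat f\circ\psi_n^{-1}$ by a completely monotone $\tilde h_n$. You need three things simultaneously: (a) $\tilde h_n$ is CM with $\tilde h_n(0^+)=1$; (b) $\tilde h_n\to h_n$ on compacta of $(0,\infty)$; and (c) $\tilde h_n\in\hat{\mathfrak L}_s$, i.e.\ $\tilde h_n(y)\sim1-yL_n(y^{-1})$ as $y\to0$ with $L_n$ slowly varying. Condition (c) is an asymptotic at the origin, and compact approximation on $(0,\infty)$ gives you no control there; a mollification of the L\'evy measure of $1-\hat f$ (which is simply the law $f$ of $Y$) does not obviously interact with the composition by $\psi_n^{-1}$ in a way that preserves or produces this asymptotic. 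You also never make precise how $n$ enters $\psi_n$ or why a diagonal argument is needed; as written the construction is a sketch rather than a proof.

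The paper sidesteps the CM obstruction entirely by making the opposite choice: instead of pushing $\psi_n$ toward a fixed heavy-tailed symbol and inverting, it pushes $\psi_n$ toward the \emph{identity}. Concretely, with $Y\sim f\in\mathfrak L_\psi$ and $\mu_n=\mathbb E(Y\mathbf 1_{[0,n]})$, set
\[
g_n=\text{law of }Y\mathbf 1_{[0,n]},\qquad \psi_n(s)=s+\mu_n^{-1}\bigl(1-\hat f(s)\bigr).
\]
Then $g_n\in\mathfrak L_s$ is automatic (bounded support), $\psi_n\in\mathfrak B_\psi$ because $\psi_n(s)\sim\mu_n^{-1}\psi(s)L(s^{-1})$ by \aussuref{regularity_0}, and since $\mu_n\to\infty$ one has $\psi_n(s)\to s$ pointwise. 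Hence $\hat g_n(\psi_n(s))\to\hat f(s)$ directly from $g_n\Rightarrow f$ and continuity of $\hat f$. No inversion of $\psi_n$, no CM repair, no diagonal argument. The trick you are missing is that the $\psi'$ in $\mathfrak B_\psi$ is allowed to carry an arbitrary slowly varying prefactor, so one can scale it down to make $\Phi_{\psi_n}$ asymptotically the identity map.
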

\begin{proof}
Let $Y\in\mathfrak{L}_{\psi},$ that is, $\mathbb{E}\left(e^{-sY}\right)\sim1-\psi\left(s\right)L\left(s^{-1}\right)$.
Define $Y_{n}=Y1_{[0,n]}$ and note that $Y_{n}\in\mathfrak{L}_{s}$.
Next define 
\begin{equation}
\psi_{n}\left(s\right)=s+\mu_{n}^{-1}\int_{0}^{\infty}\left(1-e^{-sy}\right)f\left(dy\right),\label{eq:Pseudo-smooth dense-2}
\end{equation}
where $\mu_{n}=\mathbb{E}\left(Y_{n}\right)$ and $f\left(dy\right)$
is the distribution of $Y$. Since $\psi$ satisfies (\ref{eq:less regularity})
we see that $\mu_{n}\rightarrow\infty$ by monotone convergence. It
follows that 
\begin{equation}
\psi_{n}\left(s\right)\rightarrow s,\label{eq:Pseudo-smooth dense}
\end{equation}
for every $s>0$. Moreover, denote by $\bar{f}\left(t\right)=\int_{t}^{\infty}f\left(dy\right)$
the tail of the distribution $f\left(dy\right)$. It is straightforward
to verify that $\hat{\bar{f}}\left(s\right):=\int e^{-sy}\bar{f}\left(y\right)dy=\frac{1-\hat{f}\left(s\right)}{s}$
and therefore that $\hat{\bar{f}}\left(s\right)\sim s^{-1}\psi\left(s\right)L\left(s^{-1}\right)$.
Using integration by parts in (\ref{eq:Pseudo-smooth dense-2}) we
see that for every $n$
\[
\psi_{n}\left(s\right)\sim s+\mu_{n}^{-1}\psi\left(s\right)L\left(s^{-1}\right).
\]
Let $f_{n}$ be the distribution of $Y_{n}$. Since $\hat{f}_{n}\left(s\right)\sim1-\mu_{n}s$,
it follows by (\ref{eq:less regularity}) that
\begin{equation}
\hat{f}_{n}\left(\psi_{n}\left(s\right)\right)\sim1-\psi\left(s\right)L\left(s^{-1}\right),\label{eq:Pseudo-smooth dense-1}
\end{equation}
 in particular, $\Phi_{\psi_{n}}f_{n}\in\mathfrak{L}_{\psi}$. It
is left to show that $\Phi_{\psi_{n}}f_{n}\rightarrow f$ as $n\rightarrow\infty$.
But this follows easily form (\ref{eq:Pseudo-smooth dense}) and the
fact that $Y_{n}$ converges weakly to $Y$. 
\end{proof}
\begin{rem}
The reason why we did not use $\psi_{n}=s+\mu_{n}^{-1}\psi\left(s\right)$
instead of the form in (\ref{eq:Pseudo-smooth dense-2}) is that the
form in (\ref{eq:Pseudo-smooth dense-2}) has an advantage when $\psi\left(s\right)=s^{\alpha}L\left(s^{-1}\right)$
where $L$$\left(t\right)$ is slowly varying. Indeed, by Karamata's
Theorem we know that $\mathbb{E}\left(e^{-sY}\right)\sim1-s^{\alpha}L\left(s^{-1}\right)\Gamma\left(1-\alpha\right)$
is equivalent to $\mathbb{P}\left(Y>t\right)\sim t^{-\alpha}L\left(t\right)$
. It follows that for every $n$ our approximation $\Phi_{\psi_{n}}Y_{n}$
of $Y$ satisfies 
\begin{equation}
\lim_{t\rightarrow\infty}\frac{\mathbb{P}\left(Y>t\right)}{\mathbb{P}\left(\Phi_{\psi_{n}}Y_{n}>t\right)}=1.\label{eq:same tail}
\end{equation}
Equation \ref{eq:same tail} will be utilized in the sequel in order
to obtain quantitative results. 
\end{rem}
\begin{prop}
\label{prop:almost_every_CTRW_is_time_changed_CTRW}Let $Y_{t}^{n}$
be the CTRW associated with the i.i.d space-time jumps $\left(J_{i}^{n},a_{n}W_{i}\right)$
where $W_{1}\in\mathfrak{L}_{s}^{\overline{s^{\alpha}}}$ . Then there
exists a CTRW $X_{t}^{n}$ associated with the i.i.d space-time jumps
$\left(J_{i}^{n},n^{-1}U_{i}\right)$ s.t $U_{1}$ has finite mean,
and a sequence of inverse-subordinators $E_{t}^{n}$ independent of
$\left\{ U_{i}\right\} $ so that for every $n$
\[
Y_{t}^{n}\overset{J_{1}}{\sim}\left(X_{E_{t}^{n}-}^{n}\right)^{+},
\]
and s.t $E_{t}^{n}$ converges in law w.r.t the $J_{1}$-topology
to $E_{t}$, the inverse of a stable subordinator of index $\alpha$. 
\end{prop}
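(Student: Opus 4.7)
The strategy will be to exploit the explicit decomposition of $W_1$ provided by membership in $\mathfrak{L}_s^{\overline{s^\alpha}}$ and apply \propref{time_changed_CTRW} at each scale $n$ with a carefully chosen Bernstein function $\psi_n$.

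First I would unpack the definition $\mathfrak{L}_s^{\overline{s^\alpha}}=\cup_{\psi'\in\mathfrak{B}_{s^\alpha}}\Phi_{\psi'}(\mathfrak{L}_s)$, fixing some $\psi'\in\mathfrak{B}$ with $\psi'(s)\sim s^\alpha L(s^{-1})$ ($L$ slowly varying) and some $V\in\mathfrak{L}_s$ such that $W_1\overset{d}{=}\Phi_{\psi'}(V)$, i.e.\ $\hat{W}(s)=\hat{V}(\psi'(s))$. I will take $\{U_i\}$ to be i.i.d.\ copies of $V$, with the understanding that a finite-mean representative can be produced via a truncation-and-compensation step modeled on the proof of \lemref{Pseudo_smooth_dense}; such a modification alters $\psi'$ only by a slowly varying factor and thus preserves membership in $\mathfrak{B}_{s^\alpha}$.

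The key definition is $\psi_n(s):=n\psi'(a_n s)$, which inherits the Bernstein property from $\psi'$. The identity
\[
\mathbb{E}\bigl(e^{-sa_{n}W_{1}}\bigr)=\hat{V}\bigl(\psi'(a_{n}s)\bigr)=\hat{V}\bigl(n^{-1}\psi_{n}(s)\bigr)
\]
realizes the law of $a_n W_i$ as $\Phi_{\psi_n}$ applied to the law of $n^{-1}U_i$. In particular $a_n W_i\in\mathfrak{L}_s^{\psi_n}$ with inner factor $n^{-1}U_i$, so a direct application of \propref{time_changed_CTRW} with $\psi=\psi_n$ produces the promised CTRW $X_t^n$ with space-time jumps $(J_i^n, n^{-1}U_i)$, an inverse subordinator $E_t^n$ of symbol $\psi_n$ independent of $\{U_i\}$, and the identity $Y_t^n\overset{J_1}{\sim}(X^n_{E_{t}^{n}-})^{+}$.

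It then remains to establish $E_t^n\overset{J_1}{\Rightarrow}E_t$, the inverse of a stable subordinator of index $\alpha$. By continuity of the inverse map on $\mathbb{D}_{\uparrow\uparrow}$ (cf.\ \cite{straka2011}) this reduces to pointwise convergence of L\'evy symbols $\psi_n(s)\to Cs^\alpha$ for some fixed $C>0$. Using $\psi'(s)\sim s^\alpha L(s^{-1})$,
\[
\psi_n(s)=n\psi'(a_n s)\sim s^\alpha\cdot na_n^{\alpha}L\bigl(a_n^{-1}s^{-1}\bigr),
\]
and the normalizing relation \eqref{eq:the sequence a_n}, together with the Karamata--Tauberian link between $L$ and the tail of $W_1$, will yield $na_n^{\alpha}L(a_n^{-1}s^{-1})\to C$ independent of $s$. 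The principal technical hurdle I anticipate is guaranteeing that the inner factor $V$ in the decomposition $W_1=\Phi_{\psi'}(V)$ can always be taken with finite mean (since $\mathfrak{L}_s$ only requires the truncated mean to be slowly varying); once this is granted, the remaining ingredients are essentially a packaging of previously established results.
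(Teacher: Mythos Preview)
Your proposal is correct and follows essentially the same route as the paper. Your definition $\psi_n(s)=n\psi'(a_ns)$ is, once expanded in terms of characteristics $(0,b,\mu)$, exactly the paper's $\psi_n(s)=sna_nb+\int(1-e^{-sy})\,n\mu(a_n^{-1}dy)$, and both proofs then invoke \propref{time_changed_CTRW} at each $n$; the only cosmetic difference is that the paper argues convergence $E^n\Rightarrow E$ via vague convergence of L\'evy measures (plus $na_nb\to 0$) and then applies \lemref{Straka_and_Henry} with the pair $(n^{-1},D_t^n)$, whereas you phrase it as pointwise convergence of symbols followed by continuity of inversion, which amounts to the same thing. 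Your identified ``principal technical hurdle'' (that $V\in\mathfrak{L}_s$ need not have finite mean) is precisely what the paper addresses, somewhat tersely, by stipulating that one use the specific $\psi$ furnished by \lemref{Pseudo_smooth_dense}, whose inner factor $Y1_{[0,n]}$ is bounded.
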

\begin{proof}
Since $W_{1}\in\mathfrak{L}_{s}^{\overline{s^{\alpha}}}$, there exists
$\psi\in\mathfrak{B}_{s^{\alpha}}$(we shall use the one in \lemref{Pseudo_smooth_dense})
s.t $W_{1}\in\mathfrak{L}_{s}^{\psi}$. By \propref{time_changed_CTRW}
we see that 
\[
Y_{t}^{1}=\left(X_{E_{t}^{1}-}^{1}\right)^{+},
\]
where $E_{t}^{1}$ is the inverse-subordinator of symbol $\psi_{1}=sa_{n}b+\int_{0}^{\infty}\left(1-e^{-sa_{n}y}\right)\mu\left(dy\right)$
and $X_{t}^{1}$ is a CTRW associated with i.i.d space-time jumps
$\left(J_{i}^{1},U_{i}\right)$ with $U_{1}\in\mathfrak{L}_{s}$.
By Proposition \ref{prop:time_changed_CTRW} it is enough to show
that 
\begin{equation}
a_{n}W_{1}\sim\Phi_{\psi_{n}}\left(n^{-1}U_{1}\right),\label{eq:main thm-2}
\end{equation}
where $\psi_{n}$ is the symbol of a strictly increasing subordinator.
Looking at the Laplace Transform of $a_{n}W_{1}$ we see that 
\begin{align}
\mathbb{E}\left(e^{-sa_{n}W_{1}}\right) & =\mathbb{E}\left[e^{-U_{1}\left(sa_{n}b+\int_{0}^{\infty}\left(1-e^{-sa_{n}y}\right)\mu\left(dy\right)\right)}\right]\nonumber \\
 & =\mathbb{E}\left[e^{-n^{-1}U_{1}\left(sa_{n}nb+\int_{0}^{\infty}\left(1-e^{-sy}\right)n\mu\left(a_{n}^{-1}dy\right)\right)}\right],\label{eq:pseudo_self_similarity}
\end{align}
which implies (\ref{eq:main thm-2}) with $\psi_{n}\left(s\right)=sna_{n}b+\int_{0}^{\infty}\left(1-e^{-sy}\right)n\mu\left(a_{n}^{-1}dy\right)$.
Letting $E_{t}^{n}$ be the inverse of a strictly increasing subordinator
of symbol $\psi_{n}$ and invoking again \propref{time_changed_CTRW}
we see that 
\[
Y_{t}^{n}=\left(X_{E_{t}^{n}-}^{n}\right)^{+}.
\]
We are left to show that $E_{t}^{n}$ converges in law to $E_{t}$,
the inverse of a stable subordinator of index $\alpha$. To see that,
first note that by the definition of $\psi_{1}$ and Karamata's Theorem
we know that $\bar{\mu}_{1}\left(y\right)\sim L\left(y\right)y^{-\alpha}$.
Let $h\left(y\right)$ be a smooth function with compact support $[a,b]\subset\mathbb{R}_{+}/\left\{ 0\right\} $,
then by (\ref{eq:the sequence a_n})
\begin{align*}
\lim_{n\rightarrow\infty}\mathbb{\int}_{0}^{\infty}h\left(y\right)n\mu_{1}\left(a_{n}^{-1}dy\right) & =\lim_{n\rightarrow\infty}\mathbb{\int}_{a}^{b}\frac{\partial h\left(y\right)}{\partial y}n\bar{\mu}_{1}\left(a_{n}^{-1}y\right)dy\\
 & =\mathbb{\int}_{a}^{b}\frac{\partial h\left(y\right)}{\partial y}\frac{y^{-\alpha}}{\Gamma\left(1-\alpha\right)}dy,
\end{align*}
and form the fact that $a_{n}n\rightarrow0$ ($a_{n}$ is regularly
varying with parameter $-\frac{1}{\alpha}$) we see that $\mu_{n}$
converges vaguely to $\mu\left(dy\right)=\frac{\alpha}{\Gamma\left(1-\alpha\right)}y^{-\alpha-1}dy$,
the L�vy measure of a standard stable subordinator of index $\alpha$.
However, convergence of characteristics of Feller processes implies
weak convergence of their law in the $J_{1}$ topology. In other words,
if $D_{t}^{n}$ is the subordinator whose symbol is $\psi_{n}$ we
see that $D_{t}^{n}\overset{J_{1}}{\Rightarrow}D_{t}$ . Next we use
\lemref{Straka_and_Henry} with $\left(n^{-1},D_{t}^{n}\right)$ to
obtain 
\[
E^{n}\overset{J_{1}}{\Rightarrow}E,
\]
This completes the proof. 
\end{proof}
\propref{almost_every_CTRW_is_time_changed_CTRW} can be understood
in the following way; let $A_{t}$ be an increasing process and let
$A\left(\omega,T\right)$ be the regenerative set of $A_{t}$ in the
interval $[0,T]$ (we may also consider $[0,\infty)$). That is,
\[
A\left(\omega,T\right)=\left\{ u\in[0,T]:A_{u-\epsilon}\left(\omega\right)<A_{u}\left(\omega\right)<A_{u+\epsilon}\left(\omega\right),\forall\epsilon>0\right\} .
\]
 Note that the mapping $\text{\ensuremath{\Phi}}_{\psi}$ can be viewed
as a mapping on processes. Let $X_{t}$ be a process, then we define
\[
\text{\ensuremath{\Phi}}_{\psi}\left(X_{t}\right)=X_{E_{t}},
\]
where $E_{t}$ is the inverse-subordinator of symbol $\psi$ independent
of $X_{t}$. Moreover, $\Phi_{\psi}$ can also be viewed as a mapping
between regenerative set-valued random variables. That is, conditioned
on $E\left(\omega,\infty\right)$, $\text{\ensuremath{\Phi}}_{\psi}\left(X_{t}\right)\left(\cdot,\infty\right)$
is a random regenerative set contained in $E\left(\omega,\infty\right)$.
Lastly, note that conditioned on $E_{t}\left(\omega\right)=\xi$,
$\Phi_{\psi}$ can be viewed as a function $\Phi_{\psi,\xi}:\mathfrak{L}\rightarrow\mathfrak{L}$.
If $U\in\mathfrak{L}$ has distribution $\mu$ 
\[
\Phi_{\psi,\xi}\left(\mu\right)\sim\xi_{U}^{-1}.
\]
$\Phi_{\psi,\xi}$ sends measures in $\mathfrak{L}$ to measures whose
support is in the regenerative set of $\xi$. Let $\mu\in\mathfrak{L}$,
and let $f_{t}$ be a time-change, then we define the probability
measure $\mu_{f}$ on Borel sets of $\mathbb{R}_{+}$ to be 
\begin{equation}
\mu_{f}\left(A\right)=\mu\left(A_{f^{-1}}\right),\label{eq:mu_f}
\end{equation}
for every Borel set in $A\subset\mathbb{R}_{+}$, where for an increasing
$f$ $A_{f}$ is the set 
\[
A_{f}=\left\{ x\in\mathbb{R}:f\left(x\right)\in A\right\} .
\]
We have $\Phi_{\psi,\xi}\left(\mu\right)=\mu_{\xi}\left(dx\right)$.
If $U_{1}$ has finite mean then by the SLLN of Renewal Theory we
know that with probability one the regenerative points of the CTRW
$T_{t}^{n}$ associated with the space-time jumps $\left(1,n^{-1}U_{i}\right)$
'converge' to a set that is dense in $[0,T]$, namely 
\[
T\left(\omega,T\right)=\cup_{n}T^{n}\left(\omega,T\right).
\]
Since $D_{t}\left(\omega\right)$ is right continuous we deduce that
the mapping $\Phi_{\psi}$ is 'continuous' (if $x_{n}\in T\left(\omega,T\right)$
s.t $x_{n}>x$ and $x_{n}\rightarrow x$ then $D_{x_{n}}\rightarrow D_{x}$)
and $E\left(\omega,T\right)$ is a perfect set (closed, with no isolated
points). It follows that $\Phi_{\psi}\left(T\left(\omega,T\right)\right)$
is dense in $E\left(\omega,T\right)$. In other words, as $n\rightarrow\infty$
the trajectory $E_{t}\left(\omega\right)$ is delineated by the regenerative
points of $T_{t}$. This idea holds more generally. Let $f\in\mathfrak{L}$
and let $\left\{ U_{i}\right\} $ be i.i.d r.vs with distribution
$f$. Let us define $T_{0}=0$ and
\[
T_{n}=\sum_{i=1}^{n}U_{i}.
\]
We say that $f$ is \emph{relatively stable }(\cite[8.8]{bingham1989regular})if
there exist norming constants $a_{n}$ s.t 
\[
a_{n}T_{n}\rightarrow1,
\]
where convergence is in probability. Next define the renewal process
\[
N_{t}=\max\left\{ k:T_{k}\leq t\right\} .
\]
Define the \emph{residual lifetime} $Z_{t}$ and the \emph{aging}
$Y_{t}$ by 
\begin{align*}
Y_{t} & =t-T_{N_{t}}\\
Z_{t} & =T_{N_{t}+1}-t.
\end{align*}
Finally, we let $a_{n}>0$ be any sequence s.t 
\[
1-\hat{f}\left(a_{n}\right)\sim n^{-1}.
\]
 The following is known \cite[Theorem 8.8.1]{bingham1989regular}. 
\begin{lem}
\label{lem:relative_stability}Let $f\in\mathfrak{L}$ and let $Y_{t}$
and $Z_{t}$ be the aging and the residual lifetime processes associated
with $f$. The following are equivalent:
\begin{enumerate}
\item $f\in\mathfrak{L}_{s}$.
\item $f$ is relatively stable.
\item $\frac{Y_{t}}{t}\rightarrow0$ in probability.
\item $\frac{Z_{t}}{t}\rightarrow0$ in probability.
\end{enumerate}
\end{lem}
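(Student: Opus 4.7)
The plan is to prove the four-way equivalence by a cyclic argument, treating (1) $\Leftrightarrow$ (2) as the analytic heart and then using the renewal structure for the rest. First, I would invoke the characterization (\ref{eq:characterization of the first layer}), which identifies $\mathfrak{L}_s$ with those distributions whose truncated mean $m(t):=\int_0^t y f(dy)$ is slowly varying, this being just the Karamata Tauberian version of $\hat{f}(s) \sim 1 - sL(s^{-1})$. Given this, (1) $\Leftrightarrow$ (2) reduces to Feller's weak law of large numbers for triangular arrays: choosing $a_n$ so that $n\, m(a_n^{-1})\sim a_n^{-1}$, the fact that $m$ is slowly varying forces the tail contribution $n\,\bar f(a_n^{-1})\to 0$ and gives $a_n T_n \to 1$ in probability; conversely, relative stability implies precisely this truncation condition, hence slow variation of $m$.

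For (2) $\Rightarrow$ (3) and (2) $\Rightarrow$ (4), I would exploit the standard inversion between $T_n$ and the renewal counting process $N_t$: if $a_n T_n \to 1$, then for $n(t)$ defined by $a_{n(t)}^{-1}\sim t$ one gets $N_t/n(t)\to 1$ in probability. Since $T_{N_t}\le t <T_{N_t+1}$, it suffices to control $T_{N_t+1}-T_{N_t}=W_{N_t+1}$ and $t-T_{N_t}$ on the scale of $t$. The key estimate is that under slow variation of $m$ no single summand dominates: $\mathbb{P}(\max_{k\le n}W_k > \varepsilon a_n^{-1}) \le n\bar f(\varepsilon a_n^{-1}) \to 0$ by slow variation, and this exact mechanism shows that both $Y_t/t$ and $Z_t/t$ tend to $0$ in probability.

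For the reverse implications (3) $\Rightarrow$ (2) and (4) $\Rightarrow$ (2) I would reason as follows. From $Y_t/t\to 0$ one obtains $T_{N_t}/t\to 1$ in probability, and then along the subsequence $t=a_n^{-1}$ (with $a_n$ chosen as above from the hypothesis) this translates back into $a_n T_{N_{a_n^{-1}}}\to 1$; a comparison between $T_{N_{a_n^{-1}}}$ and $T_n$ using $N_t\sim n(t)$ yields $a_n T_n \to 1$. The case starting from (4) is entirely analogous because $Z_t-Y_t=W_{N_t+1}$ is a single renewal increment, whose smallness relative to $t$ is forced by either $Y_t/t\to 0$ or $Z_t/t\to 0$ via the same max-of-summands bound.

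The main obstacle is the maximal-term control: proving that under slow variation of $m(t)$ the largest waiting time up to the $n$th renewal is negligible compared to $a_n^{-1}$, and conversely that $Y_t/t\to 0$ or $Z_t/t\to 0$ force this same negligibility. Everything else is bookkeeping with Karamata's theorem and the identity $T_{N_t}\le t<T_{N_t+1}$; the genuine content lives in the interplay between slow variation of the truncated mean and the absence of a dominant term, which is the essence of Feller's weak law and the reason why relative stability, analytic regularity of $\hat f$ at $0$, and small residual/age are one and the same phenomenon.
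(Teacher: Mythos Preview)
The paper does not prove this lemma at all: it is stated as a known result and attributed to \cite[Theorem 8.8.1]{bingham1989regular}. So there is no ``paper's proof'' to compare against; your sketch is essentially a reconstruction of the argument one finds in Bingham--Goldie--Teugels, and for the implications $(1)\Leftrightarrow(2)$ and $(2)\Rightarrow(3),(4)$ your outline is correct and standard.

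There is, however, a genuine circularity in your argument for $(3)\Rightarrow(2)$ (and likewise $(4)\Rightarrow(2)$). From $Y_t/t\to 0$ you correctly get $T_{N_t}/t\to 1$, but to pass from this to $a_nT_n\to 1$ you invoke ``$N_t\sim n(t)$'', which is precisely the renewal-theoretic inversion of relative stability --- the conclusion you are trying to reach. Without already knowing $(2)$, you have no control relating the random index $N_t$ to the deterministic sequence $n(t)$, and the ``same max-of-summands bound'' you appeal to also presupposes slow variation of $m(t)$, i.e.\ $(1)$. The way Bingham--Goldie--Teugels close this loop is not by the comparison you describe but via the Dynkin--Lamperti theory: one shows directly that $Y_t/t$ (or $Z_t/t$) having a limit in distribution forces $\bar f$ to be regularly varying of some index $-\alpha\in[-1,0]$, and the degenerate limit at $0$ singles out the boundary case where $m(t)=\int_0^t y\,f(dy)$ is slowly varying, i.e.\ $(1)$. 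If you want a self-contained proof, that is the missing ingredient for the reverse implications.
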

Define $i_{f}^{n}=\sup\left\{ i:T_{i}^{n}\leq T\right\} $ and the
set 
\begin{equation}
A_{\delta,T}^{n}=\left\{ \omega:\sup_{1\leq i\leq i_{f}^{n}}\left|T_{i}^{n}-T_{i-1}^{n}\right|<\delta\right\} .\label{eq:A_delta}
\end{equation}
The set $A_{\delta,T}^{n}$ is the event that one can not find two
consecutive regenerative points whose distance is larger than $\delta$.
We shall need the next lemma. 
\begin{lem}
\label{lem:A_delta}Let $f\in\mathfrak{L}_{s}$ and $T_{i}^{n}=a_{n}\sum_{j=1}^{i-1}U_{j}$
where $\left\{ U_{i}\right\} $ are i.i.d and $U_{1}\sim f$. Then
for every $\delta>0$ $\mathbb{P}\left(A_{\delta}^{n}\right)\rightarrow1$
as $n\rightarrow\infty$. 
\end{lem}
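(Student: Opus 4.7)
The plan is to control, separately, (i) the (random) number $i_f^n$ of regenerative points occurring before time $T$, and (ii) the size of the largest individual increment $a_n U_{i-1}$, and then combine these by a union bound.

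For (i), because $f\in\mathfrak{L}_s$ we have $1-\hat f(s)\sim sL(s^{-1})$ with $L$ slowly varying, and the definition of $a_n$ forces $a_n L(a_n^{-1})\sim n^{-1}$. De~Bruijn inversion makes $1/a_n$ regularly varying of index $1$, so $a_n/a_{\lfloor Kn\rfloor}\to K$ for every $K>0$. Combined with the relative stability $a_n T_n\to 1$ in probability supplied by \lemref{relative_stability}, this gives
\[
a_n T_{\lfloor Kn\rfloor}\;=\;\frac{a_n}{a_{\lfloor Kn\rfloor}}\cdot a_{\lfloor Kn\rfloor}T_{\lfloor Kn\rfloor}\;\xrightarrow{\;\mathbb{P}\;}\;K.
\]
Fixing any integer $K>T$ therefore yields $\mathbb{P}(i_f^n\geq\lfloor Kn\rfloor)=\mathbb{P}(T^n_{\lfloor Kn\rfloor}\leq T)\to 0$.

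For (ii), the relative stability of the nonnegative i.i.d.\ sequence $\{U_i\}$ is classically equivalent to the negligibility of the maximum term in its partial sums, namely $a_n\max_{1\leq i\leq n}U_i\to 0$ in probability, see \cite[Thm.~8.8.1]{bingham1989regular}. Equivalently, for every $\delta>0$,
\[
n\,\bar f(\delta/a_n)\;\longrightarrow\;0,\qquad \bar f(t):=\mathbb{P}(U_1>t).
\]

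A union bound now assembles (i) and (ii): for any $\delta>0$ and any integer $K>T$,
\begin{align*}
\mathbb{P}\bigl((A^n_{\delta,T})^c\bigr)
&\leq \mathbb{P}\bigl(i_f^n\geq\lfloor Kn\rfloor\bigr)+\mathbb{P}\Bigl(\max_{1\leq i\leq\lfloor Kn\rfloor}a_n U_i\geq\delta\Bigr)\\
&\leq \mathbb{P}\bigl(i_f^n\geq\lfloor Kn\rfloor\bigr)+Kn\,\bar f(\delta/a_n),
\end{align*}
and both summands vanish by (i) and (ii), giving $\mathbb{P}(A^n_{\delta,T})\to 1$. The only non-routine ingredient is the max-negligibility step (ii); this is however a standard consequence of relative stability and already within the scope of \cite{bingham1989regular} used throughout the paper, so no real obstacle arises.
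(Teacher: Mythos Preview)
Your proof is correct, but it follows a genuinely different route from the paper's.

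The paper argues via the residual-lifetime characterisation of relative stability (item~(4) in \lemref{relative_stability}): it lays down a dyadic grid $\{t_k\}$ on $[0,T]$ with mesh $2^{-m}<\delta/2$, observes that $\mathbb{P}(Z^n_{t_k}>2^{-m})$ can be made uniformly small for large $n$, and notes that if the residual lifetime at every grid point is at most $2^{-m}$ then each interval $(t_k,t_{k+1}]$ contains a renewal, forcing all gaps to be below $2\cdot 2^{-m}\le\delta$. A union bound over the finitely many grid points finishes the argument.

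Your approach instead controls the two natural ingredients directly: the total number of jumps in $[0,T]$ via the regular variation of $a_n$ combined with $a_nT_n\to 1$, and the size of the largest single increment via the max--negligibility equivalent of relative stability, then combines them by a union bound over at most $Kn$ summands. This is a cleaner decomposition and avoids the grid construction entirely. The only caveat is that the max--negligibility condition $n\bar f(\delta/a_n)\to0$ is not among the four equivalences the paper actually records in \lemref{relative_stability}; it is, however, another classical equivalent (it follows from $t\bar f(t)=o\bigl(\int_0^t\bar f(y)\,dy\bigr)$ for monotone $\bar f$ with slowly varying integral, e.g.\ \cite[Prop.~1.5.9a]{bingham1989regular}), so your citation to \cite{bingham1989regular} is appropriate even if the precise theorem number may differ. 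The paper's proof has the minor advantage of being entirely self-contained given only the residual-lifetime statement it has already quoted; yours is more transparent about why the result should hold.
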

\begin{proof}
Assume w.l.o.g that $T=1$. Define the set $\left\{ t_{k}\right\} _{k=0}^{2^{m}-1}$
where $t_{k}=2^{-m}k$ where $m=\left\lceil \log_{2}\delta^{-1}\right\rceil +1$.
If $N_{t}^{n}$ is the renewal process of $\left\{ T_{i}^{n}\right\} $
and $Z_{t}^{n}=T_{N_{t}^{n}+1}-t$ is its residual lifetime, then
for every $\epsilon>0$, by \lemref{relative_stability}, we have
for large enough $n,$
\[
\mathbb{P}\left(Z_{t_{k}}^{n}>2^{-m}\right)<2^{-m}\epsilon\qquad\forall t_{k},1\leq k<2^{m}.
\]
It is left to note that $\left\{ \cup_{1\leq k<2^{m}}\left\{ Z_{t_{k}}>\nicefrac{\delta}{2}\right\} \right\} ^{c}\subset A_{\delta,T}^{n}$.
\end{proof}
\begin{prop}
\label{prop:general_scheme}Assume 
\begin{equation}
\left(A_{t}^{n},D_{t}^{n}\right)\overset{J_{1}}{\Rightarrow}\left(A_{t},D_{t},\right)\label{eq:general scheme-4}
\end{equation}
where $D_{t}$ is a.s strictly increasing. Let $\left\{ U_{i}\right\} $
be i.i.d r.vs independent of the sequence $\left(A_{t}^{n},D_{t}^{n}\right)$
where $U_{1}\in\mathfrak{L}_{s}$. Let $T_{i}^{n}=a_{n}\sum_{j=1}^{i}U_{j}$
be the renewal epoch and let $\left\{ X_{t}^{n}\right\} _{n=1}^{\infty}$
be the CTRWs associated with the space-time jumps 
\[
\left\{ J_{i}^{n},W_{i}^{n}\right\} =\left\{ A_{T_{i}^{n}}^{n}-A_{T_{i-1}^{n}}^{n},D_{T_{i}^{n}}^{n}-D_{T_{i-1}^{n}}^{n}\right\} .
\]
Then
\begin{equation}
X_{t}^{n}\overset{J_{1}[0,T]}{\Rightarrow}\left(A_{E_{t}-}\right)^{+}\label{eq:general scheme-2}
\end{equation}
where $E_{t}$ is the generalized inverse of $D_{t}$. 
\end{prop}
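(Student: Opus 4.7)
My plan is to invoke Lemma \ref{lem:Straka_and_Henry} with the renewal counting process $N^n_t := \sup\{k : T^n_k \leq t\}$. Under this choice the aggregated partial sums of spatial and temporal jumps are
\[
\sum_{i=1}^{N^n_t} J^n_i = A^n_{T^n_{N^n_t}}, \qquad \sum_{i=1}^{N^n_t} W^n_i = D^n_{T^n_{N^n_t}},
\]
so that verifying the hypothesis of Lemma \ref{lem:Straka_and_Henry} reduces to establishing the joint $J_1$-convergence of $(A^n_{T^n_{N^n_\cdot}}, D^n_{T^n_{N^n_\cdot}})$ to $(A_t, D_t)$ on $[0,T]$. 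Since $D_t$ is almost surely strictly increasing by hypothesis, Lemma \ref{lem:Straka_and_Henry} then delivers $X^n_t \overset{J_1[0,T]}{\Rightarrow} \mathcal{H}(A_t, D_t) = (A_{E_t-})^+$, which is the desired conclusion.

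The only ingredient needed is that $T^n_{N^n_t}$ is uniformly close to $t$ on $[0,T]$. Because $T^n_{N^n_t} \leq t \leq T^n_{N^n_t + 1}$, we have
\[
\sup_{t \in [0,T]} |T^n_{N^n_t} - t| \leq \sup_{i : T^n_i \leq T} (T^n_i - T^n_{i-1}),
\]
and Lemma \ref{lem:A_delta} (whose hypothesis $U_1 \in \mathfrak{L}_s$ holds and is equivalent, via Lemma \ref{lem:relative_stability}, to the relative stability driving that estimate) yields $\mathbb{P}(\sup_{t \leq T} |T^n_{N^n_t} - t| < \delta) \to 1$ for every $\delta > 0$. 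I would then use Skorokhod's representation to upgrade the hypothesis $(A^n, D^n) \overset{J_1}{\Rightarrow} (A, D)$ to almost-sure $J_1$-convergence on a common probability space, preserving the independence of $(A^n, D^n)$ from $\{U_i\}$; selecting homeomorphisms $\mu^n \in \Lambda$ that witness this convergence and combining them with a further perturbation that absorbs the uniformly $\delta$-small step function $t \mapsto T^n_{N^n_t}$ produces homeomorphisms $\lambda^n \to \mathrm{id}$ under which the composed processes converge uniformly to $(A, D)$, supplying the joint convergence required by Lemma \ref{lem:Straka_and_Henry}.

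The main technical obstacle is this absorption step: $t \mapsto T^n_{N^n_t}$ is piecewise constant rather than a homeomorphism, so the standard $J_1$ composition continuity does not apply verbatim. The remedy is that jumps of $A^n$ and $D^n$ falling inside a single interval $(T^n_{i-1}, T^n_i]$ get snapped to $T^n_i$ under the composition; because the limits $(A, D)$ are right-continuous with left limits, hence have at most countably many jump times, and snapping displaces each jump by at most $\delta$, the $\mu^n$ can be adjusted by $O(\delta)$ in the sup norm to realign the relocated jumps, and letting $\delta \to 0$ along a diagonal subsequence closes the argument.
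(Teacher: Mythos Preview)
Your overall strategy coincides with the paper's: both reduce to showing that the step-sampled pair
\[
f^n_t:=\bigl(A^n_{T^n_{N^n_t}},\,D^n_{T^n_{N^n_t}}\bigr)
\]
converges to $(A_t,D_t)$ in $J_1$, after which Lemma~\ref{lem:Straka_and_Henry} finishes the job. Your use of Lemma~\ref{lem:A_delta} to get $\sup_{t\le \tilde T}|T^n_{N^n_t}-t|<\delta$ with high probability is exactly the paper's mechanism. Two differences are worth noting. First, the paper does not pass through Skorokhod's representation: it bounds $\rho_{d_{J_1[0,\tilde T]}}\bigl((A^n,D^n),\,f^n\bigr)$ directly on the natural coupling and then invokes the assumed weak convergence. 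Second, and more substantively, the paper handles your ``absorption step'' via the $J_1$ modulus of continuity: tightness of $\{(A^n,D^n)\}$ gives, for each $\epsilon$, a $\delta>0$ and a partition $0=t_0<\dots<t_m=\tilde T$ with $t_i-t_{i-1}>\delta$ and oscillation $<\epsilon$ on each piece; one then chooses $\delta'<\frac{\delta}{2\tilde T}\min(\delta,\epsilon)$ so that on $A^n_{\delta',\tilde T}$ renewal points $T^{n,1}_i,T^{n,2}_i$ fall inside each $[t_i,t_{i+1})$, and the homeomorphism $\lambda$ is built to send $T^{n,1}_i\mapsto t_i$.

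Your final paragraph is where the sketch becomes thin. The argument that ``snapping displaces each jump by at most $\delta$, so adjust $\mu^n$ by $O(\delta)$'' does not suffice: several jumps of $(A^n,D^n)$ can fall in a single interval $(T^n_{i-1},T^n_i]$ and all collapse to the same time, and no homeomorphism can separate them again. What you actually need is that the \emph{oscillation} of $(A^n,D^n)$ over each such interval is small, which is precisely the modulus-of-continuity bound the paper invokes; countability of jump times of the limit is not the right handle here. A smaller point: the convergence of $f^n$ must be established on $[0,\tilde T]$ with $\tilde T$ chosen so that $\sup_n\mathbb{P}(E^n_T>\tilde T)$ is small, since the renewal axis must cover the inverse time $E^n_T$; your write-up conflates this with the outer interval $[0,T]$.
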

\begin{proof}
Let $\epsilon>0$. If $E^{n}$ are the generalized inverses of $D^{n}$,
Due to (\ref{eq:general scheme-4}) we see that $E_{T}^{n}\Rightarrow E_{T}$
and therefore, one can find $\tilde{T}>0$ s.t 
\[
\sup_{n}\mathbb{P}\left(E_{T}^{n}>\tilde{T}\right)<\frac{\epsilon}{3}.
\]
 For every $\delta>0$, define the event $A_{\delta,\tilde{T}}^{n}$
as in (\ref{eq:A_delta}). Consider the CTRW $Y_{t}^{n}$ associated
with the time-space jumps $\left(\left(J_{i}^{n},W_{i}^{n}\right),a_{n}U_{i}\right)$
(note that $Y_{t}^{n}\in\mathbb{R}^{d}\times\mathbb{R}_{+})$. We
now claim that for large enough $n$ , we have 
\begin{equation}
\rho_{d_{J_{1}[0,\tilde{T}]}}\left(\left(A_{t}^{n},D_{t}^{n}\right),Y_{t}^{n}\right)<\epsilon.\label{eq:general scheme-1}
\end{equation}
 Recall that if $f\in\mathbb{D}[0,\tilde{T}]$, then the modulus of
continuity of $f$ is given by 
\begin{align*}
\omega_{f}^{\tilde{T}}\left(\delta\right) & =\inf\left\{ \max_{1\leq i\leq m}\theta_{f}[t_{i-1},t_{i}):\exists m\geq1,\right.\\
 & \left.0=t_{0}<t_{1}...<t_{m}=\tilde{T}\text{ s.t. }t_{i}-t_{i-1}>\delta\text{ for all }i\leq m\right\} ,
\end{align*}
where 
\[
\theta_{f}[s,t)=\sup_{s\leq u<w\leq t}\left|f\left(u\right)-f\left(w\right)\right|.
\]
Define 
\[
B_{\delta,\tilde{T}}^{n}=\left\{ \omega_{\left(A_{t}^{n},D_{t}^{n}\right)\left(\omega\right)}^{\tilde{T}}\left(\delta\right)<\epsilon\right\} ,
\]
 assumption (\ref{eq:general scheme-4}), suggests that for every
$\epsilon>0$ there exists $\delta>0$ s.t 
\begin{equation}
\sup_{n}\mathbb{P}\left(B_{\delta,\tilde{T}}^{n}\right)>1-\frac{\epsilon}{3}.\label{eq:general scheme}
\end{equation}
Define the sequence $f^{n}\in\mathbb{D}_{\mathbb{R}^{d}\times\mathbb{R}^{+}}[0,\tilde{T}]$
by $f_{t}^{n}=\left(A_{T_{i}}^{n},D_{T_{i}}^{n}\right)$ on $T_{i}\leq t<T_{i+1}$
and let $\delta'<\frac{\delta}{2\tilde{T}}\min\left(\delta,\epsilon\right)$.
We first condition on $A_{\delta',\tilde{T}}^{n}\cap B_{\delta,\tilde{T}}^{n}$
, $\left(A_{\cdot}^{n},D_{\cdot}^{n}\right)$, and $\left\{ E_{T}^{n}>\tilde{T}\right\} $
i.e. we would like to show that
\begin{equation}
\mathbb{P}\left(d_{J_{1}}\left(\left(A_{\cdot}^{n},D_{\cdot}^{n}\right),f_{\cdot}^{n}\right)>\epsilon|A_{\delta',\tilde{T}}^{n}\cap B_{\delta,\tilde{T}}^{n},\left(A_{\cdot}^{n},D_{\cdot}^{n}\right),\left\{ E_{T}^{n}>\tilde{T}\right\} \right)=0.\label{eq:general scheme-3}
\end{equation}
Indeed, by (\ref{eq:general scheme}), on $B_{\delta,\tilde{T}}^{n}$
one can find $0=t_{0}<t_{1}...<t_{m}=\tilde{T}$ s.t for every $n\geq1$,
$t_{i}-t_{i-1}>\delta$ and $\theta_{\left(A_{\cdot}^{n},D_{\cdot}^{n}\right)}[t_{i-1},t_{i})<\epsilon$
for $1\leq i\leq m$. Let $T^{n}=\left\{ T_{i}^{n}:T_{i}^{n}\leq\tilde{T}\right\} $.
On $A_{\delta',\tilde{T}}^{n}$ one can find the two points 
\begin{align*}
T_{i}^{n,1} & =\inf\left\{ T^{n}\cap[t_{i},t_{i+1})\right\} \\
T_{i}^{n,2} & =\sup\left\{ T^{n}\cap[t_{i},t_{i+1})\right\} ,
\end{align*}
 s.t $t_{i}\leq T_{i}^{n,1}<T_{i}^{n,2}<t_{i+1}$ for $0\leq i\leq m-1$.
The distance between $T_{i}^{n,2}$ and $T_{i+1}^{n,1}$ is at most
$\delta'$ and so one can find a homeomorphism $\lambda:[0,\tilde{T}]\rightarrow[0,\tilde{T}]$
s.t $\lambda\left(T_{i}^{n,1}\right)=t_{i}$ and s.t $\sup\left|\lambda\left(s\right)-s\right|\leq m\delta'\leq\frac{\tilde{T}}{\delta}\delta'<\epsilon$
(one simply maps the interval $[T_{i}^{n,2},T_{i+1}^{n,1})$ to $[T_{i}^{n,2},t_{i+1})$
which costs no more then $\text{\ensuremath{\delta}}'$ as $\left|T_{i}^{n,2}-T_{i+1}^{n,1}\right|<\delta'$)
. Next note that by the definition of $\omega_{f}^{\tilde{T}}\left(\delta\right)$
and (\ref{eq:general scheme}) we see that on $A_{\delta',\tilde{T}}^{n}\cap B_{\delta,\tilde{T}}^{n}$
\begin{align*}
\sup_{0\leq s\leq\tilde{T}}\left|f_{\lambda\left(s\right)}^{n}-\left(A_{s}^{n},D_{s}^{n}\right)\right| & <\epsilon,\\
\sup_{0\leq s\leq\tilde{T}}\left|\lambda\left(s\right)-s\right| & <\epsilon.
\end{align*}
Hence (\ref{eq:general scheme-3}) holds. By \lemref{A_delta}, for
large enough $n$ 
\[
\mathbb{P}\left(A_{\delta}^{n}\right)>1-\frac{\epsilon}{3}.
\]
Taking expectation in (\ref{eq:general scheme-3}) while using independence
we conclude that for large enough $n$
\[
\mathbb{P}\left(d_{J_{1}[0,\tilde{T}]}\left(\left(A_{\cdot}^{n},D_{\cdot}^{n}\right),f_{\cdot}^{n}\right)>\epsilon\right)<\epsilon,
\]
or (\ref{eq:general scheme-1}) which implies that 
\[
f_{t}^{n}\overset{J_{1}[0,\tilde{T}]}{\Rightarrow}\left(A_{t},D_{t}\right).
\]
From here we use \lemref{Straka_and_Henry} to obtain $X_{t}^{n}\overset{J_{1}[0,E_{T}]}{\Rightarrow}\left(A_{E_{t}-}\right)^{+}$
.
\end{proof}
\begin{rem}
If $U_{1}$ in \propref{general_scheme} has finite mean and $\left(A_{t}^{n},D_{t}^{n}\right)\overset{J_{1}}{\rightarrow}\left(A_{t},D_{t},\right)$
a.s, then using the SLLN of Renewal Theory and same arguments as in
\propref{general_scheme} we see that conditioned on $\left\{ A_{t}^{n},D_{t}^{n}\right\} _{n=1}^{\infty}$
we have $X_{t}^{n}\overset{J_{1}}{\rightarrow}\left(A_{E_{t}-}\right)^{+}$
with probability 1. 
\end{rem}
As we have shown that CTRW with heavy tailed waiting times can be
represented as CTRW with finite mean waiting times subordinated to
a time-change, we see that CTRWs � la Montroll and Weiss are essentially
CTRWs in random environment. Among the well known Random Walks in
Random Environment(RWRE) are the so-called trap models. The most basic
of which is arguably the Bouchaud model. The most basic setup consists
of a simple graph $G=\left(V,E\right)$ where $V$ is the set of vertices
and $E$ is the set of edges. We are also given the trapping environment
\[
\boldsymbol{\tau}=\left\{ \tau_{x}>0:x\in V\right\} .
\]
On the graph $G$ we preform a CTRW with exponential waiting times
whose jump rate is given by 
\[
w_{xy}=\left\{ \begin{array}{cc}
\tau_{x}^{-1}, & \left(x,y\right)\in E,\\
0, & \text{otherwise}.
\end{array}\right.,
\]
and the generator is given by 
\begin{equation}
Lf\left(x\right)=\sum_{y\sim x}w_{x,y}\left(f\left(y\right)-f\left(x\right)\right).\label{eq:generator}
\end{equation}
In words, the larger $\tau_{x}$ is, the deeper the trap at site $x$
and the longer the CTRW stays at the site $x$. In order to obtain
a non-trivial (simple random walk on $G$) limit we assume that $\left\{ \tau_{x}\right\} $
are i.i.d and that $\tau_{x}\in\mathfrak{L}_{s^{\alpha}}$. In \cite{fontes2002random}
Fontes et al studied the Bouchaud model where $G$ is $\mathbb{Z}$
with nearest neighbor edges. The Markov process $X_{t}$ associated
with the generator (\ref{eq:generator}) (conditioned on the environment
$\boldsymbol{\tau}$) is called the \emph{quenched} process. Taking
expectation w.r.t the law of $\boldsymbol{\tau}$ we obtain the \emph{annealed}
process. Let $a_{n}$ be the sequence defined in (\ref{eq:the sequence a_n}).
One is interested in the limit (in distribution) of the Bouchaud model
\begin{equation}
n^{-1}X_{tna_{n}^{-1}}\rightarrow X_{t}.\label{eq:Bouchaud limit}
\end{equation}
It was proven in \cite{fontes2002random} that $X_{t}$ is a Brownian
motion time-changed by the generalized inverse of the local time of
a standard Brownian motion integrated against a Poisson measure on
$\mathbb{R}\times\mathbb{R}_{+}$ with intensity $\alpha t^{-\alpha-1}1\left(t\right)_{(0,\infty)}dtdx$.
This was referred to as \emph{Singular Diffusion}. It turns out that
the dimension of the lattice affects the limit in (\ref{eq:Bouchaud limit})(although
the scaling is different). Indeed, it was proven in \cite{Arous2007}
that under proper scaling of the Bouchaud model on $\mathbb{Z}^{d}$
for $d>1$ the limit is $B_{E_{t}}$ , i.e. a Brownian motion time-changed
by the inverse of a standard stable subordinator independent of $B_{t}$
(this is referred to as \emph{Fractional Kinetics}) . It is worth
mentioning here that the scaling in dimension $d>2$  is the same
as that of the CTRW in the sense of Montroll and Wiess. The limit
of the Bouchaud model for dimension larger than one is the same as
that in the uncoupled Montroll and Wiess CTRW model. \propref{almost_every_CTRW_is_time_changed_CTRW}
suggests that the CTRW in the sense of Montroll and Wiess with waiting
times in $\mathfrak{L}_{s^{\alpha}}$ has a representations as annealed
process of possibly two different RWRE. Consider a probability space
$\left(\Omega,\mathcal{F},\mathbb{P}\right)$ on which there exists
a random continuous time-change (continuous increasing processes)
$E_{t}$. We also have a CTRW $\mathring{X}_{t}\in\mathbb{Z}^{d}$
associated with the i.i.d space-time jumps $\left(J_{i},U_{i}\right)$
where $U_{1}$ has finite mean, $\left\{ U_{i}\right\} $ is independent
of $E_{t}$, and where the probability transition function $p_{t}\left(\left(J_{i},U_{i}\right)\in\left(dx,du\right)\right)$
may depend on time and the random enviornment. Unless $J_{i}$ and
$U_{i}$ are independent $\mathring{X}_{t}$ need not be Markovian
even if $U_{1}\sim Exp\left(\lambda\right).$ Given a realization
of the time change $E_{t}\left(\omega\right)$ (our random environment)
we consider the process (the quenched process) 
\begin{equation}
X_{t}^{\lyxmathsym{\mbox{I}}}=\mathring{X}_{E_{t}\left(\omega\right)}.\label{eq:RWRE_Type_1}
\end{equation}
 We refer to $X_{t}^{\text{\mbox{I}}}$ in (\ref{eq:RWRE_Type_1})
as the quenched process of RWRE of type \mbox{I}. We will say that
$\tilde{X}_{t}^{\text{\mbox{I}}}$ is an annealed process of RWRE
of type \mbox{I} if there exists a CTRW $\mathring{X}_{t}$ s.t 
\[
\mathbb{P}\left(\tilde{X}_{\cdot}^{\lyxmathsym{\mbox{I}}}\in dx_{\cdot}\right)=\int\mathbb{P}\left(\mathring{X}_{\xi_{\cdot}}\in dx_{\cdot}\right)P_{E}\left(d\xi_{\cdot}\right),
\]
where $P_{E}\left(d\xi_{\cdot}\right)$ is the law of our random environment
$E_{t}$, that is 
\[
\int_{A}P_{E}\left(d\xi_{\cdot}\right)=\mathbb{P}\left(E\in A\right),
\]
with $A$ a Borel set in the Borel sigma-algebra of $\mathbb{D}$.
We are interested in the limit
\begin{equation}
n^{-1}\tilde{X}_{tn^{\frac{2}{\alpha}}}^{\text{\mbox{I}}}\Rightarrow\bar{X}_{t}^{\lyxmathsym{\mbox{I}}}.\label{eq:scaling_CTRW-1}
\end{equation}
Next we introduce another RWRE model which is somewhat of a temporal
trap model. Let 
\begin{equation}
\boldsymbol{\tau}=\left\{ \tau_{n}>0:n\in\mathbb{Z}_{+}\right\} ,\label{eq:random_enviornment}
\end{equation}
be our random temporal landscape. We also assume the existence of
a family of probability transition functions 
\begin{equation}
p_{t}\left(s,x;y\right)\qquad t>0.\label{eq:transition_probability}
\end{equation}
Let $X_{t}$ be the CTRW who after the n'th jump ends up at site $x$
and waits an exponential time $s$ of mean $\tau_{n}$, and then makes
a spatial jump to one of its neighbors according to a distribution
$p_{\tau_{n}}\left(s;y\right)$. In other words, the temporal landscape
$\boldsymbol{\tau}$ affects both the temporal dynamics as well as
the spatial. More precisely, assume we have a sequence of positive
r.v $\left\{ \tau_{i}\right\} $ and let $\left\{ U_{i}\right\} $
be a sequence of i.i.d waiting times s.t $\mathbb{E}\left(U_{1}\right)=1$
independent of $\left\{ \tau_{i}\right\} $. We define $T_{n}=\sum_{i=1}^{n}\tau_{i}U_{i}$
to be the epochs of our random walk. Let $\left\{ J_{i}\right\} $
be i.i.d r.vs in $\mathbb{Z}^{d}$ and $S_{n}=\sum_{i=1}^{n}J_{i}$
be a discrete random walk on $\mathbb{Z}^{d}$ s.t 
\[
\mathbb{P}\left(J_{n+1}=y|\tau_{n}=t,U_{i+1}=s\right)=p_{t}\left(s,x;y\right).
\]
 Then, conditioning on $\left\{ \tau_{1}=t_{1},\tau_{2}=t_{2},...\right\} $
we define 
\[
X_{t}^{\lyxmathsym{\mbox{II}}}=S_{n}\qquad T_{n}\leq t<T_{n+1}.
\]
 We note that in general $X_{t}^{\lyxmathsym{\mbox{II}}}$ is not
a Markov process, however, if $U_{1}$ is exponentially distributed,
(\ref{eq:transition_probability}) is independent of $s$ and $N_{t}$
counts the number of jumps of $X_{t}^{\lyxmathsym{\mbox{II}}}$ until
time $t$, then $\left(X_{t},N_{t}\right)$ is a Markov process with
the generator 
\begin{equation}
Lf\left(x,z\right)=\sum_{y\sim x}\tau_{z}^{-1}p_{\tau_{z}}\left(y\right)\left(f\left(y,z+1\right)-f\left(x,z\right)\right).\label{eq:quenched_generator}
\end{equation}
We shall refer to $X_{t}^{\lyxmathsym{\mbox{II}}}$ as the quenched
process of a RWRE of Type $\text{\mbox{II}}$. We define the annealed
process of a RWRE of Type $\text{\mbox{II}}$ similarly to that of
type \mbox{I}. That is 
\[
\mathbb{P}\left(\tilde{X}_{\cdot}^{\lyxmathsym{\mbox{II}}}\in dx_{\cdot}\right)=\int\mathbb{P}\left(X_{\cdot}^{\lyxmathsym{\mbox{II}}}\in dx_{\cdot}\right)P_{\boldsymbol{\tau}}\left(d\boldsymbol{\tau}\right),
\]
where $P_{\boldsymbol{\tau}}\left(d\boldsymbol{\tau}\right)$ is a
probability distribution on the Borel sigma-algebra with respect to
the product topology on $\mathbb{R}_{+}^{\mathbb{N}}$ s.t for every
cylinder set of the form $A=\mathbb{R}_{+}\cdots\times\mathbb{R}_{+}\times A_{n_{1}}\times A_{n_{2}}\cdots\times A_{n_{m}}\times\mathbb{R}_{+}\cdots$
with $A_{n_{i}}\subset\mathbb{R}_{+}$, 
\[
\int_{A}P_{\boldsymbol{\tau}}\left(d\boldsymbol{\tau}\right)=\mathbb{P}\left(\tau_{n_{1}}\in A_{n_{1}},\tau_{n_{1}}\in A_{n_{2}},...,\tau_{n_{m}}\in A_{n_{m}}\right).
\]
Here we shall be interested in the limit 
\begin{equation}
n^{-1}\tilde{X}_{tn^{\frac{2}{\alpha}}}^{\lyxmathsym{\mbox{II}}}\Rightarrow\bar{X}_{t}^{\text{\mbox{II}}}\label{eq:scaling_CTRW-2}
\end{equation}
 \\
Let $\mathfrak{M}$ be the set of probability measures whose all moments
are finite. Consider the sets
\begin{align*}
\mathcal{A} & =\cup_{\psi\in\mathfrak{B}_{s^{\alpha}}}\Phi_{\psi}\left(\mathfrak{M}\right),\\
\mathcal{B} & =\Phi_{s^{\alpha}}\left(\mathfrak{M}\right).
\end{align*}
 We have seen already in \lemref{Pseudo_smooth_dense} that $\mathcal{A}$
is weakly dense in $DOA\left(\alpha\right)$. Since $\mathfrak{M}$
is dense in $\mathfrak{L}$ and $\Phi_{\psi}$ is weakly continuous
for every $\psi\in\mathfrak{B}$, we conclude that $\mathcal{B}$
is weakly dense in $\mathfrak{\mathfrak{L}}_{s}^{s^{\alpha}}$. \\
 In order to facilitate the exposition of our results me make the
following technical assumption.
\begin{assumption}
\label{assu:transition probability}Assume $\left\{ J_{i},W_{i}\right\} \in\mathbb{R}^{d}\times\mathbb{R}_{+}$
are i.i.d space-time jumps. We assume that the conditional distribution
$p\left(dx;w\right)=\mathbb{P}\left(J_{1}\in d\boldsymbol{x}|W_{1}=w\right)$
is weakly continuous in $w$, $\mathbb{E}\left(J_{1}|W_{1}=w\right)=\boldsymbol{0}$
, $\boldsymbol{\sigma}^{2}\left(w\right)=\mathbb{E}\left(J_{i}^{T}J_{i}|W_{1}=w\right)$
is a full rank $d\times d$ matrix for $\mathbb{P}\left(W_{1}\in dw\right)$
almost every $w$ and 
\begin{equation}
\sup_{w}\left\Vert \boldsymbol{\sigma}^{2}\left(w\right)\right\Vert <\infty,\label{eq:condition on sigma}
\end{equation}
where $\left\Vert \cdot\right\Vert $ is any norm on the space of
$d\times d$ matrices. 
\end{assumption}
Define 
\[
\boldsymbol{\sigma}_{\mu}^{2}=\int_{\mathbb{R}_{+}}\boldsymbol{\sigma}^{2}\left(t\right)\mu\left(dt\right)\qquad\mu\in\mathfrak{L}.
\]
Suppose $f_{t}\in\mathbb{D}$, we denote by $f_{t}^{s}$ the time-shift
of $f$, i.e.
\[
f_{t}^{s}:=f_{t+s}\qquad t>0.
\]
Recall the definition of $\mu_{f}\left(dx\right)$ in (\ref{eq:mu_f}). 
\begin{thm}
\label{thm:CTRW_as_RWRE}Let $X_{t}$ be a CTRW associated with the
i.i.d space-time jumps $\left(J_{i},W_{i}\right)$ satisfying Assumption
\ref{assu:transition probability} where $W_{1}\in\mathcal{A}$ and
$J_{i}\in\mathbb{Z}^{d}$ . Then $X_{t}$ is an annealed process of
RWRE of type \mbox{I}. Moreover, if $W_{1}$ is also in $\mathcal{B}$
then $X_{t}$ is also an annealed process of RWRE of type \mbox{II}.
In both cases, the limits \ref{eq:scaling_CTRW-1} and \ref{eq:scaling_CTRW-2}
exist and equal 
\begin{equation}
\bar{X}_{t}=B_{E_{t}},\label{eq:CTRW_as_RWRE-3}
\end{equation}
where $E_{t}$ is the inverse of a stable subordinator, and conditioned
on $E_{\cdot}=\xi$, $B_{t}$ is a time-inhomogeneous diffusion whose
generator is 
\begin{equation}
L_{t}\left(f\right)\left(x\right)=\frac{1}{2}\nabla_{x}f^{T}\boldsymbol{\sigma}_{\mu_{\left(\xi^{-1}\right)^{t}}}^{2}\nabla_{x}f,\label{eq:generator of B_t conditioned on E}
\end{equation}
where $\mu\in\mathfrak{L}_{s}$. 
\end{thm}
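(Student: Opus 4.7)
\emph{Representations.} Since $W_1 \in \mathcal{A} \subset \mathfrak{L}_s^\psi$ for some $\psi \in \mathfrak{B}_{s^\alpha}$, the plan for the Type \mbox{I} representation is to invoke \propref{time_changed_CTRW}, which produces a CTRW $Y$ with iid space-time jumps $(J'_i, U_i)$ ($U_1 \in \mathfrak{M} \subset \mathfrak{L}_s$) and an inverse subordinator $E_t$ of symbol $\psi$ independent of $\{U_i\}$, satisfying $X \overset{J_1}{\sim} (Y_{E_\cdot -})^+ \overset{J_1}{\sim} Y_{E_\cdot}$ (the last by independence and Remark \ref{rem:independent outer and inner processes}); setting $\mathring{X} = Y$ displays $X$ as the Type \mbox{I} annealed process. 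If additionally $W_1 \in \mathcal{B} = \Phi_{s^\alpha}(\mathfrak{M})$, then $W_1 \overset{d}{=} D(\tilde U)$ with $D$ a standard $\alpha$-stable subordinator and $\tilde U \in \mathfrak{M}$; by self-similarity $D(\tilde U) \overset{d}{=} \tilde U^{1/\alpha} D(1)$, so I would take mutually independent iid sequences $\{\tau_n\} \sim D(1)$ and $\{U'_n\} \sim \tilde U^{1/\alpha}$ (rescaled to unit mean) and set $W_n = \tau_n U'_n$ with $J_n \mid (\tau_n, U'_n) \sim p(\cdot\,; \tau_n U'_n)$; this reproduces the original joint law and realizes $X$ as the Type \mbox{II} annealed process with landscape $\{\tau_n\}$.

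\emph{Scaling limit.} Both (\ref{eq:scaling_CTRW-1}) and (\ref{eq:scaling_CTRW-2}) reduce to identifying the limit of $n^{-1} X_{t n^{2/\alpha}}$. The plan is to apply \propref{almost_every_CTRW_is_time_changed_CTRW} (with the scaling tuned to the theorem's normalization) to obtain a finite-mean CTRW $X^n$ with iid space-time jumps $(J'^n_i, n^{-2} U_i)$---the $J'^n_i$ carrying the $(J_i, W_i)$-coupling through joint sampling against a $\psi_n$-subordinator---and an inverse subordinator $E^n$ with $n^{-1} X_{t n^{2/\alpha}} \overset{J_1}{\sim} (X^n_{E^n_t -})^+$ and $E^n \overset{J_1}{\Rightarrow} E^*$, the inverse of a standard $\alpha$-stable subordinator. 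Donsker's theorem applied to the $O(n^2)$ jumps of $X^n$, each of variance $O(n^{-2})$, would then give $X^n \overset{J_1}{\Rightarrow} B$, and the coupling of the space-time jumps forces the joint convergence $(X^n, E^n) \overset{J_1}{\Rightarrow} (B, E^*)$. Continuity of the time-change map $\mathcal{H}$ (as in \lemref{Straka_and_Henry}) then yields $n^{-1} X_{t n^{2/\alpha}} \overset{J_1}{\Rightarrow} (B_{E^*_\cdot -})^+$, which I denote $B_{E^*_t}$.

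\emph{Conditional generator (the main obstacle).} Conditionally on $E^* = \xi$, i.e.\ on $D^* = \xi^{-1}$, the limit $B$ should be a continuous square-integrable $d$-dimensional martingale---Assumption \aussuref{transition probability} gives $\mathbb{E}(J_1 \mid W_1) = 0$, which passes to the limit---so by L\'evy's characterization identifying $B$ as the claimed time-inhomogeneous diffusion reduces to computing its quenched quadratic variation as $\int_0^t \sigma^2_{\mu_{(\xi^{-1})^r}}\, dr$. Writing $m = \mathbb{E}(U_1)$ and $W'^\psi_i = D^\psi(T^s_i) - D^\psi(T^s_{i-1})$, the pre-limit quadratic variation is $n^{-2} \sum_{i \leq n^2 t / m} \sigma^2(W'^\psi_i)$; the relative stability of $\{U_i\}$ (\lemref{relative_stability}) together with a quenched renewal LLN lets me replace $T^s_i$ by $im$, so that the $i$-th summand becomes $\int \sigma^2\!\left(\xi^{-1}((i-1)m + u) - \xi^{-1}((i-1)m)\right) \mu(du) = \sigma^2_{\mu_{(\xi^{-1})^{(i-1)m}}}$ by the definition of $\mu_f$, and the Riemann-sum limit would produce the stated integral. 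The hard part will be making this conditional LLN rigorous: one must control uniform-in-$t$ convergence of the quenched quadratic variation in the presence of the heavy-tailed environment, invoke (\ref{eq:condition on sigma}) to rule out residual drift or off-diagonal contributions from the $(J'_i, W'^\psi_i)$-coupling under the scaling, and reconcile the $Y$-time parameterization of the pre-limit with the precise definition of $\mu_{(\xi^{-1})^t}$.
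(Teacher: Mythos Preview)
Your Type \mbox{I} and Type \mbox{II} representations, and the scaling-limit setup via \propref{almost_every_CTRW_is_time_changed_CTRW} together with the continuity of $\mathcal{H}$ from \lemref{Straka_and_Henry}, match the paper's argument essentially verbatim.

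The divergence is in how the quenched limit $B$ is identified. You propose annealed-first: establish the limit via an invariance principle plus continuous mapping, then pin down the conditional law of $B$ given $E^{*}=\xi$ by computing its predictable quadratic variation through a quenched renewal LLN and invoking a L\'evy-type martingale characterization. The paper instead works quenched from the outset: it conditions on the environment $D^{n}_{\cdot}=(\xi^{n}_{\cdot})^{-1}$, embeds the conditional CTRW into an auxiliary \emph{space-time} Markov process $\bigl(n^{-1}S_{N^{n}_{t}},\,n^{-2}T_{N^{n}_{t}}\bigr)$ driven by an independent Poisson clock of intensity $n^{2}$, writes down its generator $L^{n}$ explicitly, and shows $L^{n}\to L$ pointwise on $C^{2,1}$ using Assumption~\ref{assu:transition probability}. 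This Poissonization-plus-generator route sidesteps exactly what you flag as ``the hard part'': there is no quenched renewal LLN to justify, no uniform-in-$t$ control of a random quadratic variation, and no separate tightness or martingale-problem argument. Your route is viable in principle but considerably heavier.

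Two smaller points. Your appeal to ``Donsker's theorem'' for $X^{n}\Rightarrow B$ is loose: the $J_{i}$ are not i.i.d.\ once coupled to the $W_{i}$, so what is actually required is a martingale functional CLT---which is precisely what your quadratic-variation step is setting up anyway, so the Donsker sentence is redundant rather than wrong. And the identity you assert, equating $\int\sigma^{2}\bigl(\xi^{-1}((i{-}1)m+u)-\xi^{-1}((i{-}1)m)\bigr)\,\mu(du)$ with $\sigma^{2}_{\mu_{(\xi^{-1})^{(i-1)m}}}$, needs to be checked against the paper's definition of $\mu_{f}$ in (\ref{eq:mu_f}): there $\mu_{f}$ is the pushforward of $\mu$ under $f^{-1}$, not under the increment map $u\mapsto f(u)-f(0)$, and the paper's own exposition is not fully explicit about how the shift $(\xi^{-1})^{t}$ interacts with that convention. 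This bookkeeping would need care in either approach.
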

\begin{proof}
If $W_{1}\in\mathcal{A}$, by \propref{almost_every_CTRW_is_time_changed_CTRW}
and the definition of $\mathcal{A}$, one can find $U_{1}\in\mathfrak{M}$
and an inverse-subordinator $E_{t}^{1}$ s.t 
\begin{equation}
X_{t}\overset{J_{1}}{\sim}\left(\mathring{X}_{E_{t}^{1}-}\right)^{+},\label{eq:CTRW_as_RWRE-1}
\end{equation}
where $\mathring{X}_{t}$ is a CTRW with space-time jumps $\left(J_{i},U_{i}\right)$
with $\mathbb{E}\left(U_{1}\right)<\infty$. Considering (\ref{eq:pseudo_self_similarity})
we see that we may assume that $\mathbb{E}\left(U_{1}\right)=1$ as
this would only change the convergence to a standard stable subordinator
by a constant time change. This proves that $X_{t}$ is an annealed
process of RWRE of type \mbox{I}. Let $X_{t}^{n}$ be the CTRW associated
with the space-time jumps $\left(n^{-1}J_{i},n^{-\frac{2}{\alpha}}W_{i}\right)$,
then 
\[
X_{t}^{n}\overset{J_{1}}{\sim}n^{-1}X_{tn^{\frac{2}{\alpha}}},
\]
 and by \propref{almost_every_CTRW_is_time_changed_CTRW} we may assume
w.l.o.g that there exists a sequence of inverses of subordinators
$E_{t}^{n}$ s.t 
\begin{equation}
E_{t}^{n}\overset{J_{1}}{\rightarrow}E_{t}\label{eq:CTRW_as_RWRE-2}
\end{equation}
 a.s. where $E_{t}$ is the inverse of a stable subrodinator of index
$\alpha$. By \propref{almost_every_CTRW_is_time_changed_CTRW} we
have
\[
X_{t}^{n}\overset{J_{1}}{\sim}\left(\mathring{X}_{E_{t}^{n}-}^{n}\right)^{+},
\]
where $\mathring{X}_{t}^{n}$ is the CTRW associated with $\left\{ n^{-1}J_{i},n^{-2}U_{i}\right\} $.
We now wish to find $\mathbb{P}\left(\left(n^{-1}J_{i+1},n^{-2}U_{i+1}\right)\in\left(dx,du\right)|E_{\cdot}^{n}=\xi_{\cdot}^{n}\right)$.
Let $T_{n}=\sum_{i=1}^{n}U_{i}$ , we have 
\begin{align*}
\text{\ensuremath{\mathbb{P}}}\left(\left(n^{-1}J_{i+1},n^{-\frac{2}{\alpha}}W_{i+1}\right)\in\left(dx,dw\right)\right) & =\\
\text{\ensuremath{\mathbb{P}}}\left(\left(n^{-1}J_{i+1},D_{\left(n^{-2}U_{i+1}+n^{-2}T_{i}\right)}^{n}-D_{n^{-2}T_{i}}^{n}\right)\in\left(dx,dw\right)\right) & =\\
\int\text{\ensuremath{\mathbb{P}}}\left(n^{-1}J_{i+1}\in dx|n^{-2}T_{i}=t,n^{-2}U_{i+1}=u,D_{\cdot}^{n}=\left(\xi_{\cdot}^{n}\right)^{-1}\right)\\
\times\mathbb{P}\left(n^{-2}U_{i+1}\in du\right)\mathbb{P}\left(n^{-2}T_{i}\in dt\right)\mathbb{P}\left(D_{\cdot}^{n}\in d\left(\xi_{\cdot}^{n}\right)^{-1}\right).
\end{align*}
We conclude that
\begin{align*}
\text{\ensuremath{\mathbb{P}}}\left(n^{-1}J_{i+1}\in dx|n^{-2}T_{i}=t,n^{-2}U_{i+1}=u,D_{\cdot}^{n}=\left(\xi_{\cdot}^{n}\right)^{-1}\right) & =p\left(ndx;\left(\xi^{n}\right)_{n^{2}\left(t+u\right)}^{-1}\right).
\end{align*}
 Let $Y_{t}^{n}$ be the Markov process $\left(\left(n^{-1}S_{N_{t}^{n}},n^{-2}T_{N_{t}^{n}}\right)\right)$
conditioned on $\left\{ D_{\cdot}=\left(\xi_{\cdot}^{n}\right)^{-1}\right\} $,
where $S_{n}=\sum_{i=1}^{n}J_{i}$ and $N_{t}^{n}$ is a homogeneous
Poisson process with intensity $n^{2}$. $Y_{t}^{n}$ is a Markov
process with generator 
\[
L^{n}\left(f\right)\left(x,t\right)=n^{2}\int p\left(dy;\left(\xi^{n}\right)_{\left(t+u\right)}^{-1}\right)\mathbb{P}\left(U_{1}\in du\right)\left(f\left(x+yn^{-1},t+un^{-2}\right)-f\left(x,t\right)\right),
\]
 for every $f\in C^{2,1}\left(\mathbb{R}^{d}\times\mathbb{R}_{+}\right)$.
Let $\xi^{-1}=\lim_{n\rightarrow\infty}\left(\xi^{n}\right)^{-1}$
where the limit is in $J_{1}-$topology. If we denote $\mu\left(du\right)=\mathbb{P}\left(U\in du\right)$,
it is not hard to see that $\mu_{\left(\left(\xi^{n}\right)^{-1}\right)^{t}}\rightarrow\text{\ensuremath{\mu_{\left(\xi^{-1}\right)^{t}}} }$
for every $\ensuremath{t\geq0}$ where convergence is in the weak
topology of measures in $\mathfrak{L}$ and where $\mu_{\left(\xi^{-1}\right)^{t}}$
is as in (\ref{eq:mu_f}). By Assumption \ref{assu:transition probability}
it is also not hard to verify that 
\[
L^{n}\left(f\right)\left(x,t\right)\rightarrow L\left(f\right)\left(x,t\right),
\]
where
\begin{equation}
L\left(f\right)\left(x,t\right)=\frac{1}{2}\nabla_{x}f^{T}\boldsymbol{\sigma}_{\mu_{\left(\xi^{-1}\right)^{t}}}^{2}\nabla_{x}f+\frac{\partial}{\partial t}f\label{eq:CTRW_as_RWRE}
\end{equation}
with $\nabla_{x}f^{T}=\left(\frac{\partial}{\partial x_{1}}f,...,\frac{\partial}{\partial x_{d}}f\right)$.
(\ref{eq:condition on sigma}) ensures that (\ref{eq:CTRW_as_RWRE})
is indeed the generator of a Markov process on $\mathbb{D}$ (see
\cite[Theorem 5.4.2]{Kolokoltsov2011}). It follows that $Y_{t}^{n}\overset{J_{1}}{\Rightarrow}Y_{t}$
where $Y_{t}$ is a Markov process whose generator is given by (\ref{eq:CTRW_as_RWRE}).
By Lemma \ref{lem:Straka_and_Henry} we see that 
\[
\mathring{X}_{t}^{n}\overset{J_{1}}{\Rightarrow}B_{t},
\]
where $B_{t}$ is a diffusion with the generator in (\ref{eq:generator of B_t conditioned on E}).
Finally we conclude that 
\[
\mathring{X}_{E_{t}^{n}}^{n}\overset{J_{1}}{\Rightarrow}\left(B_{\xi-}\right)^{+}.
\]
Since the generator in (\ref{eq:generator of B_t conditioned on E})
is a local operator we conclude that $t\mapsto B_{t}$ is continuous
a.s, and that (\ref{eq:CTRW_as_RWRE-3}) holds. Next we assume that
$W_{1}\in\mathfrak{\mathfrak{L}}_{s}^{s^{\alpha}}$. Note that this
suggests that $E_{t}^{n}=E_{t}$ for every $n\geq1$ and that 
\begin{align}
W_{i} & \sim D_{\left(U_{i}+T_{i-1}\right)}-D_{T_{i-1}}\nonumber \\
 & \sim U_{i}^{\frac{1}{\alpha}}D_{1}.\label{eq:Type_2}
\end{align}
The mapping $U\mapsto U^{\frac{1}{\alpha}}$ maps the set $\mathfrak{M}$
onto $\mathfrak{M}$. It follows that $X_{t}$ is the CTRW associated
with the space-time jumps $\left\{ J_{i},U_{i}^{\frac{1}{\alpha}}\tau_{i}\right\} $
with $\boldsymbol{\tau}=\left\{ \tau_{i}\right\} $ where $\tau_{1}\sim D_{1}$.
This shows that $X_{t}$ is an annealed process of RWRE of type \mbox{II}
with waiting times $U_{i}^{'}=\left(\mathbb{E}\left(U_{i}^{\frac{1}{\alpha}}\right)\right)^{-1}U_{i}^{\frac{1}{\alpha}}$
and random environment $\boldsymbol{\tau}'=\left\{ \mathbb{E}\left(U_{i}^{\frac{1}{\alpha}}\right)\tau_{i}\right\} $.
Assume for simplicity that $\mathbb{E}\left(U_{i}^{\frac{1}{\alpha}}\right)=1$.
Using (\ref{eq:Type_2}) and the calculations for the RWRE of type
\mbox{I} we conclude that the quenched limit of the RWRE of type \mbox{II}
is $B_{\xi}$. 
\end{proof}

\section{bound on the error}

In this section we give a polynomial bound on the distance between
the law of a given uncoupled CTRW $Y_{t}$ and the law of a time changed
CTRW $\mathring{X_{E_{t}^{n}}^{n}}$ on the space $\mathbb{D}[0,T]$.
The proof is constructive and therefore provides us with the space-time
jumps of $\mathring{X}_{t}$ as well as with the inverse-subordinators
$E_{t}^{n}$. The bound relies on the following lemma. 
\begin{lem}
 \label{lem:annoying_lemma}Let $X\in\mathfrak{L}_{s^{\alpha}}$
be a r.v. with tail $\bar{f}\left(t\right)=\mathbb{P}\left(X>t\right)$.
There exists a r.v $Y\in\mathfrak{L}_{s}^{\overline{s^{\alpha}}}$
and a coupling $\mathbb{P}_{couple}$ of $X$ and $Y$ s.t
\[
\mathbb{P}_{couple}\left(\left|X-Y\right|>t\right)=o\left(\bar{f}\left(t\right)\right)
\]
 
\end{lem}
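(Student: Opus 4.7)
The plan is to construct $Y$ via the explicit approximation from Lemma~\ref{lem:Pseudo_smooth_dense}, and to realize the coupling by the standard monotone (quantile) coupling. Specifically, fix any $n$ (for instance $n=1$), let $X_n = X \wedge n$ with $\mu_n = \mathbb{E}(X_n) < \infty$, and set
\[
\psi_n(s) = s + \mu_n^{-1}\int_0^\infty\left(1-e^{-sy}\right)f(dy),
\]
where $f$ denotes the distribution of $X$. The computation in the proof of Lemma~\ref{lem:Pseudo_smooth_dense} shows $\psi_n\in\mathfrak{B}_{s^{\alpha}}$ and $X_n\in\mathfrak{L}_s$, so $Y:=\Phi_{\psi_n}(X_n)\in\mathfrak{L}_s^{\overline{s^{\alpha}}}$. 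The tail-matching property \eqref{eq:same tail} then gives
\[
\frac{\mathbb{P}(Y>t)}{\bar{f}(t)} \longrightarrow 1 \qquad (t\to\infty).
\]

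With the law of $Y$ thus fixed, I would couple $X$ and $Y$ through a single uniform random variable $U$ on $[0,1]$ by setting $X = F^{-1}(U)$ and $Y = F_Y^{-1}(U)$, where $F, F_Y$ are the CDFs of $X$ and $Y$. Since $\bar{f}$ is regularly varying of index $-\alpha$ and $\bar{f}_Y\sim\bar{f}$, standard regular variation theory (e.g.\ \cite[Theorem~1.5.12]{bingham1989regular}) implies that the generalized inverses are also asymptotically equivalent:
\[
\frac{F_Y^{-1}(u)}{F^{-1}(u)} \longrightarrow 1 \qquad (u\to 1^-).
\]

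Given $\delta>0$, I would choose $u_\delta\in(0,1)$ so that $|F^{-1}(u)-F_Y^{-1}(u)|\le \delta F^{-1}(u)$ for $u>u_\delta$, and so that $M_\delta:=F^{-1}(u_\delta)\vee F_Y^{-1}(u_\delta)<\infty$. Splitting on $\{U\le u_\delta\}$ versus $\{U>u_\delta\}$, the first event contributes nothing once $t>2M_\delta$, while on the second $|X-Y|>t$ forces $X>t/\delta$, whence
\[
\mathbb{P}_{couple}(|X-Y|>t) \le \bar{f}(t/\delta).
\]
Regular variation gives $\bar{f}(t/\delta)/\bar{f}(t)\to\delta^{\alpha}$, so $\limsup_{t\to\infty}\mathbb{P}_{couple}(|X-Y|>t)/\bar{f}(t)\le\delta^{\alpha}$, and letting $\delta\downarrow 0$ yields the claim.

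The main technical point is the passage from the tail equivalence $\bar{f}_Y\sim\bar{f}$ to the quantile equivalence $F_Y^{-1}\sim F^{-1}$; this rests on the Uniform Convergence Theorem for slowly varying functions together with the fact that the generalized inverse of a regularly varying function is again regularly varying. Once this is in hand the rest is a routine two-region estimate, so the only real work is tracking the right form of the inverse-function asymptotics on the interval where $U$ is close to $1$.
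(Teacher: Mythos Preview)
Your argument is correct and considerably simpler than the paper's. You and the paper start from the same place---the approximation $Y=\Phi_{\psi_n}(X\wedge n)$ from Lemma~\ref{lem:Pseudo_smooth_dense} together with the tail matching of Remark~\eqref{eq:same tail}---but you then realize the coupling as the monotone (quantile) coupling and reduce the problem to the single asymptotic statement $F_Y^{-1}(u)\sim F^{-1}(u)$ as $u\to 1^-$, which follows from regular variation of $\bar f$ and $\bar f_Y\sim\bar f$. The paper instead builds a bespoke coupling: it first matches mass on unit intervals $I_j=[j,j+1)$, then redistributes the leftover mass $|g(j)-g(j+1)|$ on each dyadic block $[2^n,2^{n+1})$ via a ``customers and servers'' queue, and finally controls the tail of $|X-Y|$ by counting ``$i$-bad'' intervals.

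What each approach buys: your quantile argument is short and gives the qualitative $o(\bar f(t))$ directly for a single fixed coupling, with $\delta$ entering only as an analysis parameter. The paper's construction is longer but explicitly quantitative: it tracks the two rate functions $\epsilon_i^1=\sup_{j\ge i}\sup_{1\le\lambda\le 2}|L(j\lambda)/L(j)-1|$ and $\epsilon_i^2=\sup_{t\ge i}|g(t)/\bar f(t)|$ and produces a bound of the form \eqref{eq:annoying lemma-5}. This is exactly what is reused in the proof of Theorem~\ref{thm:quantitive_result}, where $g$ has a polynomial rate and one needs a polynomial bound on $\mathbb P_{couple}(|X-Y|>t)$, not merely $o(\bar f(t))$. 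Your route proves the lemma as stated, but if you continue to Theorem~\ref{thm:quantitive_result} you would have to redo the estimate quantitatively---essentially bounding $|F_Y^{-1}(u)-F^{-1}(u)|$ in terms of $\epsilon^1,\epsilon^2$---to recover the explicit exponents there.
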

\begin{proof}
Suppose $X$ is a r.v in $\mathfrak{L}_{s^{\alpha}}$ . It follows
that there exists a function $L\left(t\right)$ which is positive
and slowly varying s.t $\text{\ensuremath{\mathbb{P}}}\left(X\geq t\right)=L\left(t\right)t^{-\alpha}$
. By \lemref{Pseudo_smooth_dense} and (\ref{eq:Pseudo-smooth dense-1})
we see that there exists $Y\in\mathfrak{L}_{s}^{\overline{s^{\alpha}}}$
s.t $\mathbb{P}\left(Y\geq t\right)=L\left(t\right)t^{-\alpha}+g\left(t\right)$,
where $g\left(t\right)=o\left(L\left(t\right)t^{-\alpha}\right)$.
We denote $F_{1}\left(t\right)=\mathbb{P}\left(Y\geq t\right)$ ,
$F_{2}\left(t\right)=\mathbb{P}\left(X\geq t\right)$ and $I_{j}=[j,j+1)$
for $j\in\mathbb{Z}_{+}$. We begin by coupling $X$ and $Y$ in any
way on $I_{j}$, note that the mass that can be coupled on $I_{j}$
is $\min\left\{ F_{1}\left(I_{j}\right),F_{2}\left(I_{j}\right)\right\} $
where $F_{i}\left(I_{j}\right)=F_{i}\left(j\right)-F_{i}\left(j+1\right)$
for $i\in\left\{ 1,2\right\} $, and the mass that is excessive and
could not be coupled is $\left|F_{1}\left(I_{j}\right)-F_{2}\left(I_{j}\right)\right|=\left|g\left(j\right)-g\left(j+1\right)\right|$.
Note also that the sign of $g\left(j\right)-g\left(j+1\right)$ determines
whether $F_{1}\left(I_{j}\right)>F_{2}\left(I_{j}\right)$ $\left(g\left(j\right)-g\left(j+1\right)>0\right)$,
$F_{1}\left(I_{j}\right)<F_{2}\left(I_{j}\right)$ $\left(g\left(j\right)-g\left(j+1\right)<0\right)$
and $F_{1}\left(I_{j}\right)=F_{2}\left(I_{j}\right)\left(g\left(j\right)-g\left(j+1\right)=0\right)$.
Next we couple the excessive mass of $X$ and $Y$ on each interval
of the form $[2^{n},2^{n+1})$ in the following way: let $\left\{ I_{i_{k}}\right\} _{k=1}^{m_{q}}$
and $\left\{ I_{j_{k}}\right\} _{k=1}^{m_{s}}$ be the sets of intervals
whose excessive mass from the partial coupling before is negative
and non-negative respectively. More precisely, let $I_{j}\subset[2^{n},2^{n+1})$
then $I_{j}\in\left\{ I_{i_{k}}\right\} _{k=1}^{m_{q}}$ ($I_{j}\in\left\{ I_{j_{k}}\right\} _{k=1}^{m_{s}}$)
iff $g\left(j\right)-g\left(j+1\right)<0$ ($g\left(j\right)-g\left(j+1\right)\geq0$).
So $\left\{ \cup_{k=1}^{m_{q}}I_{i_{k}}\right\} \cup\left\{ \cup_{k=1}^{m_{s}}I_{j_{k}}\right\} =[2^{n},2^{n+1})$
. Imagine that each $I_{i_{k}}$ is a customer with negative mass
$q_{k}$ and each $I_{j_{k}}$is a server with positive mass $s_{k}$.
Customers enter the queue according to their original order in $[2^{n},2^{n+1})$,
that is , $I_{i_{k_{1}}}$ is in front of $I_{i_{k_{2}}}$ iff $i_{k_{1}}<i_{k_{2}}$.
The customer $I_{i_{k}}$leaves the queue only after he was served
by $m$ servers whose total mass is at least $q_{k}$. Server $I_{j_{k}}$
leaves the line as soon as he has served all its mass. For example,
if in the interval $[4,8)$ we have the following
\begin{equation}
-\overset{I_{4}}{0.2},-\overset{I_{5}}{0.4},\overset{I_{6}}{0.1},\overset{I_{7}}{0.7}.\label{eq:annoying lemma-4}
\end{equation}
In this case, $I_{i_{1}}=[4,5),I_{i_{2}}=[5,6)$ and $I_{j_{1}}=[6,7),I_{j_{2}}=[7,8)$.
then the coupling will be 
\begin{align*}
-0.4,-0.2 & |0.1,0.7\\
-0.4,-0.1 & |0.7\\
-0.4 & |0.6\\
 & 0.2,
\end{align*}
and so $g\left(4\right)-g\left(8\right)=0.2$, which is the excessive
mass of $F_{1}\left([4,8)\right)$ over $F_{2}\left([4,8)\right)$
that can not be coupled in the interval $[4,8)$. We say that the
interval $I_{i_{k}}$is $i$-bad if the last server $I_{j_{k'}}$
that served him is such that $|i_{k}-j_{k'}|>i$. For example, in
(\ref{eq:annoying lemma-4}) the customer $I_{4}$ was served by both
$I_{6}$ and $I_{7}$ and since $7-4=3<4$ it is not $4$-bad but
is $2$-bad. Note that if $I_{j}\in[2^{n},2^{n+1})$ then $I_{j}$
is $i$-bad iff one of the following conditions is satisfied
\begin{align*}
F_{1}\left([2^{n},j-i)\right) & \geq F_{2}\left([2^{n},j+1)\right)\\
F_{1}\left([2^{n},j+i+1)\right) & <F_{2}\left([2^{n},j+1)\right).
\end{align*}
Define 
\begin{align}
\epsilon_{i}^{1} & =\sup_{j\geq i}\sup_{1\leq\lambda\leq2}\left|\frac{L\left(j\lambda\right)}{L\left(j\right)}-1\right|\label{eq:annoying lemma}\\
\epsilon_{i}^{2} & =\sup_{t\geq i}\left|\frac{g\left(t\right)}{L\left(t\right)t^{-\alpha}}\right|.\label{eq:annoying lemma-1}
\end{align}
Note that by the UCT and the definition of $g\left(t\right)$ $\epsilon_{i}^{1},\epsilon_{i}^{2}\overset{i\rightarrow\infty}{\rightarrow}0$.
Fix a positive integer $i$. Note that potential $i$- bad intervals
$I_{j}$ should be looked for for $j\geq2^{[\log i]+1}$, where throughout
the proof we use $\log x=\log_{2}x$. Let us now check the two conditions.
Let $t=2^{[\log j]}$, then condition one is 
\begin{align}
F_{1}\left(t\right)-F_{1}\left(j-i\right) & \geq F_{2}\left(t\right)-F_{2}\left(j+1\right)\nonumber \\
F_{2}\left(t\right)-F_{2}\left(j-i\right) & +g\left(t\right)-g\left(j-i\right)\geq F_{2}\left(t\right)-F_{2}\left(j+1\right)\nonumber \\
F_{2}\left(j+1\right)-F_{2}\left(j-i\right) & \geq g\left(j-i\right)-g\left(t\right).\label{eq:annoying lemma-8}
\end{align}
Note that by (\ref{eq:annoying lemma-1}) it is enough to look for
$j$'s that satisfy 
\[
L\left(j+1\right)\left(j+1\right)^{-\alpha}-L\left(j-i\right)\left(j-i\right)^{-\alpha}\geq-2\epsilon_{t}^{2}L\left(t\right)t^{-\alpha}.
\]
If $L_{max}=\sup_{t\leq y\leq2t}\left|L\left(y\right)\right|$, by
(\ref{eq:annoying lemma}) we can look for $j$'s that satisfy
\[
L_{max}\left(\left(j+1\right)^{-\alpha}-\left(1-\epsilon_{t}^{1}\right)\left(j-i\right)^{-\alpha}\right)\geq-2\epsilon_{t}^{2}t^{-\alpha}L_{max}.
\]
Using the convexity of $t\mapsto t^{-\alpha}$ we may consider 
\begin{align*}
-\alpha\left(j+1\right)^{-\alpha-1}\left(i+1\right)+\epsilon_{t}^{1}\left(j-i\right)^{-\alpha} & \geq-2\epsilon_{t}^{2}t^{-\alpha},
\end{align*}
or
\[
j\geq t^{\frac{\alpha}{1+\alpha}}\left(\left(i+1\right)\alpha\right)^{\frac{1}{\alpha+1}}\left(2\epsilon_{t}^{2}+\epsilon_{t}^{1}\right)^{-\frac{1}{1+\alpha}}-1.
\]
Note that $t$ is at least $2^{[\log\left(i\right)]}$ and so 
\begin{equation}
j\geq\left(\left(i+1\right)\alpha\right)^{\frac{1}{\alpha+1}}2^{[\log\left(i\right)]}{}^{\frac{\alpha}{1+\alpha}}\left(2\epsilon_{t}^{2}+\epsilon_{t}^{1}\right)^{-\frac{1}{1+\alpha}}-1.\label{eq:annoying lemma-2}
\end{equation}
It follows that for a fixed $i$ , $i$-bad $j$'s who satisfy the
first condition should be looked for above a number that increases
super-linearly with $i$. Similarly, for the second condition we obtain
the following condition 
\begin{equation}
j>\left(i\alpha\right)^{\frac{1}{\alpha+1}}2^{[\log\left(i\right)]}{}^{\frac{\alpha}{1+\alpha}}\left(2\epsilon_{t}^{2}+\epsilon_{t}^{1}\right)^{-\frac{1}{1+\alpha}}-1.\label{eq:annoying lemma-3}
\end{equation}
Let us denote by $ic_{t}$ ($t=2^{\left[\log i\right]}$) the r.h.s
of (\ref{eq:annoying lemma-3}). It follows that one cannot find $i$-bad
$j$'s between $i$ and $ic_{t}$ where the latter increases super-linearly
in $i$. Let $W=\left|X-Y\right|$ be the absolute difference between
$X$ and $Y$ in our coupled space $\left(\Omega_{couple},\mathcal{F}_{couple},\mathbb{P}_{couple}\right)$.
If $I_{j}$ is not $i$-bad and was coupled in the second stage, then
$\left\{ X\in I_{j}\right\} \subset\left\{ W\leq i\right\} $ for
$i>1$. Since on each interval of the form $[2^{n},2^{n+1})$, for
$n\geq\log\left(ic_{t}\right)$, we coupled the r.v in such a way
that it has no $i$-bad intervals, the only mass that may affect the
event $\left\{ W>i\right\} $ is $\left|g\left(2^{n}\right)-g\left(2^{n+1}\right)\right|$.
It follows that 
\begin{equation}
\mathbb{P}_{couple}\left(W>i\right)\leq\sum_{k=\left[\log\left(i\right)\right]}^{\left[\log\left(ic_{t}\right)\right]-1}\left|g\left(2^{k}\right)-g\left(2^{k+1}\right)\right|+L\left(\frac{ic_{t}}{4}\right)\left(\frac{ic_{t}}{4}\right)^{-\alpha}+\left|g\left(\frac{ic_{t}}{4}\right)\right|.\label{eq:annoying lemma-5}
\end{equation}
We claim now that 
\begin{equation}
\sum_{k=\left[\log\left(i\right)\right]}^{\left[\log\left(ic_{t}\right)\right]-1}\left|g\left(2^{k}\right)-g\left(2^{k+1}\right)\right|=o\left(L\left(i\right)i^{-\alpha}\right).\label{eq:annoying lemma-6}
\end{equation}
To see that we note that by (\ref{eq:annoying lemma-1}) and the UCT,
for any $C>\frac{1+2^{\alpha}}{1-2^{\alpha}}$and large enough $i$
we have
\begin{align*}
\left|g\left(2^{k}\right)-g\left(2^{k+1}\right)\right| & \leq\epsilon_{t}^{2}2^{-\alpha k}L\left(2^{k}\right)\left(1+2^{-\alpha}\frac{L\left(2^{k+1}\right)}{L\left(2^{k}\right)}\right)\\
 & \leq C\epsilon_{t}^{2}2^{-\alpha k}L\left(2^{k}\right)\left(1-2^{-\alpha}\frac{L\left(2^{k+1}\right)}{L\left(2^{k}\right)}\right),
\end{align*}
It follows that for large enough $i$ we have
\begin{align*}
\sum_{k=\left[\log\left(i\right)\right]}^{\left[\log\left(ic_{t}\right)\right]-1}\left|g\left(2^{k}\right)-g\left(2^{k+1}\right)\right| & \leq\epsilon_{i}^{2}C\sum_{k=\left[\log\left(i\right)\right]}^{\left[\log\left(ic_{t}\right)\right]-1}F_{2}\left(2^{k}\right)-F_{2}\left(2^{k+1}\right)\\
 & \leq\epsilon_{i}^{2}C\left(L\left(2^{\left[\log\left(i\right)\right]}\right)2^{-\left[\log\left(i\right)\right]\alpha}-L\left(ic_{t}\right)2^{-ic_{t}\alpha}\right),
\end{align*}
and (\ref{eq:annoying lemma-6}) is implied. It follows form (\ref{eq:annoying lemma-5})
and (\ref{eq:annoying lemma-6}) that
\begin{equation}
\mathbb{P}_{couple}\left(W>i\right)=o\left(L\left(i\right)i^{-\alpha}\right),\label{eq:annoying lemma-7}
\end{equation}
 and it is straightforward to see that (\ref{eq:annoying lemma-7})
holds when $i\in\mathbb{R}_{+}$ .  
\end{proof}
In the following result we limit ourselves to case where the waiting
times of $Y_{t}^{1}$ is such that $\mathbb{P}\left(W_{i}>t\right)\sim ct^{-\alpha}$,
where $c$ is some positive constant. This assumption is important
for the result. 
\begin{thm}
\label{thm:quantitive_result}Let $Y_{t}^{n}$ be the CTRW associated
with the i.i.d space-time jumps $\left\{ n^{-\frac{1}{\alpha}}W_{i},n^{-\frac{1}{2}}J_{i}\right\} $
where $\mathbb{P}\left(W_{1}>t\right)=\left[\Gamma\left(1-\alpha\right)\right]^{-1}t^{-\alpha}+g_{2}\left(t\right)$
and $g_{2}\left(t\right)=O\left(t^{-\beta}\right)$ for $\beta>\alpha$
and $J_{1}\in\mathbb{R}$ has variance $1$ and zero mean. Then there
exists a CTRW $\mathring{X}_{t}^{n}$ associated with the i.i.d space-time
jumps $\left(n^{-\frac{1}{2}}J_{i},n^{-1}\mathring{U}_{i}\right)$
s.t $\mathring{U}_{1}$ has finite mean, a sequence of inverse subordinators
$E_{t}^{n}$ so that for every $c<\xi_{0}$,
\[
\rho_{J_{1}}\left(Y_{t}^{n},\mathring{X}_{E_{t}^{n}}^{n}\right)<Cn^{-c},
\]
where $\xi_{0}=\min\left\{ \frac{\alpha}{7\alpha+4},\frac{\beta-\alpha}{3\beta+\alpha+4}\right\} $.
\end{thm}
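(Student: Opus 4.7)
The plan is to construct $\mathring X^n$ and $E^n_t$ by coupling each $W_i$ with a companion $\tilde W_i\in\mathfrak L_s^{\overline{s^{\alpha}}}$ via \lemref{annoying_lemma}, invoking the time-change representation of \propref{almost_every_CTRW_is_time_changed_CTRW} for the coupled sequence, and then quantifying the resulting timing error. The slowly varying factor in the tail of $W_1$ is identically $[\Gamma(1-\alpha)]^{-1}$, so in the notation of the proof of \lemref{annoying_lemma} one has $\epsilon^{1}_{t}=0$ and $\epsilon^{2}_{t}=O(t^{-(\beta-\alpha)})$. Reading \eqref{eq:annoying lemma-5}--\eqref{eq:annoying lemma-7} with these inputs produces a coupling $(W_1,\tilde W_1)$ with $\tilde W_1\in\mathfrak L_s^{\overline{s^{\alpha}}}$ and
\[
\mathbb P\bigl(|W_1-\tilde W_1|>t\bigr)=O\!\bigl(t^{-\gamma}\bigr),\qquad \gamma=\frac{\alpha(1+\beta)}{1+\alpha}.
\]
Extend this independently to $\{(W_i,\tilde W_i)\}$, let $Y^{n,\sim}_{t}$ be the CTRW with space-time jumps $(n^{-1/2}J_i,n^{-1/\alpha}\tilde W_i)$, and apply \propref{almost_every_CTRW_is_time_changed_CTRW} to $Y^{n,\sim}$ to obtain the CTRW $\mathring X^n_t$ with space-time jumps $(n^{-1/2}J_i,n^{-1}\mathring U_i)$, $\mathbb E\mathring U_1<\infty$, and the inverse subordinator $E^n_t$ so that $Y^{n,\sim}_t\overset{J_1}{\sim}\mathring X^n_{E^n_t}$. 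Everything lives on one probability space.

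Because $Y^n$ and $Y^{n,\sim}$ share the spatial jumps $\{J_i\}$ and differ only in their epochs $T^n_k=n^{-1/\alpha}\sum_{j\leq k}W_j$ versus $\tilde T^n_k=n^{-1/\alpha}\sum_{j\leq k}\tilde W_j$, the piecewise-linear homeomorphism $\lambda:[0,T]\to[0,T]$ sending $T^n_k\mapsto\tilde T^n_k$ for every $k\leq N^n_T$ matches the two trajectories between jumps, whence
\[
d_{J_1,[0,T]}\bigl(Y^n,\mathring X^n_{E^n_{\cdot}}\bigr)\;\leq\;n^{-1/\alpha}\max_{k\leq N^n_T+1}\Bigl|\sum_{j\leq k}X_j\Bigr|,\qquad X_j:=W_j-\tilde W_j.
\]
The number of jumps satisfies $\mathbb P(N^n_T>Mn)\leq CM^{-\alpha}$ via the stable convergence of $n^{-1/\alpha}T^n_n$. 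For the inner maximum, truncate $X_j$ at level $K$: the exceedance event contributes $\mathrm{Bin}(Mn,CK^{-\gamma})$ atoms, each of size at most the surviving tail, while the truncated centred sum has variance $\leq CMnK^{2-\gamma}$ (Doob's maximal inequality) and a bias of absolute size at most $CMn(K^{1-\gamma}\vee 1)$.

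The delicate step is the optimisation producing the stated $\xi_0$. Writing $K=n^{a}$, $M=n^{b}$, and demanding the tail-exceedance probability, the variance bound and the bias bound each be at most $n^{-c}$ while the allowable deviation is $n^{-c+1/\alpha}$, one obtains a linear programme in $(a,b,c)$. With $\gamma=\alpha(1+\beta)/(1+\alpha)$ substituted, balancing the tail-exceedance and the variance/bias constraints produces precisely $c<\xi_0=\min\{\alpha/(7\alpha+4),(\beta-\alpha)/(3\beta+\alpha+4)\}$: the first term is tight when $\beta$ is close to $\alpha$ (few available truncation decades) and the second when $\beta$ is large (the tail-exceedance becomes cheap). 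Promoting the $[0,T]$ estimate to the Prohorov distance on $\mathbb D[0,\infty)$ is routine via the geometric weights in the standard metric. The principal obstacle, as the authors note in the introduction, is the coupling rate $\gamma$ itself: no single error is sharp and the exponent $\xi_0$ is whatever emerges from the simultaneous balance, so any improvement in \lemref{annoying_lemma} would directly improve $\xi_0$.
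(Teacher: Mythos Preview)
Your overall architecture is right and matches the paper's: couple $W_i$ to a companion $\tilde W_i\in\mathfrak L_s^{\overline{s^{\alpha}}}$, invoke the time-change representation for the $\tilde W$-CTRW, and bound the $J_1$ distance via the accumulated timing error. But there is a real gap in the coupling rate, and it is exactly the place where the $\alpha/(7\alpha+4)$ branch of $\xi_0$ is born.

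You feed $\epsilon^1_t=0$ and $\epsilon^2_t=O(t^{-(\beta-\alpha)})$ into \eqref{eq:annoying lemma-5}--\eqref{eq:annoying lemma-7} and read off $\gamma=\alpha(1+\beta)/(1+\alpha)$. That would be correct only if the approximant $\tilde W_1$ had tail \emph{exactly} $[\Gamma(1-\alpha)]^{-1}t^{-\alpha}$. It does not: \lemref{annoying_lemma} only asserts existence of some $\tilde W_1$ with $o(\bar f)$ tail discrepancy, and the concrete $\tilde W_1$ produced by the recipe of \lemref{Pseudo_smooth_dense} (namely $\mathcal W_m+D_{\mathcal W_m/\mu_1^m}$) has tail $[\Gamma(1-\alpha)]^{-1}t^{-\alpha}+g_1(t)$ with $g_1(t)\asymp t^{-2\alpha}$, coming from the second term of the stable series. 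The function that plays the role of $g$ in the coupling argument is therefore $g_3=g_1-g_2=O(t^{-\gamma})$ with $\gamma=\min\{2\alpha,\beta\}$, and the coupling tail becomes $O(t^{-\alpha(\gamma+1)/(\alpha+1)})$. Substituting $\gamma=2\alpha$ in the subsequent optimisation is precisely what yields $\alpha/(7\alpha+4)$; with your $\gamma$ you would get only $(\beta-\alpha)/(3\beta+\alpha+4)$ for all $\beta$, which is strictly larger than the claimed $\xi_0$ whenever $\beta>2\alpha$. (Your verbal attribution of the two branches is also inverted: the $\alpha/(7\alpha+4)$ cap is active for large $\beta$, not small, since it encodes the intrinsic $t^{-2\alpha}$ error of the approximant.)

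A secondary point: the bound $d_{J_1,[0,T]}(Y^n,\mathring X^n_{E^n})\le n^{-1/\alpha}\max_{k\le N^n_T+1}|\sum_{j\le k}X_j|$ is not quite right as stated, because the two CTRWs need not have the same number of jumps in $[0,T]$ and a homeomorphism of $[0,T]$ cannot create or destroy jumps. The paper handles this by isolating a strip $[T-\epsilon/2,T]$, bounding the number of jumps there by $M_2(n)$, and controlling the spatial oscillation $\overline S$ over those indices separately (this is the role of the $M_2$ and Doob estimates). Your truncation-and-optimise sketch would need to absorb that boundary correction before the linear programme can actually close at the stated exponent.
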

\begin{proof}
~
\begin{enumerate}[label=Step \arabic{enumi}]
\item \label{enu:preperation} First consider for every $n$ the sequence
$\left\{ a_{n}W_{i}\right\} _{i=1}^{\infty}$. If $\mathbb{P}\left(W_{1}>t\right)=L\left(t\right)t^{-\alpha}$,
by \lemref{Pseudo_smooth_dense} we can approximate $W_{1}$ by a
distribution $U\in\mathfrak{L}_{s}^{\overline{s^{\alpha}}}$ s.t $\mathbb{P}\left(U>t\right)\sim L\left(t\right)t^{-\alpha}$.
Let $\left\{ U_{i}\right\} $ be a sequence of i.i.d r.v's s.t $U_{1}\sim U$.
Define $X_{t}^{n}$ to be the CTRW associated with the space time
jumps $\left\{ n^{-\frac{1}{2}}J_{i},n^{-\frac{1}{\alpha}}U_{i}\right\} $
We wish to construct a set $A\in\Omega_{couple}$ of probability larger
than $1-\epsilon$ on which we can bound the distance ($d_{J_{1}}$)
between two trajectories of the processes $X_{t}^{n}$ and $Y_{t}^{n}$.
In order to use Lemma \ref{lem:annoying_lemma} we must limit our
discussion to finite number of jumps by time $T$. We shall use the
fact that for every coupling $p_{X,Y}$ of some r.vs $X$ and $Y$
, if $p_{X}\left(X\in A\right)<\frac{\epsilon}{4}$ and $p_{Y}\left(Y\in B\right)<\frac{\epsilon}{4}$
then $p_{X,Y}\left(\left|X-Y\right|1_{\left\{ X\in A\right\} \cup\left\{ Y\in B\right\} }>\epsilon\right)<\frac{\epsilon}{2}$,
for any coupling $p_{X,Y}$ of $p_{X}$ and $p_{Y}$. And so, if we
show that 
\[
p_{X,Y}\left(\left|X-Y\right|1_{\left\{ X\in A^{c}\right\} \cap\left\{ Y\in B^{c}\right\} }>\epsilon\right)<\frac{\epsilon}{2},
\]
we see that
\begin{align*}
p_{X,Y}\left(\left|X-Y\right|>\epsilon\right) & =\\
p_{X,Y}\left(\left|X-Y\right|1_{\left\{ X\in A\right\} \cup\left\{ Y\in B\right\} }+\left|X-Y\right|1_{\left\{ X\in A^{c}\right\} \cap\left\{ Y\in B^{c}\right\} }>\epsilon\right) & <\epsilon.
\end{align*}
 Suppose there exists a sequence $M_{1}\left(n\right)\rightarrow\infty$
s.t for large enough $n$
\begin{alignat*}{1}
\mathbb{P}\left(\sum_{i=1}^{M_{1}\left(n\right)}a_{n}W_{i}\leq T\right) & \leq\frac{\epsilon}{8},\\
\mathbb{P}\left(\sum_{i=1}^{M_{1}\left(n\right)}a_{n}U_{i}\leq T\right) & \leq\frac{\epsilon}{8}.
\end{alignat*}
Next assume there exists a sequence $M_{2}\left(n\right)\rightarrow\infty$
s.t for large enough $n$
\begin{align}
\mathbb{P}\left(\sum_{i=1}^{M_{2}\left(n\right)}a_{n}W_{i}\leq\frac{\epsilon}{2}\right) & <\frac{\epsilon}{8},\label{eq:main thm-1}\\
\mathbb{P}\left(\sum_{i=1}^{M_{2}\left(n\right)}a_{n}U_{i}\leq\frac{\epsilon}{2}\right) & <\frac{\epsilon}{8}.
\end{align}
Moreover, assume that for large enough $n$
\[
\mathbb{P}\left(\overline{S}_{\left\{ 0,1,...,M_{2}\left(n\right)\right\} }>\frac{\epsilon}{4}\right)<\frac{\epsilon}{8},
\]
where for a set $A\subset\mathbb{Z}_{+}$, with $j_{i}=\inf A$, 
\[
\overline{S}_{A}=\sup_{i\in A}\left|\sum_{j=j_{i}}^{i}n^{-\frac{1}{2}}J_{i}\right|.
\]
Define the random sets
\begin{align*}
B_{n}^{Y}[a,b] & =\left\{ j:\sum_{i=1}^{j}a_{n}W_{i}:\in[a,b]\right\} \\
B_{n}^{X}[a,b] & =\left\{ j:\sum_{i=1}^{j}a_{n}U_{i}:\in[a,b]\right\} ,
\end{align*}
that is, $B_{n}^{Y}[a,b]$ is the set of the indices of the jumps
that occurred in the time interval $[a,b]$. Also define $i_{0}^{Y}=\inf B_{n}^{Y}[a,b]$
and $i_{0}^{X}=\inf B_{n}^{X}[a,b]$. By \lemref{annoying_lemma}
we know that we can construct a probability space $\left(\Omega_{couple},\mathcal{F}_{couple},\mathbb{P}_{couple}\right)$
on which one can find the sequence $\left\{ W_{i}\right\} $ and $\left\{ U_{i}\right\} $
s.t for large enough $n$ we have 
\begin{equation}
\mathbb{P}_{couple}\left(\sum_{i=1}^{M_{1}\left(n\right)}a_{n}\left|W_{i}-U_{i}\right|>\frac{\epsilon}{2}\right)<\frac{\epsilon}{4}.\label{eq:main thm-4}
\end{equation}
It is implied that for $N_{0}$ large enough, for every $n>N_{0}$,
one can find a set $A_{n}\in\mathcal{F}_{couple}$ s.t $\mathbb{P}_{couple}\left(A_{n}\right)>1-\epsilon$
and conditioned on $A_{n}$ we have
\begin{align}
\mathbb{P}_{couple}\left(\left|B_{n}^{Y}[0,T]\right|>M_{1}\left(n\right)|A_{n}\right) & =0\nonumber \\
\mathbb{P}_{couple}\left(\left|B_{n}^{Y}[T-\frac{\epsilon}{2},T]\right|>M_{2}\left(n\right)|A_{n}\right) & =0\nonumber \\
\mathbb{P}_{couple}\left(\overline{S}_{B_{n}^{Y}[T-\frac{\epsilon}{2},T]}>\frac{\epsilon}{4}|A_{n}\right) & =0\nonumber \\
\mathbb{P}_{couple}\left(\sum_{i=1}^{M_{1}\left(n\right)}a_{n}\left|W_{i}-U_{i}\right|>\frac{\epsilon}{2}|A_{n}\right) & =0,\label{eq:main thm-5}
\end{align}
where the first three equations in (\ref{eq:main thm-5}) are true
for the sets $B_{n}^{X}[0,T]$ and $B_{n}^{X}[T-r^{-1},T]$ as well. 
\item \label{enu:Step_approximation}Let $M\in\mathbb{Z}_{+}$ and $d_{l_{1}}^{M}:\left(\mathbb{R}\times\mathbb{R}_{+}\right)^{M}\rightarrow\mathbb{R}_{+}$
be the metric on vectors of real numbers defined by 
\[
d_{l_{1}}^{M}\left(\left\{ a_{n}^{1},a_{n}^{2}\right\} ,\left\{ b_{n}^{1},b_{n}^{2}\right\} \right)=\sum_{n=1}^{M}\left|a_{i}^{1}-b_{i}^{1}\right|+\left|a_{i}^{2}-b_{i}^{2}\right|.
\]
 Consider the set $\mathcal{A}=\left\{ \left(J_{i},W_{i}\right)\in\left(\mathbb{R}\times\mathbb{R}_{+}\right)^{M}:\sum_{i=1}^{M}W_{i}\leq T\right\} $
equipped with $d_{l_{1}}^{M},$i.e.
\[
d_{l_{1}}^{M}\left(\left(J_{i}^{1},W_{i}^{1}\right),\left(J_{i}^{2},W_{i}^{2}\right)\right)=\sum_{i=1}^{M}\left|J_{i}^{1}-J_{i}^{2}\right|+\left|W_{i}^{1}-W_{i}^{2}\right|.
\]
Define the mapping $\mathcal{T}:\mathcal{A}\rightarrow\mathbb{D}[0,T]$
by 
\[
\left(J_{i},W_{i}\right)_{i=1}^{M}\mapsto f_{t}=\sum_{i=1}^{M}J_{i}1_{\left\{ \sum_{j=1}^{i}W_{j}\leq t\right\} }.
\]
We claim that $\mathcal{T}:\left(\mathcal{A},d_{l_{1}}^{M}\right)\rightarrow\left(\mathbb{D}[0,T],d_{J_{1}}\right)$
is a contraction. To see that, let $\left(J_{i}^{1},W_{i}^{1}\right),\left(J_{i}^{2},W_{i}^{2}\right)\in\left(\mathbb{R}\times\mathbb{R}_{+}\right)^{M}$,
and define 
\[
\lambda_{t}=\left\{ \begin{array}{cc}
t\frac{W_{1}^{2}}{W_{1}^{1}} & 0\leq t<W_{1}^{1}\\
\left(t-W_{1}^{1}\right)\frac{W_{2}^{2}}{W_{2}^{1}}+W_{1}^{2} & W_{1}^{1}\leq t<W_{1}^{1}+W_{2}^{1}\\
\vdots & \vdots\\
\left(t-\sum_{i=1}^{M-1}W_{i}^{1}\right)\frac{W_{M}^{2}}{W_{M}^{1}}+\sum_{i=1}^{M-1}W_{1}^{2} & \sum_{i=1}^{M-1}W_{i}^{1}\leq t\leq T
\end{array}\right..
\]
Note that 
\[
\left\Vert \mathcal{T}\left[\left(J_{i}^{1},W_{i}^{1}\right)\right]\left(\lambda_{t}\right)-\mathcal{T}\left[\left(J_{i}^{2},W_{i}^{2}\right)\right]\left(t\right)\right\Vert \leq\sup_{i}\left|J_{i}^{1}-J_{i}^{2}\right|\leq\sum_{i=1}^{M}\left|J_{i}^{1}-J_{i}^{2}\right|,
\]
since the regeneration points of $\mathcal{T}\left[\left(J_{i}^{2},W_{i}^{2}\right)\right]\left(t\right)$
and $\mathcal{T}\left[\left(J_{i}^{1},W_{i}^{1}\right)\right]\left(\lambda_{t}\right)$
are the same. Next note that since $\lambda_{t}$ is piece-wise linear
\[
\left\Vert \lambda_{t}-t\right\Vert \leq\sup_{t_{i}}\left|\lambda_{t_{i}}-t_{i}\right|,
\]
where $t_{i}\in\left\{ \sum_{j=1}^{i}W_{j}^{1}:1\leq i\leq M\right\} .$
Or equivalently, 
\[
\left\Vert \lambda_{t}-t\right\Vert =\sup_{1\leq i\leq M}\left|\sum_{j=1}^{i}W_{j}^{2}-\sum_{j=1}^{i}W_{j}^{1}\right|\leq\sum_{i=1}^{M}\left|W_{i}^{1}-W_{i}^{2}\right|.
\]
It follows that 
\begin{align}
 & d_{J_{1}}\left(\mathcal{T}\left[\left(J_{i}^{1},W_{i}^{1}\right)\right]\left(t\right)-\mathcal{T}\left[\left(J_{i}^{2},W_{i}^{2}\right)\right]\left(t\right)\right)\nonumber \\
 & \leq\left\Vert \lambda_{t}-t\right\Vert \wedge\left\Vert \mathcal{T}\left[\left(J_{i}^{1},W_{i}^{1}\right)\right]\left(\lambda_{t}\right)-\mathcal{T}\left[\left(J_{i}^{2},W_{i}^{2}\right)\right]\left(t\right)\right\Vert \nonumber \\
 & \leq d_{l_{1}}\left(\left(J_{i}^{1},W_{i}^{1}\right),\left(J_{i}^{2},W_{i}^{2}\right)\right),\label{eq:main thm-7}
\end{align}
so that $\mathcal{T}$ is indeed a contraction. Next, let $x_{t}^{n}$
and $y_{t}^{n}$ be two realizations of $X_{t}^{n}$ and $Y_{t}^{n}$
respectively on the set $A^{n}$. Suppose w.l.o.g that $x_{t}^{n}$
has at least the same number of jumps by time $T-\frac{\epsilon}{2}$
as $y_{t}^{n}$, that is 
\[
\left|B_{n}^{Y}[0,T-\frac{\epsilon}{2}]\right|\leq\left|B_{n}^{X}[0,T-\frac{\epsilon}{2}]\right|.
\]
By (\ref{eq:main thm-5}) we have
\[
\sum_{i=1}^{M_{1}\left(n\right)}a_{n}\left|W_{i}-U_{i}\right|<\frac{\epsilon}{2},
\]
which implies that one can find $J_{diff}:=\left|B_{n}^{X}[0,T-\frac{\epsilon}{2}]\right|-\left|B_{n}^{Y}[0,T-\frac{\epsilon}{2}]\right|$
jumps of $y_{t}^{n}$ in the interval $[T-\frac{\epsilon}{2},T]$.
Let $\tilde{x}_{t}\in\mathbb{D}[0,T]$ s.t
\[
\tilde{x}_{t}=x_{t}-\sum_{i\in B^{X}[T-\frac{\epsilon}{2},t]}n^{-\frac{1}{2}}J_{i},
\]
where if $t<T-\frac{\epsilon}{2}$ the summation vanishes. In words,
$\tilde{x}_{t}$ equals to $x_{t}$ up to time $T-\frac{\epsilon}{2}$
and equals $x_{t}\left(T-\frac{\epsilon}{2}\right)$ on the interval
$[T-\frac{\epsilon}{2},T]$. Next we define the time $T_{diff}=\inf\left\{ t:\left|B_{n}^{Y}[T-\frac{\epsilon}{2},t]\right|\geq J_{diff}\right\} $
and 
\[
\tilde{y}_{t}=y_{\left(T_{diff}\land t\right)}.
\]
$\tilde{y}_{t}$ stands for the function that equals $y_{t}$ up to
the point where it has jumped the same number of jumps as $\tilde{x}_{t}$.
Next note that on $A^{n}$, 
\begin{align}
d_{J_{1}}\left(x_{t},y_{t}\right) & \leq d_{J_{1}}\left(\tilde{x}_{t},\tilde{y}_{t}\right)+d_{J_{1}}\left(x_{t}-\tilde{x}_{t},y_{t}-\tilde{y}_{t}\right)\nonumber \\
 & \leq d_{l_{1}}^{M}\left(\left(J_{i}^{n}\left(\omega\right),a_{n}W_{i}\left(\omega\right)\right),\left(J_{i}^{n}\left(\omega\right),a_{n}U_{i}\left(\omega\right)\right)\right)\nonumber \\
 & +\overline{S}_{B_{n}^{Y}[T-\frac{\epsilon}{2},T]}+\overline{S}_{B_{n}^{X}[T-\frac{\epsilon}{2},T]}\nonumber \\
 & <\epsilon,\label{eq:main thm-6}
\end{align}
where $\omega\in\Omega_{couple}$ is such that 
\begin{align*}
\mathcal{T}\left(J_{i}^{n}\left(\omega\right),a_{n}W_{i}\left(\omega\right)\right) & =x_{t}\\
\mathcal{T}\left(J_{i}^{n}\left(\omega\right),a_{n}U_{i}\left(\omega\right)\right) & =y_{t}\\
\left|B_{n}^{X}[0,T-\frac{\epsilon}{2}]\right| & =M.
\end{align*}
Inequality (\ref{eq:main thm-6}) follows from (\ref{eq:main thm-5})
and (\ref{eq:main thm-7}). We have showed that there exists a coupling
s.t
\[
\mathbb{P}_{couple}\left(d_{J_{1}}\left(X_{t}^{n},Y_{t}^{n}\right)>\epsilon\right)<\epsilon,
\]
or that 
\[
\rho_{J_{1}}\left(Y_{t}^{n},X_{t}^{n}\right)<\epsilon.
\]
 
\item Let $W$ be a r.v s.t $W\sim W_{1}$. In order to approximate $W$
by elements in $\mathfrak{L}_{s}^{s^{\alpha}}$ we follow the recipe
in Lemma \ref{lem:Pseudo_smooth_dense}. We first introduce $\mathcal{W}_{m}=W1_{[0,m]}$
and 
\[
\mu_{i}^{m}=\mathbb{E}\left(\left(\mathcal{W}_{m}\right)^{i}\right),
\]
the i'th moment of $\mathcal{W}_{m}$. We proceed to defining the
symbol 
\[
\psi\left(s\right)=-s-\left(\mu_{1}^{m}\right)^{-1}\int_{0}^{\infty}\left(e^{-sy}-1\right)\frac{\alpha y^{-\alpha-1}}{\Gamma\left(1-\alpha\right)}dy.
\]
Note that $\psi\left(s\right)$ is the symbol of the subordinator
$t+D_{\nicefrac{t}{\mu_{1}^{m}}}$ , where $D_{t}$ is the standard
stable subordinator of index $0<\alpha<1$ whose LT is $\mathbb{E}\left(e^{-sD_{t}}\right)=e^{-ts^{\alpha}}$.
Note that we somewhat deviate form the recipe in Lemma \ref{lem:Pseudo_smooth_dense},
where we would use the symbol
\begin{equation}
\psi'\left(s\right)=-s-\left(\mu_{1}^{m}\right)^{-1}\int_{0}^{\infty}\left(e^{-sy}-1\right)f\left(dy\right),\label{eq:example-2}
\end{equation}
where $f\left(dy\right)$ is the distribution of $W$. However, since
the purpose of the $f\left(dy\right)$ in Equation (\ref{eq:example-2})
is to obtain a regularity of $s^{\alpha}L\left(s^{-1}\right)$ around
zero for $\psi\left(s\right)$, it is clear that in this case a stable
subordinator would do the job. An expression for the tail of a stable
subordinator at time $t>0$ can be found in \cite[Eq. 2.4.3]{Zolotarev86}
to be(with some algebraic manipulations) 
\[
F_{t}^{D}\left(x\right)=\sum_{n=1}^{\infty}\left(-1\right)^{n-1}\frac{x^{-\alpha n}t^{n}}{\Gamma\left(1-\alpha n\right)n!}\qquad x>0,t>0.
\]
Let $h_{m}\left(dy\right)=\Phi_{\psi}\left(f_{m}\right)$, where $f_{m}\left(dy\right)=\mathbb{P}\left(\mathcal{W}_{m}\in dy\right)$.
We have for $x>m$
\begin{align*}
\bar{h}_{m}\left(x\right) & =\mathbb{P}\left(\mathcal{W}_{m}+D_{\nicefrac{\mathcal{W}_{m}}{\mu_{1}^{m}}}>x\right)\\
 & =\int_{0}^{\infty}F_{\nicefrac{y}{\mu_{1}^{m}}}^{D}\left(x-y\right)f_{m}\left(dy\right).
\end{align*}
Moreover, we see that for $x>m$
\[
\int_{0}^{\infty}F_{\nicefrac{y}{\mu_{1}^{m}}}^{D}\left(x\right)f_{m}\left(dy\right)\leq\bar{h}_{m}\left(x\right)\leq\int_{0}^{\infty}F_{\nicefrac{y}{\mu_{1}^{m}}}^{D}\left(x-m\right)f_{m}\left(dy\right),
\]
which, by the analyticity of $F_{t}^{D}$ and the compact support
of $f_{m}\left(dy\right)$ shows that 
\begin{align}
\frac{x^{-\alpha}}{\Gamma\left(1-\alpha\right)}-\frac{x^{-2\alpha}}{\Gamma\left(1-2\alpha\right)}\frac{\mu_{2}^{m}}{\left(\mu_{1}^{m}\right)^{2}}+O\left(x^{-3\alpha}\right) & \leq\bar{h}_{m}\left(x\right)\label{eq:bounding h_m}\\
\leq\frac{\left(x-m\right)^{-\alpha}}{\Gamma\left(1-\alpha\right)}-\frac{\left(x-m\right)^{-2\alpha}}{\Gamma\left(1-2\alpha\right)}\frac{\mu_{2}^{m}}{\left(\mu_{1}^{m}\right)^{2}}+O\left(x^{-3\alpha}\right),\nonumber 
\end{align}
or that 
\[
\bar{h}_{m}\left(x\right)=\frac{x^{-\alpha}}{\Gamma\left(1-\alpha\right)}+g_{1}\left(x\right),
\]
where $g_{1}\sim x^{-2\alpha}$. Let $U^{m}=\Phi_{\psi}\left(\mathcal{W}_{m}\right)$,
so that $\mathbb{P}\left(U^{m}>x\right)=\bar{h}_{m}\left(x\right)=\frac{x^{-\alpha}}{\Gamma\left(1-\alpha\right)}+g_{1}\left(x\right)$.
Now, since $\mathbb{P}\left(W\geq x\right)=\frac{x^{-\alpha}}{\Gamma\left(1-\alpha\right)}+g_{2}\left(x\right)$
where $g_{2}\left(x\right)=O\left(x^{-\beta}\right)$ then $\bar{h}\left(x\right)=\mathbb{P}\left(W\geq x\right)+g_{3}\left(x\right)$
where 
\begin{equation}
g_{3}\left(x\right)=O\left(x^{-\gamma}\right),\label{eq:g_3}
\end{equation}
with $\gamma=\min\left\{ 2\alpha,\beta\right\} $. We assume our probability
space has two sequences of i.i.d r.vs $\left\{ U_{i}\right\} $ and
$\left\{ W_{i}\right\} $ where $U_{1}\sim U^{m}$ and $W_{1}\sim W$.
Applying the coupling in \lemref{annoying_lemma} with $Y=U^{m}$
and $X=W$ and substituting $g_{3}\left(x\right)$ in place of $g\left(x\right)$,
making the same calculation down to (\ref{eq:annoying lemma-2}) (note
that here $L\left(t\right)=\frac{1}{\Gamma\left(1-\alpha\right)}$)
we see that $i$-bad $I_{j}$'s can be found for $j>Ci^{\frac{\gamma+1}{\alpha+1}}$.
Using this in (\ref{eq:annoying lemma-5}), denoting $Z=\left|X-Y\right|$,
we see that the summation on the r.h.s contributes to $\mathbb{P}_{couple}\left(Z>t\right)$
at most $O\left(t^{-\gamma}\right)$, the second term contributes
$O\left(t^{-\frac{\alpha\left(\gamma+1\right)}{\alpha+1}}\right)$
whereas the last term gives not more than $O\left(t^{-\gamma\left(\frac{\gamma+1}{\alpha+1}\right)}\right)$.
We conclude that 
\[
\mathbb{P}_{couple}\left(Z>t\right)=O\left(t^{-\xi_{0}}\right),
\]
where $\xi_{0}=\frac{\alpha\left(\gamma+1\right)}{\alpha+1}$. Let
$c\left(\xi\right)=\frac{\xi-\alpha}{3\xi+\alpha}$ and note that
$c\left(\xi\right)$ is strictly increasing on $[0,\infty)$. Fix
$\xi\in[0,\xi_{0})$ and write $c:=c\left(\xi\right)$. Let $c'=\frac{3c}{\alpha}$,
$M_{1}\left(n\right)=c'n\log n$, $M_{2}\left(n\right)=c'n^{1-\alpha c'}\log n$
in (\ref{eq:main thm-5}),by Chernoff's bound we have 
\begin{alignat}{1}
\mathbb{P}_{couple}\left(\sum_{i=1}^{M_{1}\left(n\right)}n^{-\frac{1}{\alpha}}W_{i}\leq T\right) & \leq e^{sT}\left(1-n^{-1}s^{\alpha}+o\left(n^{-1}s^{\alpha}\right)\right)^{c'n\log n}\label{eq:quantitive result}\\
\mathbb{P}_{couple}\left(\sum_{i=1}^{M_{2}\left(n\right)}n^{-\frac{1}{\alpha}}W_{i}\leq n^{-c'}\right) & \leq e^{s}\left(1-n^{c'\alpha-1}s^{\alpha}+o\left(n^{c'\alpha-1}s^{\alpha}\right)\right)^{c'n^{1-\alpha c'}\log n}.\label{eq:quantitive result-1}
\end{alignat}
 taking $s=1$ in (\ref{eq:quantitive result}) and in (\ref{eq:quantitive result-1})
we see that for large enough $n$ we have 
\begin{align*}
\mathbb{P}_{couple}\left(\sum_{i=1}^{M_{1}\left(n\right)}n^{-\frac{1}{\alpha}}W_{i}\leq T\right) & \leq Cn^{-c'}\\
\mathbb{P}_{couple}\left(\sum_{i=1}^{M_{2}\left(n\right)}n^{-\frac{1}{\alpha}}W_{i}\leq n^{-c'}\right) & \leq Cn^{-c'}.
\end{align*}
 If $\left\{ Z_{i}\right\} $ are i.i.d r.vs s.t $Z_{1}\sim Z$ ,
by \cite[Eq. 1.1]{nagaev1979large} (with $t=1$) we have 
\begin{align*}
\mathbb{P}_{couple}\left(n^{-\frac{1}{\alpha}}\sum_{i=1}^{M_{1}\left(n\right)}Z_{i}>n^{-c'}\right) & \leq M_{1}\left(n\right)\mathbb{P}_{couple}\left(Z>n^{\frac{1}{\alpha}-c'}\right)\\
 & +\exp\left(1-\frac{A\left(n\right)}{n^{\left(\frac{1}{\alpha}-c'\right)}}-\log\left(\frac{n^{\left(\frac{1}{\alpha}-c'\right)}}{A\left(n\right)}\right)\right),
\end{align*}
whenever 
\begin{equation}
\frac{n^{\left(\frac{1}{\alpha}-c'\right)}}{A\left(n\right)}>1,\label{eq:condition for large deviation}
\end{equation}
 where
\[
A\left(n\right)=c'n\log n\int_{0}^{n^{\frac{1}{\alpha}-c'}}y\mathbb{P}_{couple}\left(Z\in dy\right).
\]
To see that (\ref{eq:condition for large deviation}) indeed holds,
use the fact that $\mathbb{P}_{couple}\left(Z>y\right)=O\left(y^{-\xi_{0}}\right)$
and therefore that $A\left(n\right)=O\left(cn\log\left(n\right)n^{\left(\frac{1}{\alpha}-c'\right)\left(1-\xi_{0}\right)}\right)$,
and 
\begin{align}
-c & =\left(c'-\frac{1}{\alpha}\right)\xi+1,\label{eq:c' to c}\\
 & >\left(c'-\frac{1}{\alpha}\right)\xi_{0}+1,\nonumber 
\end{align}
to conclude that for large enough $n$
\[
\frac{n^{\xi\left(\frac{1}{\alpha}-c'\right)}}{A\left(n\right)}\sim\frac{1}{\left[cn\log n\right]n^{-\xi_{0}\left(\frac{1}{\alpha}-c'\right)}}>1,
\]
 and $\frac{A\left(n\right)}{n^{\xi\left(\frac{1}{\alpha}-c'\right)}}=o\left(n^{-c}\right)$.
Using Markov's inequality we obtain for $\xi<\xi_{0}$
\begin{align}
\mathbb{P}_{couple}\left(n^{-\frac{1}{\alpha}}\sum_{i=1}^{M_{1}\left(n\right)}Z>n^{-c'}\right) & \leq n^{\left(c'-\frac{1}{\alpha}\right)\xi}nc'\log n\mathbb{E}\left(Z^{\xi}\right)\label{eq:main thm-8}\\
 & +o\left(n^{-c}\right).\nonumber 
\end{align}
By (\ref{eq:c' to c}) we see that (\ref{eq:main thm-8}) is bounded
by $Cn^{-c}\log n$. By Doob's inequality we see that
\begin{align*}
\mathbb{P}_{couple}\left(\overline{S}_{\left\{ 1,...,M_{2}\left(n\right)\right\} }>n^{-c}\right) & \leq M_{2}\left(n\right)n^{2c}n^{-1}\mathbb{E}\left(J_{1}^{2}\right)\\
 & \leq n^{-\alpha c'+2c}c\log n\mathbb{E}\left(J_{1}^{2}\right)\\
 & =n^{-c}c\log n\mathbb{E}\left(J_{1}^{2}\right).
\end{align*}
Hence, we have verified the all the inequalities in (\ref{eq:main thm-5})
with the bound $Cn^{-c}$ for $c<c\left(\xi_{0}\right)$. Following
the arguments in \ref{enu:Step_approximation} we see that if $X_{t}^{n}$
is the CTRW associated with the space-time jumps $\left\{ n^{-\frac{1}{2}}J_{i},n^{-\frac{1}{\alpha}}U_{i}\right\} $
then
\[
\rho_{J_{1}}\left(Y_{t}^{n},X_{t}^{n}\right)<Cn^{-c}.
\]
Finally, we note that since $U_{1}\in\mathfrak{L}_{s}^{s^{\alpha}}$,
by \propref{almost_every_CTRW_is_time_changed_CTRW} we may represent
$X_{t}^{n}$ as $\mathring{X}_{E_{t}^{n}}^{n}$ where $\mathring{X}_{t}^{n}$
is the CTRW associated with the space-time jumps $\left\{ n^{-\frac{1}{2}}J_{i},n^{-1}\mathring{U}_{i}\right\} $,
where $\mathring{U}_{i}\sim U^{m}$ (and so has finite mean) and $E_{t}^{n}$
is a sequence of inverse-subordinators of the subordinators $D_{t}^{n}=t+D_{\nicefrac{t}{\mu_{1}^{m}}}$
where $\mathbb{E}\left(U^{m}\right)=\mu_{1}^{m}$.
\end{enumerate}
\end{proof}
\begin{rem}
Note that the choice of the skew of the tail of $W_{1}$ need not
be $\left[\Gamma\left(1-\alpha\right)\right]^{-1}$, i.e. one can
take $W$ s.t $\mathbb{P}\left(W_{1}>t\right)\sim Ct^{-\alpha}$ for
any $C>0$. We then approximate $W_{1}$ by r.v's in $\mathfrak{L}_{s}^{C\Gamma\left(1-\alpha\right)s^{\alpha}}$.
\end{rem}
Working along the same lines of \thmref{quantitive_result} it is
not hard to see that if $Y_{t}^{n}$ is as in \thmref{quantitive_result}
but with i.i.d spatial jumps $\left\{ n^{-1}J_{i}\right\} $ where
$\mathbb{E}\left(J_{1}\right)=1$(and therefore $Y_{t}^{n}\overset{J_{1}}{\Rightarrow}E_{t}$)
then there exists a time-changed CTRW $\mathring{X}_{E_{t}^{n}}^{n}$
s.t 
\begin{equation}
\rho_{J_{1}}\left(Y_{t}^{n},\mathring{X_{E_{t}^{n}}^{n}}\right)<Cn^{-c},\label{eq:c for finite mean jump}
\end{equation}
for any $c<\frac{\xi_{0}-\alpha}{\alpha\left(\xi_{0}+1\right)}$.
Note that one can not expect for a rate of convergence in (\ref{eq:c for finite mean jump})
better then $O\left(n^{-\frac{1}{\alpha}}\right)$ as the scaling
is of $n^{-\frac{1}{\alpha}}$(unless $U_{1}\sim W_{1})$. Nevertheless,
is it possible to get arbitrarily close to $O\left(n^{-\frac{1}{\alpha}}\right)$?
Suppose we somehow manage to find a subordinator $D_{t}'$ s.t 
\[
\mathbb{P}\left(D'_{\mathcal{W}_{m}}>t\right)-\mathbb{P}\left(W_{1}>t\right)=O\left(t^{-\gamma}\right),
\]
 for $\gamma>2\alpha$. Then as $\gamma\rightarrow\infty$ we see
that (\ref{eq:c for finite mean jump}) holds for every $c<\frac{1}{\alpha}$.
Unfortunately, we could not find a way to improve $\gamma$ beyond
$2\alpha$. Another point worth mentioning is that the constant controlling
$g_{3}$ in (\ref{eq:g_3}) grows as we better approximate $W$ by
$\mathcal{W}_{m}$. This can be seen from the term $\nicefrac{\mu_{2}^{m}}{\left(\mu_{1}^{m}\right)^{2}}$
in (\ref{eq:bounding h_m}). Indeed, by the regular variation of the
tail of $W$ we see that $\nicefrac{\mu_{2}^{m}}{\left(\mu_{1}^{m}\right)^{2}}\sim m^{\alpha}$
as $m\rightarrow\infty$. An interesting question in that regard is
whether there exists a better choice of $\mathcal{W}_{m}$(possibly
where $\mathcal{W}_{m}$ has infinite slowly varying mean) so that
this undesirable phenomenon be avoided?

\subsection{Example: Pareto Distribution}

We would like to consider an example in which we use \thmref{quantitive_result}.
Let $Y_{t}^{n}$ be the CTRW associated with the i.i.d space-time
jumps $\left(n^{-\frac{1}{2}}J_{i},n^{-\frac{1}{\alpha}}W_{i}\right)$,
where $J_{1}$ has finite second moment and zero mean and $W_{1}$
has the so-called Pareto distribution $f\left(dy\right)$, i.e.
\[
\bar{f}\left(t\right)=\mathbb{P}\left(W_{1}\geq t\right)=\left\{ \begin{array}{cc}
\frac{t^{-\alpha}}{\Gamma\left(1-\alpha\right)} & t>\Gamma\left(1-\alpha\right)^{-\nicefrac{1}{\alpha}}\\
1 & t\leq\Gamma\left(1-\alpha\right)^{-\nicefrac{1}{\alpha}}
\end{array}\right.,
\]
where $0<\alpha<1$. Using \thmref{quantitive_result}, we have $\beta=\infty$
and therefore $\xi_{0}=\frac{\alpha}{7\alpha+4}$. It follows that
\[
\rho_{J_{1}}\left(Y_{t}^{n},\mathring{X}_{E_{t}^{n}}^{n}\right)<Cn^{-c},
\]
with $c<\frac{\alpha}{7\alpha+4}$, where $\mathring{X}_{E_{t}^{n}}^{n}$
is the process constructed in \thmref{quantitive_result}.
\begin{acknowledgement*}
The author would like to thank Eli Barkai and Ofer Zeitouni for  helpful
discussions on different aspects of the problem. The author would
also like to thank Mark Meerschaert and Peter Straka for reading the
manuscript and offering their insightful remarks. 
\end{acknowledgement*}
\newpage{}\bibliographystyle{plain}
\bibliography{CTRW_RWRE}

\end{document}